\newtheorem{thm}{Theorem}[section]
\newtheorem{cor}[thm]{Corollary}
\newtheorem{lem}[thm]{Lemma}
\newtheorem{prop}[thm]{Proposition}
\newtheorem{defn}[thm]{Definition}
\newtheorem{exam}[thm]{Example}
\newtheorem{rem}[thm]{Remark}
\numberwithin{equation}{section}
\DeclareMathOperator{\Der}{Der}
\def\cal{\mathcal }
\DeclareMathOperator{\codim}{codim}
\DeclareMathOperator{\diag}{diag}
\begin{document}
\title[Pro-nilpotent and pro-solvable Lie algebras]{Infinite dimensional analogues of \\ nilpotent and solvable Lie algebras}

\author{F.H. Haydarov, B.A. Omirov, G.O. Solijanova}

\address{Farhod H. Haydarov \newline \indent
V.I.Romanovskiy Institute of Mathematics,  Uzbekistan Academy of Sciences, Tashkent, Uzbekistan; \newline \indent
National University of Uzbekistan, 100174, Tashkent, Uzbekistan}
\email{{\tt haydarov\_imc@mail.ru}}

\address{Bakhrom A. Omirov \newline \indent
	Institute for Advanced Study in Mathematics, Harbin Institute of Technology, Harbin 150001 \newline \indent
	Suzhou Research Institute, Harbin Institute of Technology, Harbin  215104, Suzhou, China}
\email{{\tt omirovb@mail.ru}}

\address{Gulkhayo O. Solijanova \newline \indent
National University of Uzbekistan, 100174, Tashkent, Uzbekistan\newline \indent
V.I. Romanovskiy Institute of Mathematics, Uzbekistan Academy of Sciences, Tashkent, Uzbekistan
} \email{{\tt gulhayo.solijonova@mail.ru}}

\begin{abstract} We study infinite-dimensional analogues of nilpotent and solvable Lie algebras, focusing on the classes of pro-nilpotent, residually nilpotent, pro-solvable and residually solvable Lie algebras. We extend classical triangularization results (Engel's and Lie's theorems) to the pro-setting and establish existence results for the pro-nilpotent radical in pro-solvable algebras and in certain residually solvable algebras. We adapt finite-dimensional construction methods to produce residually solvable extensions with a given pro-nilpotent radical under natural finiteness conditions.  By analyzing derivations and maximal tori of pro-nilpotent algebras, we extend the notion of rank and show that, for pro-nilpotent algebras of maximal rank, every derivation of a maximal residually solvable extension is inner. Finally, we describe standard constructions (tensor and direct sum products, central extensions) that preserve pro-nilpotency.
\end{abstract}

\subjclass[2020]{17B30, 17B40, 17B65.}

\keywords{pro-nilpotent Lie algebra, pro-solvable Lie algebra, triangularization, residually solvable extension, rank of an algebra, maximal torus, derivation.}

\maketitle

\section*{Introduction}

Infinite-dimensional Lie algebras appear in many parts of mathematics and physics. They arise as algebras of vector fields, as symmetry algebras for differential equations, and in representation theory and conformal field theory (for example, Kac--Moody and Witt-type algebras). Unlike the finite-dimensional case, many classical results do not hold in the infinite-dimensional world.

At the present time there is no general theory of infinite-dimensional Lie algebras. There are, however, four classes of infinite-dimensional Lie algebras that underwent a more or less intensive study due to their various applications mostly in Physics. These are, first of all, the above-mentioned Lie algebras of vector fields, the second class consists of Lie algebras of smooth mappings of a given manifold into a finite-dimensional Lie algebra, the third class is the classical Lie algebras of operators in a Hilbert or Banach space and finally, the fourth class is Kac-Moody algebras.

It is known that analogues of Engel's and Lie's theorems for infinite-dimensional Lie algebras are false. Only in the 1990s E. Zelmanov solved the Burnside problem, which connects the nilpotency property of a Lie algebra with adjoint operators satisfying the $n$-Engel's condition \cite{Zelmanov}.

The study of narrow Lie algebras was initiated by Shalev and Zelmanov~\cite{Shalev,Shalev1}, who investigated positively graded Lie algebras characterized by a small width parameter~$d$. Formally, a positively graded Lie algebra $\mathfrak{g} = \bigoplus\limits_{i=1}^{\infty} \mathfrak{g}_i$ is said to be {\it finite width}, if the dimensions of all its homogeneous components $\mathfrak{g}_i$ are uniformly bounded for all $i$. The authors underscored the profound difficulty inherent in classifying graded Lie algebras of finite width, remarking that it presents a ``formidable challenge''~\cite{Shalev1}. The classification of naturally graded Lie algebras of width greater than one is a formidable challenge, one that lies beyond the reach of current methods. A crucial and more accessible subclass within this family consists of those algebras satisfying a specific narrowness criterion:
$\dim \mathfrak{g}_i + \dim \mathfrak{g}_{i+1} \leq 3$ for all $i \geq 1$  \cite{MillionshSMJ}.

Finite-dimensional Lie algebras satisfying the conditions $\dim \mathfrak{g}_{1} = 2$ and $\dim \mathfrak{g}_{i} = 1$ for all $i \geq 2$, known as filiform Lie algebras, were introduced by Vergne \cite{Vergne}. Their infinite-dimensional analogue, referred to as $\mathbb{N}$-graded Lie algebras of maximal class, was subsequently investigated by Fialowski \cite{Fialowski} and \cite{Mil2008}. Similarly, an infinite-dimensional Leibniz analogue of filiform Lie algebras was later proposed in \cite{thin}. These algebras are a particular case of the pro-nilpotent algebras. The study of pro-nilpotent and pro-solvable Lie algebras and their relation to Klein-Gordon PDE is the focus of \cite{MillionshART}. It shows that the applications related to infinite-dimensional analogues of naturally graded nilpotent Lie algebras are of special interest. For instance, the characteristic Lie algebras of some hyperbolic partial differential equations are naturally graded and if Darboux integrability of such equations implies the finite-dimensionality of characteristic Lie algebras, then integrability in the sense of the inverse scattering problem method leads to slowly growing infinite-dimensional Lie algebras. The latest progress in this direction has been obtained by Millionshchikov \cite{MillionshART}, who studied the characteristic Lie algebras of some partial hyperbolic equations and their connection with narrow naturally graded Lie algebras.


The description of pro-nilpotent Lie algebras differs from the usual
classifications since there are no simple objects, in the sense that each positively graded Lie algebra has a proper ideal. From this point of view, it will differ, for example, from the classification of simple $\mathbb{Z}$-graded Lie algebras obtained in \cite{Kac,Kac1,Mathieu}. It is also known that every pro-nilpotent Lie algebra is a special filtered deformation of some Carnot algebra.

The study of finite-dimensional solvable Lie algebras with special types of nilradicals comes from different problems in Physics (see in \cite{Snobl1} and reference therein). By analogy with solvable Lie algebras, we consider residually solvable and pro-solvable algebras, as algebras defined defined by two similar properties: the intersection of all members of the derived series is equal to zero (for residually solvable property), and the quotient algebra by any member of the derived series is finite-dimensional. A typical example of a residually solvable Lie algebra is the non-negative part of the Witt algebra whose maximal pro-nilpotent ideal is its positive part. In this case the method of constructing a solvable finite-dimensional Lie algebra by means of its nilradical, given in \cite{Mubor}, is completely consistent for these infinite-dimensional Lie algebras. The method has also been successfully implemented for other examples, including pro-solvable Lie algebras whose maximal pro-nilpotent ideals are $\mathbb{N}$-graded Lie algebras of maximal class and the positive parts of the affine Kac-Moody algebras $A_1^{(1)}, A_2^{(2)}$ (see in \cite{Residual1,Residual2})

This paper we focus our study on infinite-dimensional analogues of nilpotent and solvable Lie algebras, so-called pro-nilpotent and pro-solvable algebras, considered by D.V. Millionshchikov in \cite{Millionshdis}, \cite{MillionshART}. For these algebras, analogues of Engel's and Lie's theorems in terms of strictly triangunalizability and triangularizability are obtained.

Defining a maximal torus in a pro-nilpotent Lie algebra $\mathcal{N}$ requires special care due to infinite-dimensional phenomena. We prove that a maximal torus in $\mathcal{N}$ can be obtained as the limit of maximal tori in the finite-dimensional nilpotent quotients $\mathcal{N} / \mathcal{N}^k$. Furthermore, we show that any two maximal tori in a pro-nilpotent Lie algebra have the same dimension. This result ensures that the notion of rank is well-defined for pro-nilpotent algebras.

We also introduce the concept of residually solvable algebras and show that, under certain conditions, such algebras contain a maximal (with respect to inclusion) pro-nilpotent ideal. Lie algebras of this type are referred to as residually solvable extensions. By examining further properties of residually solvable extensions, we demonstrate that the classical method of constructing finite-dimensional solvable Lie algebras by means their nilradicals and certain types of derivations can be naturally extended to this class of residually solvable algebras with a prescribed pro-nilpotent radical. Moreover, we prove that for a maximal residually solvable extension of a pro-nilpotent Lie algebra of maximal rank with an abelian complementary subalgebra, the resulting extension is centerless and all its derivations are inner.  Finally, we present examples of constructions that generate new ``pro-algebras'' from existing ones, in some sense, closed under ``pro-properties''.

The paper is organized as follows: In Section 1 we introduce pro-nilpotent (residually nilpotent) and pro-solvable (residually solvable) Lie algebras. It is established their elementary properties, proves the closure under finite sums of ideals and the filtration topology is introduced.

Section 2 develops infinite-dimensional analogues of classical triangularization results. We extend Engel's and Lie's theorems to pro-nilpotent and pro-solvable Lie algebras, showing that their adjoint representations admit triangularizable (or strictly triangularizable) forms. In particular, a pro-nilpotent algebra has strictly triangularizable adjoint operators, while a pro-solvable algebra has triangularizable ones. We also establish structural consequences, such as the pro-nilpotency of the derived square, and describe the behavior of derivations in terms of block-triangular forms. These results highlight similarities with the finite-dimensional setting and at the same time some differences are also remarked.

In Section 3 we extend the results on maximal tori on pro-nilpotent Lie algebras. We prove that a maximal torus on a pro-nilpotent Lie algebra $\cal N$ has dimension bounded above by number of generators of $\cal N$ and dimensions of two maximal tori are coincide, which allows us to define the rank of $\cal N$. We describe maximal tori as limits of compatible systems of tori on finite-dimensional nilpotent quotients and establish root space decompositions with respect to such tori. Finally, for complete pro-nilpotent Lie algebras, we show that the exponential of a derivation converges to an automorphism.

Section 4 investigates residually solvable extensions of pro-nilpotent Lie algebras, extending the classical theory of solvable extensions in finite dimensions. We introduce the notion of the pro-nilpotent radical and prove its existence under natural restrictions in residually solvable algebras. We extend the classical construction of finite-dimensional solvable Lie algebras via their nilradicals to residually solvable extensions with a given pro-nilpotent radical. Moreover, it is established that in a maximal residually solvable extension of a pro-nilpotent algebra of maximal rank, with an abelian complementary subalgebra, all derivations are inner and the algebra is centerless. The final Section we show that some known constructions provide new pro-nilpotent (pro-solvable) Lie algebras from given ones.

Throughout the paper we shall consider only countable dimensional complex algebras and vector spaces with Hamel bases.

\section{On pro-nilpotent and pro-solvable structures}

In this section we give the concepts of pro-nilpotent (residually nilpotent), pro-solvable (residually solvable) Lie algebras and provide several examples of such algebras. Some basic results for these type of algebras are given. Moreover, the filtration topology on pro-nilpotent Lie algebra is presented as well.

For a given Lie algebra $\cal L$ we define the lower central  and derived series as follows:
$$\cal L^{k+1}=[\cal L^{k},\cal L],  \quad \cal L^{[k+1]}=[\cal L^{[k]},\cal L^{[k]}], \ k\geq 1.$$

In \cite{MillionshSMJ}, D.V. Millionschikov considered special types of infinite-dimensional Lie algebras for which the dimensions of the terms in the lower central series are strictly decreasing and never become zero. In the finite-dimensional case, a strictly decreasing dimension of the terms in the lower central series implies the nilpotency of the algebra. Applying a similar approach, we introduce the concept of a pro-solvable Lie algebra.

\begin{defn}\label{def2} Let $\cal L$ be a infinite-dimensional Lie algebra.

(i)  If $\cal L$ satisfies $\bigcap\limits_{i=1} ^{\infty}\cal L^{i}=0$ (resp. $\bigcap\limits_{i=1} ^{\infty}\cal L^{[i]}=0$) then it is called residually nilpotent (resp. residually solvable);

(ii) If residually nilpotent (resp. residually solvable) algebra satisfies $\dim (\cal L /\cal L^{i+1}) < \infty$ (resp. $\dim (\cal L /\cal L^{[i+1]}) < \infty$) for any $i\geq 1$ then it is called pro-nilpotent (resp. pro-solvable).
\end{defn}

In paper \cite{Yu}, two classes of infinite dimensional Lie algebras are considered. Namely, a residually nilpotent (resp. residually solvable) algebra $\cal L$ is called potentially nilpotent (respectively, potentially solvable), if
$\dim (\cal L^i /\cal L^{i+1}) < \infty$ (resp. $\dim (\cal L^{[i]} / \cal L^{[i+1]}) < \infty$) for any $i\geq 1$.

The vector space isomorphism
$$\cal L / \cal L^{i} \cong \cal L / \cal L^{2} \oplus \cal L^2 / \cal L^{3}\oplus \cdots \oplus \cal L^{i} / \cal L^{i+1},$$
implies that
$\dim(\cal L / \cal L^{i})=\sum\limits_{j=1}^{i} \dim (\cal L^j / \cal L^{j+1})< \infty.$
Therefore,
\begin{equation} \label{eq1}
\dim(\cal L / \cal L^{i}) < \infty \quad \forall \ i\in \mathbb{N} \quad  \Leftrightarrow \quad
\dim(\cal L^{i} / \cal L^{i+1}) < \infty \quad  \forall \  i\in \mathbb{N}.
\end{equation}

Thus, definitions of pro-nilpotency (resp. pro-solvability) and potentially nilpotency (resp. potentially solvability) are equivalent.

Below we present examples of infinite-dimensional Lie algebras considered in \cite{Fialowski} and \cite{MillionshART}, which satisfy some properties related to notions from Definition \ref{def2}.

\begin{exam}\label{examples}
\

\begin{itemize}
\item[--] The non-negative part of Witt algebra is pro-solvable and non residually nilpotent algebra (consequently, non pro-nilpotent), while positive part of Witt algebra is both pro-nilpotent and pro-solvable;

\item[--] The solvable Lie algebra $\mathfrak{a}_{\infty}$ with given on the basis $\{e_i \ | \ i\in \mathbb{Z}_{+}\}$ by non zero products:
$$[e_0, e_i] = e_{i-1}, \ i \geq 3,$$
is neither pro-solvable nor residually nilpotent;

\item[--] The following infinite-dimensional filiform Lie algebras given on the basis
$\{e_i \ | \ i\in \mathbb{N}\}$ with multiplications tables:
$$\begin{array}{llllllll}
\rm m_1: & [e_1,e_i]=e_{i+1}, &  i \geq 2; &\\[3mm]
\rm m_2: & [e_i,e_1]=e_{i+1}, &i \geq 2, & [e_2,e_j]=e_{j+2}, & j \geq 3; \\[3mm]\end{array}
$$
are pro-nilpotent and residually solvable, but non pro-solvable;
\end{itemize}
\end{exam}

We extend the notions of Definition~\ref{def2} to subalgebras and ideals of an algebra $\mathcal{L}$; a subalgebra or ideal is called {\it residually nilpotent, pro-nilpotent, etc.}, if it satisfies the corresponding conditions of Definition~\ref{def2}.

We have the following observations:
\begin{itemize}
\item[-] $Span \{e_{2k} \ | \ k\in \mathbb{N}\}$ is a residually nilpotent subalgebra (not ideal) of $\rm m_1$;

\item[-] $Span\{e_i \ | \ i\geq 2\}$ forms a subalgebra of $\rm m_1$ which is not pro-nilpotent;

\item[-] $Span\{e_1, e_i \ | \ i\geq 3\}$ is both pro-nilpotent subalgebra and pro-nilpotent ideal of $\rm m_1$.
\end{itemize}

\begin{rem}
Therefore, subalgebra (ideal) of pro-nilpotent Lie algebra is not pro-nilpotent algebra, in general. Similarly is true for pro-solvable Lie algebras. \end{rem}

Let us consider another special family of algebras.
\begin{exam} For a given $s\in \mathbb{N}\cup \{0\}$ we consider the pro-nilpotent Lie algebra
$$\cal W(s): [e_i,e_j]=(j-i)e_{i+j+s},\ i,j\geq1.$$
Note that $\cal W(0)$ is the positive part of Witt algebra and it is a $2$-cocycle of $\cal W(s)$. Since $\cal W(s)$ is a $2$-cocycle of $\cal W(0)$, we deduce that the family of pro-nilpotent Lie algebras
$$
[e_i,e_j]=(j-i)\sum\limits_{k=0}^{t}a_ke_{i+j+k},  \quad i,j\geq 1 \quad a_k\in\mathbb C.
$$
consists of infinitesimal deformations of the algebra $\cal W(0)$. It is remarkable that algebra $\cal W(s)$ is a mutant of the algebra $\cal W(0)$ realized by its $\frac{1}{2}$-derivation defined as $d(e_i)=e_{i+s}, \ i \geq 1$ (see for details on mutant and $\delta$-derivations in \cite{Fil1}).
  \end{exam}

The following results will be used frequently throughout this paper.
\begin{lem} [Closure of ideal sum] \label{lem:pro-nilpotent-sum}
\

(i) The sum of two pro-nilpotent ideals of a Lie algebra is pro-nilpotent.

(ii) The sum of two pro-solvable ideals of a Lie algebra is pro-solvable.
\end{lem}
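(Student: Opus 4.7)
The plan is to treat part (i) in detail, since (ii) then follows by the same skeleton with the derived series replacing the lower central series. Set $M = I + J$. First I would record that, since $I$ is an ideal of $\mathcal{L}$, induction with the Jacobi identity shows each $I^n$ is also an ideal of $\mathcal{L}$, and similarly for $J^n$; in particular $I^n + J^n$ is an ideal of $M$. Because $I, J\subseteq M$, the inclusions $I^n, J^n \subseteq M^n$ are immediate, so $I^n + J^n \subseteq M^n$. For the finite-codimension condition I would use the well-defined surjection
\[
I/I^n \oplus J/J^n \;\twoheadrightarrow\; M/(I^n + J^n),\qquad (\bar x,\bar y)\longmapsto \overline{x+y},
\]
whose source is finite-dimensional by the pro-nilpotency of $I$ and $J$; combined with $I^n + J^n\subseteq M^n$, this yields $\dim M/M^n<\infty$.

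For residual nilpotency, $\bigcap_k M^k=0$, I would apply the classical finite-dimensional result that the sum of two nilpotent ideals is nilpotent to the finite-dimensional quotient $\overline M_n := M/(I^n + J^n)$. Its ideals $\overline I_n$, $\overline J_n$ are nilpotent of class at most $n-1$, so $\overline M_n$ is nilpotent of class at most $2n-2$, whence $M^{2n-1}\subseteq I^n+J^n$. Thus $\{M^k\}_k$ and $\{I^n+J^n\}_n$ are cofinal filtrations of $M$, and the problem reduces to showing $\bigcap_n (I^n+J^n)=0$. Given $x$ in this intersection, write $x = a_n + b_n$ with $a_n\in I^n$, $b_n\in J^n$, and fix any decomposition $x = a + b$ with $a\in I$, $b\in J$; the differences $c_n := a - a_n = b_n - b$ lie in $I\cap J$ and satisfy $a\equiv c_n\pmod{I^n}$, $-b\equiv c_n\pmod{J^n}$. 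Combining this with the residual nilpotency of $I$, of $J$, and of $I\cap J$ (which holds because $(I\cap J)^k\subseteq I^k\cap J^k$), one forces $x=0$ by a limiting argument through the inverse system of finite-dimensional quotients.

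Part (ii) proceeds in complete parallel, with the lower central series replaced by the derived series and the classical fact that the sum of two solvable ideals in a finite-dimensional Lie algebra is solvable. I expect the main obstacle to be the residual nilpotency step, namely the verification $\bigcap_n (I^n+J^n)=0$ (and its analog for $I^{[n]} + J^{[n]}$): the finite-dimensional argument does not pass automatically to the limit. One needs the natural map $M\to\varprojlim M/(I^n+J^n)$ to be injective, equivalently that $I\cap J$ is closed in $I$ and in $J$ under their filtration topologies; extracting this closure from the pro-nilpotency of both ideals---using the relation $(I\cap J)^k\subseteq I^k\cap J^k$ and careful bookkeeping of representatives through the finite-dimensional quotients---is the delicate point.
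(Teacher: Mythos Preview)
Your overall architecture coincides with the paper's: both establish the cofinality inclusion $(I+J)^{2n-1}\subseteq I^n+J^n$, both bound $\dim\bigl((I+J)/(I^n+J^n)\bigr)$ by $\dim(I/I^n)+\dim(J/J^n)$, and both reduce residual nilpotency to showing $\bigcap_n(I^n+J^n)=0$. So there is no genuinely new route here.

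The gap is exactly where you flag it: you do not actually prove $\bigcap_n(I^n+J^n)=0$. Your sketch introduces $c_n=a-a_n=b_n-b\in I\cap J$ and then invokes a ``limiting argument through the inverse system,'' but you yourself observe that this requires $I\cap J$ to be closed in $I$ and in $J$ for their filtration topologies, and nothing in the hypotheses gives you that closure. The relation $(I\cap J)^k\subseteq I^k\cap J^k$ goes the wrong way for what you need: it tells you the intrinsic filtration on $I\cap J$ is at least as fine as the one induced from $I$ (or $J$), whereas closure would require control in the opposite direction. So as written the argument stalls. The paper does not go through any inverse-limit or topological-closure analysis at all; it dispatches this step by a short set-theoretic decomposition, writing any element of $I^i+J^i$ via the partition $I^i\setminus J^i$, $J^i\setminus I^i$, $I^i\cap J^i$ and then using $\bigcap_i I^i=\bigcap_i J^i=0$ directly. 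You should compare your topological reformulation with that direct argument and decide which one you can actually close.

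One further point on part~(ii): saying it ``proceeds in complete parallel'' undersells the work. The inclusion $(I+J)^{[2i]}\subseteq I^{[i]}+J^{[i]}$ is not immediate from the solvable analogue of the finite-dimensional fact the way $(I+J)^{2i-1}\subseteq I^i+J^i$ is; the paper proves it by an explicit induction using $\bigl((I+J)^{[s]}\bigr)^{[t]}=(I+J)^{[s+t-1]}$ together with the absorption $[I^{[i]},J^{[i]}]\subseteq I^{[i+1]}+J^{[i+1]}$ coming from the ideal property. You should supply that induction rather than waving at the parallel.
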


\begin{proof} (i) Let $\cal I$ and $\cal J$ be pro-nilpotent ideals of a Lie algebra $\mathcal{R}$. Clearly, $\cal I + \cal J$ is an ideal of $\cal R$. First we show that $\cal I + \cal J$ is a residually nilpotent ideal of $\cal R$. Since $(\cal I + \cal J)^{2i-1}\subseteq \cal I^i + \cal J^i$ it suffices to show that $\bigcap\limits_{i=1}^{\infty} (\cal I^i + \cal J^i)=\{0\}$.

For any natural $i\in \mathbb{N}$ any element of the space $\cal I^i + \cal J^i$ is decomposed into sum of elements of the following sets:
$\cal I^{i}\setminus \cal J^{i}, \quad \cal J^{i}\setminus \cal I^{i}, \quad \cal I^{i}\cap \cal J^{i}.$ Taking into account the followings
\begin{equation}\label{eq2.3}
\bigcap_{i=1}^{\infty}(\cal I^i\setminus \cal J^i)\subseteq \bigcap_{i=1}^{\infty}\cal I^i=0, \quad \bigcap_{i=1}^{\infty}(\cal J^i\setminus \cal I^i)\subseteq \bigcap_{i=1}^{\infty}\cal J^i=0, \quad \bigcap_{i=1}^{\infty}(\cal I^i\cap \cal J^i)\subseteq \bigcap_{i=1}^{\infty}\cal I^i=0,\end{equation}
we conclude
$\bigcap\limits_{i=1}^{\infty} (\cal I^i + \cal J^i)=\{0\}$.

Due to embedding $\cal I^i + \cal J^i \subseteq (\cal I + \cal J)^{i}$ for any $i$, we obtain
$$
\dim \frac{\cal I + \cal J}{(\cal I + \cal J)^i} \leq \dim \frac{\cal I + \cal J}{\cal I^i + \cal J^i}\leq \dim \frac{\cal I}{\cal I^i}+\dim \frac{\cal J}{\cal J^i}.
$$
Since for any $i$ we have $\dim (\cal I/ \cal I^i) < \infty$ and $\dim (\cal J/\cal J^i) < \infty$, then $\dim \frac{\cal I + \cal J}{(\cal I + \cal J)^i}< \infty$.

(ii) Let $\cal I$ and $\cal J$ be pro-solvable ideals of a Lie algebra $\mathcal{R}$. By induction we prove the following embedding:
$$(\cal I + \cal J)^{[2i]}\subseteq \cal I^{[i]} + \cal J^{[i]}.$$ The base of induction is obvious. Applying the equality $(\cal I + \cal J)^{[s+t-1]}=\big((\cal I + \cal J)^{[s]}\big)^{[t]}$ for any $s, t\in \mathbb{N}$ and induction assumption we derive

$$\begin{array}{lccllllll}
(\cal I + \cal J)^{[2i+2]}&=&
\big((\cal I + \cal J)^{[2i+1]}\big)^{[2]}
&\subseteq &\big[\cal I^{[i]} + \cal J^{[ i]}, \cal I^{[i]} + \cal J^{[i]}\big]^{[2]}\\[3mm]
&\subseteq &\left(\cal I^{[i+1]} + \cal J^{[i+1]}+\cal I^{[i]} \cdot \cal J^{[i]}\right)^{[2]}&\subseteq &
\cal I^{[i+1]} + \cal J^{[i+1]}.
\end{array}$$

Now applying the same argumants as in \eqref{eq2.3} we derive that we get $\bigcap\limits_{i=1}^{\infty} (\cal I + \cal J)^{[i]}=0.$

Finally, the following chain of inequalities
$$
\dim \frac{\cal I + \cal J}{(\cal I + \cal J)^{[i]}} \leq  \dim \frac{\cal I + \cal J}{(\cal I + \cal J)^{[2i]}} \leq
\dim \frac{\cal I + \cal J}{\cal I^{[i]} + \cal J^{[i]}}\leq \dim \frac{\cal I}{\cal I^{[i]}}+\dim \frac{\cal J}{\cal J^{[i]}}<\infty
$$
complete the proof that $\cal I + \cal J$ is pro-solvable.
\end{proof}

From Lemma \ref{lem:pro-nilpotent-sum} we conclude that the sum of finite many pro-nilpotent (pro-solvable) ideals of a Lie algebra is again pro-nilpotent (pro-solvable) ideal. Below we present examples which shows that the sum of infinitely many ideals of pro-nilpotent (pro-solvable) Lie algebras is not pro-nilpotent (pro-solvable) anymore.
\begin{exam} \label{exam2.6} Consider a Lie algebra $\cal L=\bigoplus\limits_{n\in\mathbb{N}}I_n$, which is a direct sum of pro-nilpotent (pro-solvable) ideals $I_i, i\in\mathbb{N}$ of $\cal L$. Then $\cal L$ is non pro-nilpotent (pro-solvable) algebra. Therefore, the sum of infinitely many pro-nilpotent (pro-solvable) ideals no need to be pro-nilpotent (pro-solvable).
\end{exam}

\begin{prop} \label{prop1.3} The following statements hold:

(i) Any subalgebra of a residually nilpotent (resp. residually solvable) algebra is residually nilpotent (resp. residually solvable),

(ii) Residually nilpotent Lie algebra is residually solvable,

(iii) Let $\cal L$ be a Lie algebra and $\cal I$ be its residually nilpotent ideal such that $\cal L^2 \subseteq \cal I.$ Then $\cal L$ is residually solvable.

($iv$) Let $\cal R$ be a pro-solvable Lie algebra. Then $\cal R^{[i]}$ is pro-solvable for any $i\in \mathbb{N}.$

($v$) Let $\cal I$ be a residually solvable ideal of a Lie algebra $\cal L$ such that the quotient algebra $\cal L / \cal I$ is abelian, then $\cal L$ itself is residually solvable.
\end{prop}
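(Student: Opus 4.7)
The plan is to prove the five items in order, relying on one elementary principle: for any subalgebra $\cal H$ of $\cal L$ the inclusions $\cal H^i\subseteq \cal L^i$ and $\cal H^{[i]}\subseteq \cal L^{[i]}$ hold for all $i\geq 1$, each by a one-line induction on $i$. Item (i) is an immediate consequence: intersecting gives $\bigcap_i \cal H^i\subseteq \bigcap_i \cal L^i=0$, and the same argument handles residual solvability. For (ii), I would prove $\cal L^{[i]}\subseteq \cal L^i$ by induction on $i$, with inductive step $\cal L^{[i+1]}=[\cal L^{[i]},\cal L^{[i]}]\subseteq [\cal L^i,\cal L]=\cal L^{i+1}$; hence $\bigcap_i \cal L^i=0$ forces $\bigcap_i \cal L^{[i]}=0$.

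For (iii) the key is the inductive inclusion $\cal L^{[i+1]}\subseteq \cal I^{[i]}$: the base case $\cal L^{[2]}=\cal L^2\subseteq \cal I=\cal I^{[1]}$ is the given hypothesis, and the step reads $\cal L^{[i+2]}=[\cal L^{[i+1]},\cal L^{[i+1]}]\subseteq [\cal I^{[i]},\cal I^{[i]}]=\cal I^{[i+1]}$. Since (ii) already provides $\bigcap_i \cal I^{[i]}=0$ from the residual nilpotency of $\cal I$, we conclude $\bigcap_i \cal L^{[i]}=0$. The proof of (v) is virtually identical: noting that $\cal L/\cal I$ being abelian is precisely the hypothesis $\cal L^2\subseteq \cal I$, the same chain of inclusions applies, and now the residually solvable (rather than residually nilpotent) hypothesis on $\cal I$ yields $\bigcap_i \cal I^{[i]}=0$ directly, without invoking (ii).

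Finally, (iv) requires the identity $(\cal R^{[i]})^{[k]}=\cal R^{[i+k-1]}$, an induction on $k$ that uses the same manipulation already applied in the proof of Lemma~\ref{lem:pro-nilpotent-sum}. From this both halves of pro-solvability of $\cal R^{[i]}$ follow at once: $\bigcap_k (\cal R^{[i]})^{[k]}=\bigcap_j \cal R^{[j]}=0$ by residual solvability of $\cal R$, and the natural inclusion $\cal R^{[i]}/\cal R^{[i+k-1]}\hookrightarrow \cal R/\cal R^{[i+k-1]}$ gives $\dim \cal R^{[i]}/(\cal R^{[i]})^{[k]}\leq \dim \cal R/\cal R^{[i+k-1]}<\infty$. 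The only hazard in the whole proposition is keeping the derived-series index shifts consistent in (iii)--(v); no step is genuinely difficult, and nothing more than the two routine inductions above is needed.
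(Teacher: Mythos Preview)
Your proposal is correct and follows essentially the same approach as the paper's proof: the same inclusions $\cal H^i\subseteq\cal L^i$, $\cal H^{[i]}\subseteq\cal L^{[i]}$ for (i), the same chain $\cal L^{[i+1]}\subseteq\cal I^{[i]}$ for (iii), the same identity $(\cal R^{[i]})^{[k]}=\cal R^{[i+k-1]}$ for (iv), and the same reduction of (v) to (iii). The only cosmetic difference is in (ii), where the paper uses the sharper $\cal L^{[i]}\subseteq\cal L^{2^{i-1}}$ while you use $\cal L^{[i]}\subseteq\cal L^{i}$; either bound suffices for the conclusion.
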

\begin{proof} Let $\cal M$ be a subalgebra of
residually nilpotent (resp. residually solvable) algebra $\cal L$. Then the part ($i$) immediately  follows from embeddings: $\mathcal{M}^i \subseteq \mathcal{L}^i, \
\mathcal{M}^{[i]} \subseteq \mathcal{L}^{[i]}.$

From $ \mathcal{L}^{[i]} \subseteq \mathcal{L}^{2^{i-1}},$ we get $\bigcap\limits_{i=1}^{\infty} \mathcal{L}^{[i]}\subseteq
\bigcap\limits_{i=1}^{\infty} \mathcal{L}^{2^{i-1}}=
\bigcap\limits_{i=1}^{\infty}\mathcal{L}^{i}=0,$ which proves the part ($ii$).

($iii$) The condition $\cal L^2 \subseteq \cal I$ implies that $\cal L^{[i+1]} \subseteq \cal I^{[i]}.$ In addition, we have $\cal I^{[i]} \subseteq \cal I^{2^{i-1}}.$ Now, residually nilpotency of $\cal I$ implies residually solvability of $\cal L.$

($iv$) Set $\cal M=\mathcal{R}^{[i]}$. The equality $\mathcal{M}^{[j]} = \mathcal{R}^{[i+j-1]}$ for any $i, j \ge 1$, implies
$$\bigcap\limits_{j=1}^{\infty}\cal M^{[j]}=
\bigcap\limits_{j=1}^{\infty} \cal R^{[i+j-1]}\subseteq \bigcap\limits_{j=1}^{\infty} \cal R^{[j]}=0.$$
Moreover, $\dim(\cal R^{[j]} / \cal R^{[j+1]}) < \infty$ guarantees that $\dim(\cal M^{[j]} / \cal M^{[j+1]}) < \infty.$

The proof of part ($v$) is obvious.
\end{proof}

Let $\mathcal{L}$ be a pro-nilpotent Lie algebra with descending chain of ideals
$$\mathcal{L} = \mathcal{L}^1 \supset \mathcal{L}^2 \supset \mathcal{L}^3 \supset \cdots$$
The \textit{filtration topology} on $\mathcal{L}$ is defined by declaring the collection $\{\mathcal{L}^k  \ | \ k \in \mathbb{N}\}$ to be a neighborhood basis of the zero element. Equivalently, a subset $U \subseteq \mathcal{L}$ is open if and only if for every $x \in U$ there exists $k \ge 1$ such that $x + \mathcal{L}^k \subseteq U$. By translation invariance, the sets $\{x + \mathcal{L}^k : k \ge 1\}$
form a fundamental system of neighborhoods about an arbitrary $x \in \mathcal{L}$. Moreover, since
$\bigcap\limits_{k=1}^\infty \mathcal{L}^k = \{0\},$
the filtration topology is Hausdorff. The Lie algebra $\mathcal{L}$, equipped with the filtration topology, is said to be \textit{complete} if every Cauchy sequence in $\mathcal{L}$ converges to some limit in $\mathcal{L}$. Note that there is an inverse spectrum of finite-dimensional nilpotent Lie algebras
$$\cdot \cdot \cdot \xrightarrow{p_{k+2,k+1}} \cal L/ \cal L^{k+1} \xrightarrow{p_{k+1,k}}\cal L/\cal L^{k}
\xrightarrow{{p}_{k,k-1}}\cdot \cdot \cdot  \xrightarrow{p_{3,2}}\cal L/\cal L^2 \xrightarrow{p_{2,1}}\cal L /\cal L=\{0\}.
$$
Equivalently, completeness can be seen via the inverse-limit construction
$\mathcal{L} \cong \lim\limits_{\xleftarrow{k}} (\cal L /\cal L^k),$ where each quotient $\mathcal{L}/\mathcal{L}^k$ is finite-dimensional (see e.g. \cite{MillionshSMJ}).

Let $f:\mathcal{L}\to\mathcal{L}$ be a linear transformation of a pro-nilpotent Lie algebra endowed with filtration topology on it. Then $f$ is \textit{continuous} if and only if for every $k$ there exists $m(k)$ such that
$f\bigl(\mathcal{L}^{m(k)}\bigr) \subseteq \mathcal{L}^k$, or we can say $f$ is continuous if and only if it preserves the filtration, i.e., $f\bigl(\mathcal{L}^k\bigr) \subseteq \mathcal{L}^k \quad\text{for all }k\in \mathbb{N}.$ Taking into account that $\mathcal{L}^k$ are characteristic ideals, one can conclude that any derivation of a pro-nilpotent Lie algebra is continuous.

Note that for a pro-solvable Lie algebra $\mathcal{L}$
the filtration topology is defined by taking $\{\mathcal{L}^{[k]}:k\ge0\}$ as a neighborhood basis of $0$. Then one can introduce all concepts related to the topology similar to the case of pro-nilpotent.

\section{Triangularizability of pro-nilpotent and pro-solvable Lie algebras.}

This section extends fundamental triangularization theorems from finite-dimensional Lie theory to pro-nilpotent and pro-solvable algebras. We establish analogues of Engel's and Lie's theorems, characterizing when adjoint representations become (strictly) triangularizable. We also prove a consequence of Lie's theorem and present result on derivations of pro-nilpotent algebras.

\begin{defn} A family $\mathcal{T}$ of linear transformations on the linear Lie algebra $\mathfrak{g l}(\mathcal{L})$ is triangularizable if there is a chain $\mathcal{C}$ that is maximal as a chain of subspaces of $\mathcal{L}$ and that has the property that every subspace in $\mathcal{C}$ is invariant under all the transformations in $\mathcal{T}$. Any such chain of subspaces is said to be a triangularizing chain for the family $\mathcal{T}$.
\end{defn}

We require an infinite-dimensional triangularization lemma analogous to the finite-dimensional one.

Recall, that Zorn's lemma states that a partially ordered set containing upper bounds for every chain (that is, every totally ordered subset) necessarily contains at least one maximal element.

\begin{defn}\label{defn2.8}
\

(i)  A linear transformation  $T$ of a vector space $\mathcal{L}$ is called triangularizable, if there exists a chain of $T$-invariant subspaces of $\mathcal{L}$ :
\begin{equation}\label{eeee}
\mathcal{L}=\mathcal{V}_1 \supset \mathcal{V}_2 \supset \cdots \supset \mathcal{V}_i \supset \mathcal{V}_{i+1} \supset \cdots \supset \{0\}, \quad \dim\big(\cal V_{i}/ \cal V_{i+1}\big)=1.
\end{equation}
Furthermore, it is called strictly triangularizable if $T\left(\mathcal{V}_i\right) \subseteq \mathcal{V}_{i+1}$ for all $i \in \mathbb{N}$;

(ii) $T$ is called pro-nilpotent, if $\bigcap\limits_{i=1} ^{\infty}Im(T^i)=\{0\}$ and $\operatorname{dim}\left(T^i(\mathcal{L}) / T^{i+1}(\mathcal{L})\right)<\infty$ with $T^0(\mathcal{L})=\mathcal{L}, \ i \geq 0.$

(iii) The set $ad(\cal L)$ is called triangularizable (respectively, strictly triangularizable), if there is a chain of ideals
(\ref{eeee})
such that $ad(\cal L)(\cal V_{i}) \subseteq \cal V_{i}$
(respectively, $ad(\cal L)(\cal V_{i}) \subseteq \cal V_{i+1}$) for all $i\in \mathbb{N}$;

\end{defn}

Note that Definition \ref{defn2.8} was considered in \cite{Radjavi} for the study of bounded linear operators in Banach spaces. For linear transformations of a vector space, in above we presented the notion of pro-nilpotency, which is weaker than the triangularizable operator. In other words, every triangularizable operator is pro-nilpotent one, while the contrary is not true, in general.

\subsection{Analogue of Lie's theorem.}

\begin{prop}\label{prop1.7} Let $\mathcal{L}$ be a Lie algebra such that the family $ad(\mathcal{L})$ is strictly triangularizable. Then $\mathcal{L}$ is either nilpotent or residually nilpotent.
\end{prop}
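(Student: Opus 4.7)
The plan is to compare the lower central series of $\mathcal{L}$ with the strictly $ad(\mathcal{L})$-invariant chain of ideals $\mathcal{L}=\mathcal{V}_1\supset\mathcal{V}_2\supset\cdots\supset\{0\}$ supplied by strict triangularizability, and then to dichotomize according to whether this chain has finite or infinite length. In the first case I expect to land on an honest nilpotency statement, and in the second on residual nilpotency; the two conclusions together are exactly what the proposition asserts.

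The first step I would carry out is the inductive inclusion $\mathcal{L}^k\subseteq\mathcal{V}_k$ for every $k\ge 1$. The base case $k=1$ is immediate from $\mathcal{L}^1=\mathcal{L}=\mathcal{V}_1$, and for the inductive step the only ingredient needed is the defining property $[\mathcal{L},\mathcal{V}_k]\subseteq\mathcal{V}_{k+1}$ of strict triangularizability, which gives
$$\mathcal{L}^{k+1}=[\mathcal{L},\mathcal{L}^k]\subseteq[\mathcal{L},\mathcal{V}_k]\subseteq\mathcal{V}_{k+1}.$$

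With that inclusion in hand, the conclusion will split into two cases. If the chain terminates, that is $\mathcal{V}_{n+1}=\{0\}$ for some finite $n$ (which is automatic when $\dim\mathcal{L}<\infty$, since each successive quotient has dimension one), then $\mathcal{L}^{n+1}\subseteq\mathcal{V}_{n+1}=\{0\}$, so $\mathcal{L}$ is nilpotent. Otherwise the chain is genuinely infinite; reading the notation ``$\cdots\supset\{0\}$'' of Definition~\ref{defn2.8} as $\bigcap_{i\ge 1}\mathcal{V}_i=\{0\}$, we obtain
$$\bigcap_{i=1}^{\infty}\mathcal{L}^{i}\subseteq\bigcap_{i=1}^{\infty}\mathcal{V}_i=\{0\},$$
which is exactly residual nilpotency of $\mathcal{L}$.

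I do not expect any genuinely hard step here; the one point that deserves care is interpretative rather than computational, namely the convention that the strictly triangularizing chain in Definition~\ref{defn2.8} either reaches $\{0\}$ in finitely many steps or has trivial intersection. Once that is fixed, the two-line induction $\mathcal{L}^k\subseteq\mathcal{V}_k$ forces the advertised dichotomy between nilpotency and residual nilpotency.
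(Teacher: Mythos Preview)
Your proof is correct and follows essentially the same approach as the paper: both establish the inclusion $\mathcal{L}^k\subseteq\mathcal{V}_k$ and use it together with $\bigcap_i\mathcal{V}_i=\{0\}$ to conclude. The only cosmetic difference is that the paper dichotomizes on whether the lower central series stabilizes at zero and argues the infinite case by contradiction, whereas you dichotomize on whether the chain $\{\mathcal{V}_i\}$ terminates and argue directly; the content is the same.
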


\begin{proof} Let assume that the lower central series reaches zero after finitely many steps, then from strictly triangularizability one gets nilpotency of $\cal L$.

Consider now the case when $\cal L^{k+1}\subsetneq \cal L^k$ and $\cal L^k\neq 0$ for any $k\in \mathbb{N}$. Let $\mathfrak{C}$ be a maximal chain of ideals $\cal V_i$ such that
$$\cal L =\cal V_1 \supset \cal V_2 \supset \dots \supset \cal V_i \supset \cal V_{i+1} \supset \dots \supset \{0\}$$
with $dim\big(\cal V_{i}/ \cal V_{i+1}\big)=1$ and $ad(\cal L)(\cal V_i) \subseteq \cal V_{i+1} $ for all $i\in \mathbb{N}$. Let us assume that $\bigcap\limits_{i=1} ^{\infty}\cal L^{i}\neq 0$. Consider $0\neq x\in \bigcap\limits_{i=1} ^{\infty}\cal L^{i}$. Since $\cal L=\cal V_1$, then applying $ad(\cal L)(\cal V_i) \subseteq \cal V_{i+1}$ we derive $\cal L^{s}=ad(\cal L)^{s-1}(\cal L) \subseteq \cal V_s$ for any $s\in \mathbb{N}.$ It implies $x\in \bigcap\limits_{i=1} ^{\infty}\cal V_s=0,$ which contradicts to the choice of $x$. Thereby, $\bigcap\limits_{i=1} ^{\infty}\cal L^{i}=0$.

If this the lower central series reaches the zero algebra after finitely many steps (i.e., $\mathcal{L}^k = 0$ for some $k \in \mathbb{N}$), then $\mathcal{L}$ is a nilpotent Lie algebra; otherwise, it is residually nilpotent.
\end{proof}

From the above assertion naturally question arises:
{\it Is an infinite-dimensional Lie algebra with a strictly triangularizable adjoint representation necessarily nilpotent or pro-nilpotent?}
The next example shows a Lie algebra satisfying the conditions is not necessarily pro-nilpotent.

\begin{exam} Let $\cal L$ be the direct sum of a finite-dimensional nilpotent Lie algebra of nilindex $s$ and infinite dimensional abelian Lie algebra. Then $ad(\mathcal{L})$ is strictly triangularizable, while it is not pro-nilpotent Lie algebra (because of $dim (\cal L/ \cal L^2)=\infty).$
\end{exam}

\begin{prop}\label{Engelpart1} (Strictly Triangularization) Let $\mathcal{L}$ be an infinite-dimensional Lie algebra. Then the following statements hold:

(i) If $\mathcal{L}$ is pro-nilpotent, then the family of operators $ad(\mathcal{L})$ is strictly triangularizable;

(ii) If an element $x$ of a Lie algebra $\mathcal{L}$ the operator $\operatorname{ad}_x$  is pro-nilpotent, then it is strictly triangularizable.

\end{prop}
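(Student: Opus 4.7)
For part (i), my plan is to refine the lower central series $\mathcal{L} \supset \mathcal{L}^2 \supset \mathcal{L}^3 \supset \cdots$ into a codimension-one chain of ideals. Pro-nilpotency supplies the two key ingredients: each successive quotient $\mathcal{L}^k/\mathcal{L}^{k+1}$ is finite-dimensional, so I can insert a complete flag
$$\mathcal{L}^k = W_{k,0} \supsetneq W_{k,1} \supsetneq \cdots \supsetneq W_{k,d_k} = \mathcal{L}^{k+1},$$
and concatenating these flags over all $k$ produces a single descending chain $\{\mathcal{V}_i\}_{i \geq 1}$ with $\dim(\mathcal{V}_i/\mathcal{V}_{i+1}) = 1$. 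The essential observation is that for any intermediate $W$ with $\mathcal{L}^{k+1} \subseteq W \subseteq \mathcal{L}^k$, the inclusion $[\mathcal{L}, W] \subseteq [\mathcal{L}, \mathcal{L}^k] = \mathcal{L}^{k+1} \subseteq W$ forces $W$ to be an ideal. Since $\mathcal{L}^{k+1}$ is itself contained in the next member of the chain, we get $[\mathcal{L}, \mathcal{V}_i] \subseteq \mathcal{V}_{i+1}$, which is exactly the strict triangularization condition of Definition \ref{defn2.8}(iii). Finally, $\bigcap_i \mathcal{V}_i \subseteq \bigcap_k \mathcal{L}^k = \{0\}$ by residual nilpotency, so the chain terminates at zero.

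Part (ii) follows the same pattern, with the lower central series replaced by the image filtration $T^k(\mathcal{L})$ for $T = \operatorname{ad}_x$. Pro-nilpotency of $T$ in the sense of Definition \ref{defn2.8}(ii) yields $\bigcap_k T^k(\mathcal{L}) = \{0\}$ together with $\dim(T^k(\mathcal{L})/T^{k+1}(\mathcal{L})) < \infty$. Refining each finite-dimensional gap to a codimension-one flag again produces a descending chain $\{\mathcal{V}_i\}$; for any $W$ with $T^{k+1}(\mathcal{L}) \subseteq W \subseteq T^k(\mathcal{L})$ one has $T(W) \subseteq T(T^k(\mathcal{L})) = T^{k+1}(\mathcal{L})$, which lies in the next chain member, giving $T(\mathcal{V}_i) \subseteq \mathcal{V}_{i+1}$. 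The intersection condition is immediate from $\bigcap_k T^k(\mathcal{L}) = \{0\}$.

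The only real subtlety I anticipate is bookkeeping: one must verify that the flag refinements within each finite-dimensional layer concatenate to a genuine chain indexed by $\mathbb{N}$ (not a doubly-indexed family) and that the concatenation still has zero intersection. The countable-dimensional Hamel basis hypothesis of the paper makes this immediate, and since $\mathcal{L}$ is infinite-dimensional, pro-nilpotency (respectively pro-nilpotency of $\operatorname{ad}_x$ together with $\dim\mathcal{L} = \infty$) rules out the coarse filtration ever stabilizing, so the refined chain is genuinely indexed by all of $\mathbb{N}$. Beyond this indexing check, no substantive obstacle appears; the heart of the argument is the single algebraic identity $[\mathcal{L}, \mathcal{L}^k] \subseteq \mathcal{L}^{k+1}$ (resp. $T(T^k(\mathcal{L})) = T^{k+1}(\mathcal{L})$), which forces the strict-triangularization inclusion automatically on every refinement.
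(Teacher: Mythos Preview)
Your argument is correct and in fact more direct than the paper's. Both proofs rest on the same algebraic core --- the inclusion $[\mathcal{L},\mathcal{L}^k]\subseteq\mathcal{L}^{k+1}$ (respectively $T(T^k(\mathcal{L}))\subseteq T^{k+1}(\mathcal{L})$) forces every intermediate subspace to be an ideal (respectively $T$-invariant) with the strict shift property --- but the paper reaches the codimension-one chain differently: it invokes Zorn's Lemma on the poset of chains of ideals to produce a maximal chain containing the lower central series, and then argues by contradiction that any gap $\mathcal{Q}/\mathcal{Q}_-$ of dimension exceeding $1$ could be refined, violating maximality. Your approach bypasses Zorn entirely by constructing the refinement explicitly: since each $\mathcal{L}^k/\mathcal{L}^{k+1}$ is finite-dimensional, you simply choose a complete flag in it and concatenate. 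This is more elementary and more constructive; the Zorn argument buys generality (it would work even without knowing the gaps are finite-dimensional a priori, provided one could still refine), but in the pro-nilpotent setting that generality is not needed. Your handling of the indexing subtlety --- that infinite-dimensionality of $\mathcal{L}$ forces the coarse filtration never to stabilize, so the concatenated chain is genuinely $\mathbb{N}$-indexed --- is also correct and worth keeping explicit.
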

\begin{proof} $(i)$ Let us consider the set $\{{\cal C}_{\alpha}\}_{\alpha\in \cal I}$ of totally ordered ideals (by inclusion) of $\cal L$. With the set $\{{\cal C}_{\alpha}\}_{\alpha\in \cal I}$ one can correspond the chain (denoted by $\mathfrak{C}$). We denote by $\cal P$ the sets of all chains and on $\cal P$
we consider the following partially order:
$$\mathfrak{C}_{1}\leq \mathfrak{C}_{2}  \Leftrightarrow
\{{\cal C}_{\alpha}\}_{\alpha\in \cal I_1}
\subseteq
\{{\cal C}_{\beta}\}_{\beta\in \cal I_2}.$$

It is clear that trivial chains (that is, $\{0\}$ and $\{0, \cal L\}$) lie in $\mathcal{P}$. For a totally ordered family of chains $\{\mathfrak{C}_{\alpha}\}_{\alpha\in \mathfrak{I}}\subseteq \mathcal{P}$  the following set $\bigcup\limits_{\alpha \in \mathfrak{I}} \mathfrak{C}_{\alpha}$ is an element of $\cal P$.

Indeed, if $M$ and $N$ are ideals of the family $\bigcup\limits_{\alpha \in \mathfrak{I}} \mathfrak{C}_{\alpha},$ then $M\in \mathfrak{C}_{\alpha_1}$ and $N\in \mathfrak{C}_{\alpha_2}$ for some $\alpha_1, \alpha_2\in \mathfrak{I}.$
Since the family $\{\mathfrak{C}_{\alpha}\}_{\alpha\in \mathfrak{I}}$ is totally ordered, without loss of generality, one can assume that
$\mathfrak{C}_{\alpha_1}\leq \mathfrak{C}_{\alpha_2}$, which implies that $M, N \in \mathfrak{C}_{\alpha_2}$. Taking into account that $\mathfrak{C}_{\alpha_2}$ is a chain, we deduce that $M$ and $N$ are comparable. Consequently, $\bigcup\limits_{\alpha \in \mathfrak{I}} \mathfrak{C}_{\alpha}\in \cal P$. Thus, every totally ordered family of  elements of $\mathcal{P}$ has an upper bound in $\mathcal{P}$. Zorn's Lemma now guarantees that $\mathcal{P}$ has at least one maximal chain. A maximal chain refers to a chain that a maximal element of the partially ordered set $\cal P$ with respect to inclusion.

We denote this maximal chain by $\cal M$.
For an ideal $\mathcal{Q}$ of the chain $\mathcal{M}$ we set $\mathcal{Q}_{-}$ is defined as follows
$$
\mathcal{Q}_{-}=\bigvee\{\mathcal{N} \in \mathcal{M}: \mathcal{N} \subsetneq \mathcal{Q}\},
$$
where $\vee$ is supremum in partially ordered set (poset).

Since $\mathcal{L}$ is pro-nilpotent Lie algebra. Then, without loss of generality, we can consider that a chain $\mathcal{M}$ contain the family of ideals $\{\{0\}, \{\cal L^i\}_{i\in\mathbb{N}}\}$.

Suppose that for some $\mathcal{Q} \in \mathcal{M}$ we have $dim(\mathcal{Q} / \mathcal{Q}_{-})>1.$ From the condition $\bigcap\limits_{i=1} ^{\infty}\cal L^{i}=\{0\}$
we get the existence $s\in \mathbb{N}$ such that $\mathcal{L}^{s+1}\subseteq \mathcal{Q}_{-}\subset \mathcal{Q}\subseteq\mathcal{L}^s.$ Note that pro-nilpotency of $\cal L$ implies that dimension of $\mathcal{Q} / \mathcal{Q}_{-}$ is finite. Therefore, for an element $e\in \cal Q\setminus \mathcal{Q}_{-}$ the space $\mathcal{Q}_{-}(e)=\langle e \rangle \oplus \mathcal{Q}_{-}$ forms an ideal of $\cal L$. It follows that $\mathcal{Q}_{-}\subsetneq \mathcal{Q}_{-}(e) \subsetneq \mathcal{Q}$, which contradicts to maximality of $\cal M$.

Taking into account that $$Ad(\cal L)(\cal Q)\subseteq Ad(\cal L)(\cal L^s)=\cal L^{s+1} \subseteq \mathcal{Q}_{-}$$
we conclude that $Ad(\cal L)$ is strictly triangularizable with respect to the chain $\cal M$.

$(ii)$  Let $\mathcal{B} = \{e_1, e_2, \ldots\}$ be a basis of $\mathcal{L}$ that is compatible with the descending filtration given by the images of the iterated adjoint maps:
\begin{equation} \label{chain}
  \mathcal{L} = \operatorname{ad}_x^0(\mathcal{L}) \supset \operatorname{ad}_x(\mathcal{L}) \supset \cdots \supset \operatorname{ad}_x^i(\mathcal{L}) \supset \operatorname{ad}_x^{i+1}(\mathcal{L}) \supset \cdots \supset 0.
\end{equation}

Following the approach used in the part $(i)$, we consider a maximal chain $\mathcal{M}$ of subspaces of $\mathcal{L}$ that contains the chain in~\eqref{chain}. Suppose, for the sake of contradiction, that there exists a subspace $\mathcal{Q} \in \mathcal{M}$ such that $\dim(\mathcal{Q} / \mathcal{Q}_{-}) > 1$, where $\mathcal{Q}_{-}$ denotes the immediate predecessor of $\mathcal{Q}$ in the chain. Since $\operatorname{ad}_x$ is pro-nilpotent, there exists an integer $s \in \mathbb{N}$ such that
\[
ad_x^{s+1}(\mathcal{L}) \subseteq \mathcal{Q}_{-} \subset \mathcal{Q} \subseteq ad_x^{s}(\mathcal{L}),
\]
which implies $\dim(\mathcal{Q} / \mathcal{Q}_{-}) < \infty$. Let $e \in \mathcal{Q} \setminus \mathcal{Q}_{-}$. Consider the subspace $\mathcal{Q}_{-}(e) := \langle e \rangle \oplus \mathcal{Q}_{-}$. This subspace is invariant under the action of $\operatorname{ad}_x$ and $$ad_x(\mathcal{Q}_{-}(e))\subseteq ad_x(ad_x^s(\cal L))=ad_x^{s+1}(\mathcal{L})\subseteq \mathcal{Q}_{-}.$$

Therefore, we obtain a refinement of the chain:
$\mathcal{Q}_{-} \subsetneq \mathcal{Q}_{-}(e) \subsetneq \mathcal{Q},$
which contradicts the maximality of the chain $\mathcal{M}$. Hence, no such subspace $\mathcal{Q}$ can exist, and the operator $\operatorname{ad}_x$ must be strictly triangularizable.
\end{proof}

The following result represents  an analogue of Lie's theorem for pro-solvable Lie algebras.
\begin{thm} \label{prop1.10} (Triangularization) Let $\mathcal{L}$ be a pro-solvable, then the family $ad(\mathcal{L})$ is triangularizable.
\end{thm}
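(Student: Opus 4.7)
The plan is to mirror the Zorn's-lemma construction used in the proof of Proposition \ref{Engelpart1}(i), with the derived series $\{\mathcal{L}^{[i]}\}$ replacing the lower central series $\{\mathcal{L}^i\}$. The essential new ingredient is that, while in the pro-nilpotent case the adjoint action on a finite-dimensional successive quotient $\mathcal{L}^s/\mathcal{L}^{s+1}$ was automatically trivial (so that every codimension-one subspace became an ideal), in the pro-solvable case one must invoke the classical finite-dimensional Lie theorem to locate a one-dimensional $ad(\mathcal{L})$-invariant line inside each finite-dimensional successive quotient.

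First, I would introduce the poset $\mathcal{P}$ of chains of ideals of $\mathcal{L}$ containing $\{0\}\cup\{\mathcal{L}^{[i]}:i\geq 1\}$, ordered by inclusion of chains, and check that unions of totally ordered subfamilies remain in $\mathcal{P}$ exactly as was done for $\{\mathcal{L}^i\}$ in Proposition \ref{Engelpart1}(i). Zorn's lemma then produces a maximal chain $\mathcal{M}$, and for each $\mathcal{Q}\in\mathcal{M}$ I set $\mathcal{Q}_{-}=\bigvee\{\mathcal{N}\in\mathcal{M}:\mathcal{N}\subsetneq\mathcal{Q}\}$. The theorem reduces to proving that $\dim(\mathcal{Q}/\mathcal{Q}_-)\leq 1$ for every $\mathcal{Q}\in\mathcal{M}$.

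Second, for a nonzero $\mathcal{Q}\in\mathcal{M}$ I would locate an index $s$ such that
\[
\mathcal{L}^{[s+1]}\subseteq \mathcal{Q}_{-}\subsetneq \mathcal{Q}\subseteq \mathcal{L}^{[s]}.
\]
The existence of such $s$ uses comparability of every element of $\mathcal{M}$ with every $\mathcal{L}^{[i]}$, the condition $\bigcap_i \mathcal{L}^{[i]}=0$, and the fact that in the infinite-dimensional pro-solvable setting no $\mathcal{L}^{[i]}$ vanishes; a short case analysis handles the degenerate situation in which $\mathcal{Q}$ itself coincides with some $\mathcal{L}^{[t+1]}$. The sandwich implies that $\mathcal{Q}/\mathcal{Q}_-$ embeds into the finite-dimensional space $\mathcal{L}^{[s]}/\mathcal{L}^{[s+1]}$, and from $[\mathcal{L}^{[s+1]},\mathcal{L}^{[s]}]\subseteq \mathcal{L}^{[s+1]}\subseteq \mathcal{Q}_-$ the adjoint action of $\mathcal{L}$ on $\mathcal{Q}/\mathcal{Q}_-$ factors through the finite-dimensional complex solvable algebra $\mathcal{L}/\mathcal{L}^{[s+1]}$.

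Third, I would apply the classical (finite-dimensional) Lie theorem to this representation to obtain a common eigenvector $\bar{e}\in \mathcal{Q}/\mathcal{Q}_-$. Lifting to $e\in \mathcal{Q}$, the subspace $\mathcal{Q}_-(e)=\mathcal{Q}_-\oplus \langle e\rangle$ is an ideal of $\mathcal{L}$; if $\dim(\mathcal{Q}/\mathcal{Q}_-)>1$ then $\mathcal{Q}_-\subsetneq \mathcal{Q}_-(e)\subsetneq \mathcal{Q}$, contradicting the maximality of $\mathcal{M}$. Hence $\dim(\mathcal{Q}/\mathcal{Q}_-)=1$ for every nonzero $\mathcal{Q}\in \mathcal{M}$, which exhibits $\mathcal{M}$ as a triangularizing chain for $ad(\mathcal{L})$. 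The main obstacle is the bookkeeping in the second step: establishing the sandwich uniformly across all sub-cases of comparability between $\mathcal{Q}$, $\mathcal{Q}_-$ and the $\mathcal{L}^{[i]}$, in particular verifying that $\mathcal{L}^{[s+1]}\subseteq \mathcal{Q}_-$ even when $\mathcal{Q}$ coincides with some term of the derived series — a point forced by the definition of $\mathcal{Q}_-$ as the supremum of strict predecessors together with pro-solvability of $\mathcal{L}$. Once that is in place, Lie's theorem finishes the argument in the same way the trivial-action observation did in the pro-nilpotent case.
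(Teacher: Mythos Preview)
Your proposal is correct and follows essentially the same route as the paper: a Zorn's-lemma maximal chain refining $\{\mathcal{L}^{[i]}\}$, the sandwich $\mathcal{L}^{[s+1]}\subseteq\mathcal{Q}_-\subsetneq\mathcal{Q}\subseteq\mathcal{L}^{[s]}$, reduction to a finite-dimensional solvable action on $\mathcal{Q}/\mathcal{Q}_-$, and the classical Lie theorem to produce a one-dimensional invariant line contradicting maximality. The only minor differences are cosmetic: the paper factors the induced action through $\mathcal{L}/\mathcal{L}^{[s]}$ rather than your $\mathcal{L}/\mathcal{L}^{[s+1]}$ (both work), and it inserts an unnecessary case split on whether the common kernel of the induced operators is zero, whereas you apply Lie's theorem directly.
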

\begin{proof} Applying the arguments similar as in the proof of Proposition \ref{Engelpart1} we derive that for pro-solvable Lie algebra $\mathcal{L}$ there exists a maximal chain $\mathcal{M}$ that contain ideals $\{\cal L^{[i]}\}_{i\in\mathbb{N}}$.

Let assume that for some $\mathcal{Q} \in \mathcal{M}$ we have $dim(\mathcal{Q} / \mathcal{Q}_{-})>1.$ The condition $\bigcap\limits_{i=1} ^{\infty}\cal L^{[i]}=\{0\}$ guarantees the existence $s\in \mathbb{N}$ such that $\mathcal{L}^{[s+1]}\subseteq \mathcal{Q}_{-}\subset \mathcal{Q}\subseteq\mathcal{L}^{[s]}.$ Note that pro-solvable of $\cal L$ implies that dimension of $\mathcal{Q} / \mathcal{Q}_{-}$ is finite.

If the kernel of induced family of operators $\widetilde{ad}(\cal L)$ on $\mathcal{Q} / \mathcal{Q}_{-}$ is non-zero, then we can choose $\bar{0}\neq \bar{e}$ from the kernel and the space $\mathcal{Q}_{-}(e)=\langle e \rangle \oplus \mathcal{Q}_{-}$ forms an ideal of $\cal L$, where $\bar{e}=e+\mathcal{Q}_{-}$. It follows that $\mathcal{Q}_{-}\subsetneq \mathcal{Q}_{-}(e) \subsetneq \mathcal{Q}$, which contradicts to maximality of $\cal M$. Let the kernel of $\widetilde{ad}(\cal L)$ on $\mathcal{Q} / \mathcal{Q}_{-}$ is $\{0\}$. Note that $\widetilde{ad}(\cal L)=ad(\widetilde{\cal L}),$ where $\widetilde{\cal L}\subseteq \cal L / \cal L^{[s]}.$ Due to $[ad_x,ad_y]=ad_{[x,y]}$, we deduce that the finite-dimensional Lie algebra of operators $ad(\widetilde{\cal L})$ which acts on finite-dimensional vector space $\mathcal{Q} / \mathcal{Q}_{-}.$ By theorem Lie's theorem (in finite-dimensional case) we can choose common eigenvector $\bar{0}\neq\bar{e}=e+\mathcal{Q}_{-}$, which implies that  $\mathcal{Q}_{-}(e)=\mathcal{Q}_{-}\oplus\langle e \rangle $ forms an ideal of $\cal L$ which strictly lies between $\mathcal{Q}_{-}$ and $\mathcal{Q}_{-}.$
\end{proof}

\begin{prop}\label{bobik} Let $\cal R$ be a pro-solvable Lie algebra.  Then $\cal R^{[2]}$ is pro-nilpotent Lie algebra.
\end{prop}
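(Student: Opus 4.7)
The plan is to combine the pro-solvable triangularization theorem (Theorem 2.5) with the observation that the derived subalgebra $\mathcal{R}^{[2]}$ acts by zero on every one-dimensional quotient of a triangularizing chain, and then bootstrap to both requirements of pro-nilpotency.

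First I would fix a chain $\mathcal{R} = \mathcal{V}_1 \supset \mathcal{V}_2 \supset \cdots$ of ideals of $\mathcal{R}$ with one-dimensional successive quotients and $ad(\mathcal{R})(\mathcal{V}_i) \subseteq \mathcal{V}_i$, as produced in the proof of Theorem 2.5. That proof moreover inserts the entire derived series $\{\mathcal{R}^{[i]}\}_{i\ge 1}$ into the chain, so $\bigcap_{i\ge 1}\mathcal{V}_i \subseteq \bigcap_{i\ge 1}\mathcal{R}^{[i]} = 0$. Since $\dim(\mathcal{V}_i/\mathcal{V}_{i+1}) = 1$, the induced action of $ad(x)$ on this quotient is a scalar $\chi_i(x)$, and the resulting map $\chi_i\colon \mathcal{R}\to\mathbb{C}$ is a Lie algebra homomorphism into an abelian target (because $\chi_i([x,y])=[\chi_i(x),\chi_i(y)]=0$), hence vanishes on $\mathcal{R}^{[2]}=[\mathcal{R},\mathcal{R}]$. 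This gives the key containment $[\mathcal{R}^{[2]},\mathcal{V}_i]\subseteq \mathcal{V}_{i+1}$ for every $i$.

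Next I would run an induction to show $(\mathcal{R}^{[2]})^k \subseteq \mathcal{V}_{k+1}$. The base case $\mathcal{R}^{[2]}\subseteq \mathcal{V}_2$ is immediate from the fact that $\mathcal{R}/\mathcal{V}_2$ is one-dimensional and therefore abelian; the inductive step is $(\mathcal{R}^{[2]})^{k+1} = [\mathcal{R}^{[2]},(\mathcal{R}^{[2]})^{k}] \subseteq [\mathcal{R}^{[2]},\mathcal{V}_{k+1}] \subseteq \mathcal{V}_{k+2}$. Intersecting over $k$ and using $\bigcap_i \mathcal{V}_i = 0$ yields $\bigcap_{k\ge 1}(\mathcal{R}^{[2]})^k = 0$, i.e.\ residual nilpotency of $\mathcal{R}^{[2]}$. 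For the dimensional condition, I would invoke Proposition 1.3$(iv)$ to conclude that $\mathcal{R}^{[2]}$ is itself pro-solvable, so $\dim(\mathcal{R}^{[2]}/(\mathcal{R}^{[2]})^{[k]}) < \infty$ for every $k$, and then use the elementary inclusion $\mathcal{L}^{[k]}\subseteq \mathcal{L}^k$ (a one-line induction from $[\mathcal{L}^{[k]},\mathcal{L}^{[k]}] \subseteq [\mathcal{L}^k,\mathcal{L}] = \mathcal{L}^{k+1}$) applied to $\mathcal{L}=\mathcal{R}^{[2]}$ to obtain $\dim \mathcal{R}^{[2]}/(\mathcal{R}^{[2]})^{k} \le \dim \mathcal{R}^{[2]}/(\mathcal{R}^{[2]})^{[k]} < \infty$.

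The main obstacle I anticipate is making sure the triangularizing chain produced by Theorem 2.5 actually has trivial intersection; maximality of the chain alone does not guarantee this in infinite dimensions, so one must exploit the fact, visible from the construction in the proof of Theorem 2.5, that the derived series sits inside the chain. Once that point is pinned down, everything else is a short inductive argument that mirrors the classical finite-dimensional proof that $[\mathfrak{r},\mathfrak{r}]$ is nilpotent whenever $\mathfrak{r}$ is solvable.
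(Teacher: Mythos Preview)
Your proposal is correct and follows essentially the same route as the paper. Both arguments apply the triangularization theorem for pro-solvable algebras to obtain a chain of ideals refining the derived series, observe that the one-dimensional quotients force $ad(\mathcal{R}^{[2]})(\mathcal{V}_i)\subseteq\mathcal{V}_{i+1}$ (the paper phrases this as a Jacobi-identity computation, you phrase it as a character vanishing on commutators, but these are the same computation), and then handle the finite-codimension condition via Proposition~1.3(iv) together with $\mathcal{L}^{[k]}\subseteq\mathcal{L}^{k}$. The only cosmetic difference is that the paper packages the residual-nilpotency step by citing Proposition~2.2, whereas you unfold that proposition's induction $(\mathcal{R}^{[2]})^{k}\subseteq\mathcal{V}_{k+1}$ explicitly; your explicit attention to $\bigcap_i\mathcal{V}_i=0$ is exactly the point that proposition needs as well.
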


\begin{proof} For the sake of convenience we denote by $\cal M$ the algebra $\cal R^{[2]}$. The equality $\bigl(\cal R^{[i]}\bigr)^{[j]}=\cal R^{[\,i+j-1\,]}$ for all $i, j \ge1$, implies $\cal M^{[j]}=\cal R^{[\,j+1\,]}.$ From Proposition \ref{prop1.10} we deduce that there exists a maximal chain of ideals $\{\cal V_i\}_{i\in \mathbb{N}}$ such that
$$\cal R =\cal V_1 \supset \cdots \supset \cal V_{i_2-1} \supset \cal M=\cal V_{i_2} \supset \dots \supset \dots \supset \{0\}.$$

Setting $\cal W_i=\cal V_{i_2+i-1}$,  we get $$\cal M=\cal W_{1} \supset \dots \supset \cal W_i \supset \cal W_{i+1} \supset \dots \supset \{0\}.$$
Since $dim(\cal W_i/ \cal W_{i+1})=1$, we have $\cal W_i = \langle e_i\rangle \oplus \cal W_{i+1}$ for some $e_i\in \cal W_i \setminus \cal W_{i+1}$.

Moreover, the embeddings
$ad(\cal R)(\cal W_i) \subseteq \cal W_i$ implies that for $x\in \cal R$ we have

\begin{equation}\label{eq1.1}
ad_x(\cal W_i)=[x,e_i]+\cal W_{i+1}=\lambda^i_x e_i+\cal W_{i+1}
\end{equation}

Let $x,y\in \cal R$, then by the Jacobi identity and \eqref{eq1.1} we derive
$$ad_{[x,y]}(\lambda e_i + \cal W_{i+1})=(ad_x ad_y - ad_yad_x)(\lambda e_i + \cal W_{i+1})=(\lambda^i_x \lambda^i_y - \lambda^i_y \lambda^i_x)e_i + \cal W_{i+1}=\cal W_{i+1}.
$$

Therefore, $\cal M$ is strictly triangularizable Lie algebra. Since $\cal R$ is pro-solvable, so is $\cal M$. By Proposition \ref{prop1.7}, it now suffices to prove that $dim (\cal M^i /\cal M^{i+1}) < \infty $ for any $i\geq 1.$ In fact, the embedding $\cal M^{[i]} \subseteq \cal M^{i}\subsetneq \cal M$ and $dim (\cal M / \cal M^{[i]}) < \infty$ imply that $dim (\cal M / \cal M^{i}) < \infty.$ Finally, by \eqref{eq1}, we conclude that $\cal M$ is pro-nilpotent.  \end{proof}

\begin{cor} Let $\cal R$ triangularizable Lie algebra.  Then $\cal R^{[2]}$ is strictly triangularizable Lie algebra.
\end{cor}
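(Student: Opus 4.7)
The plan is to recycle the first half of the proof of Proposition~\ref{bobik}, which already establishes strict triangularizability of $\cal R^{[2]}$ using only triangularizability of $ad(\cal R)$ and the Jacobi identity; pro-solvability of $\cal R$ entered that proof only later, in order to pass from strict triangularizability to pro-nilpotency. Here pro-solvability is dropped, so I need only the triangularization step.

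Concretely, I would fix a triangularizing chain $\cal R = \cal V_1 \supset \cal V_2 \supset \cdots \supset \{0\}$ of ideals of $\cal R$ with $\dim(\cal V_i/\cal V_{i+1}) = 1$ and $ad(\cal R)(\cal V_i) \subseteq \cal V_i$, and pick $e_i \in \cal V_i \setminus \cal V_{i+1}$. Each $ad_x$ acts on the one-dimensional quotient as a scalar, so $ad_x(e_i) \equiv \lambda_x^i e_i \pmod{\cal V_{i+1}}$ for some $\lambda_x^i \in \mathbb{C}$. Invoking $ad_{[x,y]} = [ad_x, ad_y]$ I then get
\[
ad_{[x,y]}(e_i) \equiv (\lambda_x^i \lambda_y^i - \lambda_y^i \lambda_x^i)\, e_i \equiv 0 \pmod{\cal V_{i+1}},
\]
and by linearity $ad(\cal R^{[2]})(\cal V_i) \subseteq \cal V_{i+1}$ for every $i$.

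To convert this into strict triangularizability of $\cal R^{[2]}$ as required by Definition~\ref{defn2.8}(iii), I would restrict the chain to $\cal R^{[2]}$ by intersection: set $\cal V_i' := \cal V_i \cap \cal R^{[2]}$. These are ideals of $\cal R^{[2]}$ (indeed of $\cal R$) with $\cal V_1' = \cal R^{[2]}$, $\bigcap_i \cal V_i' = \{0\}$, $\dim(\cal V_i'/\cal V_{i+1}') \leq 1$ (the quotient embeds into $\cal V_i/\cal V_{i+1}$), and $ad(\cal R^{[2]})(\cal V_i') \subseteq \cal V_{i+1} \cap \cal R^{[2]} = \cal V_{i+1}'$. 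Discarding consecutive duplicates yields a chain with genuinely one-dimensional quotients, which is precisely a strict triangularization of $\cal R^{[2]}$. An alternative, if one prefers to avoid the intersection, is to invoke the Zorn-style refinement from Proposition~\ref{Engelpart1}(i) to arrange at the outset that $\cal R^{[2]}$ appears as one of the $\cal V_i$ and then work on the tail.

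The only genuine bookkeeping lies in the last step, ensuring that the restricted chain retains unit-dimensional quotients after compression. The substantive content, namely the Jacobi identity collapsing products of scalars on one-dimensional quotients to zero, is already isolated in Proposition~\ref{bobik} and poses no new obstacle.
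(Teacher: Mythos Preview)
Your proposal is correct and follows essentially the same route as the paper: the corollary is meant to be read off from the first half of the proof of Proposition~\ref{bobik}, where the Jacobi identity collapses the scalar actions on one-dimensional quotients so that $ad(\cal R^{[2]})(\cal V_i)\subseteq \cal V_{i+1}$. The only difference is cosmetic: the paper arranges from the outset that $\cal R^{[2]}$ occurs as one of the $\cal V_i$ and then takes the tail, whereas your primary version intersects and compresses (and you correctly note the tail variant as an alternative); both produce the required chain on $\cal R^{[2]}$.
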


\subsection{Analogue of Engel's theorem}
Let $\cal N$ be a pro-nilpotent Lie algebra. Consider a natural filtration $\{\cal N^i\}_{i\in \mathbb{N}}$ and the finite basis $\cal B_i$ that is preimage of basis the quotient spaces $\cal N^i/ \cal N^{i+1}$. Then we claim that
\begin{equation}\label{eq3.1}
\cal N= \bigoplus\limits_{i=1}^{\infty}Span\{\cal B_i\} \quad \mbox{with the basis} \quad \cal B=\bigcup\limits_{i=1}^{\infty}\cal B_i.
\end{equation}
The linearly independency of elements of $\cal B$ follows from their construction. For the sake of convenience, we set $\cal M=\bigoplus\limits_{i=1}^{\infty}Span\{\cal B_i\}.$
It is clear that  $\cal M \subseteq\mathcal{N}$.  Taking an arbitrary $x\in\mathcal{N}$, one can write
$x = m + x'$ for some $m\in \cal M$ and $x'\notin \cal M.$ The later leads to $x'\in \bigcap\limits_{k=1}^\infty \mathcal{N}^k=0$. Consequently, $\cal M=\cal N$, which implies that $\cal B$ is a basis of $\cal N$ compatible with the lower central series filtration.

In addition, from $\cal N^{i+1}=[\cal N^i, \cal N]$ and $\dim(\cal N/\mathcal{N}^i)<\infty$, we derive the equality
\begin{equation}\label{eq2.2}
[\cal B_i, \cal B_1] =Span\{\cal B_{i+1}\} +\cal N^{i+2}
\end{equation}
for any natural $i$. Therefore, one can take $card(\cal B_{i+1})$ linearly independent elements of the form $[a,b], a\in \cal B_i, b\in \cal B_1$ that lie in $\cal N^{i+1} \setminus \cal N^{i+2}.$ Replacing the elements of $\cal B_{i+1}$ to these elements one can assume that any element of $\cal B_i, i\geq 2$ is a right-normed word of alphabet $\cal B_1$. We call this basis as a {\it natural basis of $\cal N$.}

Let $d$ be a derivation of $\cal N$. Then the matrix form of $d$ on the basis of $\cal N$, which agreed with the natural filtration is given as follows:
\begin{equation}\label{eq222}
D=\begin{pmatrix}
D_{11} & D_{12} & D_{13} & \cdots \\
0 & D_{22} & D_{23} & \cdots \\
0 & 0 & D_{33} & \cdots \\
\vdots & \vdots & \vdots & \ddots
\end{pmatrix}
\end{equation}
here the block $D_{i,j}, i\leq j$ is the matrix representation of the operator $d_{i,j},$ which is restriction and projection of $d$ on subspaces $Span\{\cal B_i\}$ and $Span\{\cal B_j\}$, respectively,

\begin{prop}\label{prop4.1} The derivation $d$ is strictly triangularizable if and only if $d_{11}$ is strictly triangularizable.
\end{prop}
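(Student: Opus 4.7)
The plan is to prove both directions separately. Since $d_{11}$ acts on the finite-dimensional space $\Span\{\cal B_1\}\cong \cal N/\cal N^2$, strict triangularizability of $d_{11}$ is equivalent to its nilpotency, so the statement amounts to the claim that $d$ is strictly triangularizable precisely when $d_{11}$ is nilpotent.

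For the reverse implication, assume $d_{11}^N=0$. Because the elements of $\cal B_k$ for $k\ge 2$ are right-normed brackets of letters from $\cal B_1$, the derivation identity applied modulo $\cal N^{k+1}$ yields
\[
d_{kk}\big([a_1,\ldots,a_k]\big)\equiv \sum_{j=1}^{k}[a_1,\ldots,d_{11}(a_j),\ldots,a_k]\pmod{\cal N^{k+1}},
\]
so a pigeonhole count gives $d_{kk}^{kN}=0$ and every diagonal block is nilpotent on the finite-dimensional $\Span\{\cal B_k\}$. We then choose for each $k$ a basis of $\Span\{\cal B_k\}$ in which $d_{kk}$ is strictly upper triangular and concatenate these bases in the order $k=1,2,3,\ldots$, producing a basis $\{f_i\}$ of $\cal N$. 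In this basis $d$ is strictly upper triangular: the diagonal blocks are so by construction, while off-diagonal blocks $D_{jk}$ with $j>k$ send $\Span\{\cal B_k\}$ into $\cal N^{j}\subseteq\cal N^{k+1}$, which sits strictly further down in the ordering. Setting $\cal V_i=\Span\{f_i,f_{i+1},\ldots\}$ yields the required chain with $d(\cal V_i)\subseteq\cal V_{i+1}$ and $\bigcap_i\cal V_i=\{0\}$.

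For the forward implication, fix a strictly triangularizing chain $\cal V_1\supset\cal V_2\supset\cdots$ for $d$. First, $d$ has no nonzero eigenvalue: if $d(v)=\mu v$ with $\mu\ne 0$ and $v\ne 0$, picking the largest $i_0$ with $v\in\cal V_{i_0}$ (which exists since $\bigcap_i\cal V_i=\{0\}$) gives $\mu v=d(v)\in\cal V_{i_0+1}$, forcing $v\in\cal V_{i_0+1}$---a contradiction. Next, project via $\pi:\cal N\to \cal N/\cal N^2$: the decreasing chain $\bar{\cal V}_i=\pi(\cal V_i)$ of $d_{11}$-invariant subspaces of the finite-dimensional $\cal N/\cal N^2$ satisfies $d_{11}(\bar{\cal V}_i)\subseteq\bar{\cal V}_{i+1}$, hence $d_{11}^m(\cal N/\cal N^2)\subseteq\bar{\cal V}_{m+1}$. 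The chain stabilizes at $W_\infty=\bigcap_i\bar{\cal V}_i$; when $W_\infty=\{0\}$ we are done immediately.

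The hardest step will be ruling out $W_\infty\ne\{0\}$, where the derivation property of $d$ is essential (the projection of a strict triangularization chain need not itself be strict triang in general). If $d_{11}|_{W_\infty}$ had a nonzero eigenvalue $\lambda$, the plan is to exploit Leibniz propagation to the graded pieces $\cal N^k/\cal N^{k+1}$---whose $d_{kk}$-eigenvalues are $k$-fold sums of those of $d_{11}$---and, after a suitable extremality choice of $\lambda$ that avoids the resonances $\lambda=\mu_{i_1}+\cdots+\mu_{i_k}$ with $k\ge 2$, to invert $d-\lambda$ layer by layer on $\cal N^2$ and build a genuine eigenvector $v\in \cal N$ with $d(v)=\lambda v$; this contradicts the no-nonzero-eigenvalue property and forces $d_{11}|_{W_\infty}$ to be nilpotent. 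Combined with $d_{11}^m(\cal N/\cal N^2)\subseteq W_\infty$, this gives nilpotency of $d_{11}$. The principal technical delicacy is ensuring that this layer-by-layer construction yields an element of $\cal N$ itself (in its Hamel basis), rather than merely an element of the completion $\varprojlim \cal N/\cal N^k$.
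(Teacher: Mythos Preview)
Your reverse implication is sound and parallels the paper's argument: both use the Leibniz rule to propagate nilpotency from $d_{11}$ to every diagonal block $d_{kk}$, and then assemble a strictly triangularizing chain for $d$ by refining the natural filtration $\{\cal N^k\}$ block by block. The paper carries this out via an explicit linear order on bracket monomials in the generators; your pigeonhole bound $d_{kk}^{\,kN}=0$ together with a nilpotent-adapted basis of each $\Span\{\cal B_k\}$ achieves the same end.

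Your forward implication has a genuine gap, and you flag it yourself. The layer-by-layer inversion of $d-\lambda$ on $\cal N^2$ produces only a compatible family in the completion $\varprojlim \cal N/\cal N^k$, not an element of $\cal N$ in its Hamel basis; since the proposition is stated for an arbitrary pro-nilpotent $\cal N$ (not assumed complete), the contradiction with your Step~1 never materializes. The ``extremality'' device for choosing $\lambda$ so as to avoid all resonances $\lambda=\mu_{i_1}+\cdots+\mu_{i_k}$ with $k\ge2$ also cannot be made to work in general: if $0$ lies in the spectrum of $d_{11}$ then every eigenvalue satisfies $\lambda=\lambda+0$, and if both $\mu$ and $-\mu$ occur then $\lambda=\lambda+\mu+(-\mu)$ is a $3$-fold sum, so no single choice of $\lambda$ dodges all $k$-fold sums. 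As written, the forward direction does not close.

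The paper's route for this direction is entirely different and sidesteps both obstacles. It takes a strictly triangularizing chain $\{\cal V_i\}$ for $d$ that \emph{refines} the natural filtration $\{\cal N^k\}$; the terms lying between $\cal N$ and $\cal N^2$ then have the form $\cal V_j=\cal W_j\oplus\cal N^2$, and from $d(\cal V_j)\subseteq\cal V_{j+1}$ one reads off $d_{11}(\cal W_j)\subseteq\cal W_{j+1}$ modulo $\cal N^2$, i.e.\ a strictly triangularizing flag for $d_{11}$ directly---no spectral analysis, no completeness. You should rework your forward direction along these lines: the key missing idea is to work with a chain adapted to $\{\cal N^k\}$ from the outset rather than projecting an arbitrary chain and then trying to repair the information lost in the projection.
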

\begin{proof} Let $d$ is strictly triangularizable. Then there exists a maximal chain $\{\cal V_i\}_{i\in \mathbb{N}}$ for $d,$ which refined the natural filtration, then for terms of the chain that contain $\cal N^2$ one can write $\cal V_j=\cal W_j \oplus \cal N^2$ such that $d(\cal V_j)\subseteq \cal V_{j+1},$ which induces a maximal chain $\cal W_j$ for $d_{11}$ such that
$$d_{11}(\cal W_j)\subseteq \cal W_{j+1} \quad \mbox{modulo} \quad \cal N^2,$$
which means that $d_{11}$ is strictly triangularizable.

Suppose that $d_{11}$ is strictly triangularizable. Using a natural filtration $\{\cal N^i\}_{i\in \mathbb{N}}$, we have
$\cal N= \bigoplus\limits_{i=1}^{\infty}Span\{\cal B_i\}$ with the basis $\cal B=\bigcup\limits_{i=1}^{\infty}\cal B_i$. We prove that there exist  basis vectors $\{e_1^{(i)}, \dots , e_{n_i}^{(i)}\}$ on $Span\{\cal B_i\}$ such that $ad_{\cal N}$ is strictly triangularizable. First, we choose basis elements $\{e_1^{(1)}, \dots , e_{n_1}^{(1)}\}$ such that $d_{11}$ is strictly triangularizable. Due to \eqref{eq2.2} we consider define a linear order on the products of already obtained basis elements $\{e_1^{(i)}, \dots, e_{n_i}^{(i)}\}$ to $\{e_1^{(1)}, \dots, e_{n_1}^{(1)}\}$ in the following way:

$$e_l^{(i)}e_k^{(1)}  \preceq e_{l'}^{(i)}e_{k'}^{(1)}$$
if either $k+l<k'+l'$ or $k+l=k'+l'$, $k \leq k'$ with increasing index.

In this linear ordered set we choose increasing linear independent elements denoted by $\{e_1^{(i+1)}, \dots, e_{n_{i+1}}^{(i+1)}\}$ that forms a basis of $Span\{\cal B_{i+1}\}.$

Assume that for a given $i\geq 1$, $d$ is strictly triangularizable in the basis vectors
$\{e_1^{(i)}, \dots , e_{n_i}^{(i)}\}$ on $Span\{\cal B_i\}$.
Setting $U_s^{(i)}=\langle e_s^{(i)}, \dots, e_{n_i}^{(i)}\rangle$, we have
$$d(e_t^{(i+1)})=d([e_l^{(i)},e_k^{(1)}])=
[d(e_l^{(i)}),e_k^{(1)}] + [e_l^{(i)},d(e_k^{(1)})] \subseteq
[U_{l+1}^{(i)}, e_k^{(1)}] + [e_l^{(i)}, U_{k+1}^{(1)}] \subseteq U_{t+1}^{(i+1)}.$$

Putting
$\cal V_{n_1+\dots n_{k-2}+s}=\cal N^{k}\oplus U_s^{k-1}$ we obtain the maximal chain in which $d$ is strictly triangularizable.
\end{proof}

\begin{rem}
It should be noted that in finite dimensional case (see the proof of Theorem 1 in \cite{Snobl}) the statement that $D_{11}$ nilpotent if and only if $D$ is nilpotent the crucial role played the following formula:
\begin{equation}\label{eqder}
d^{r}([x, y])=\sum_{i=0}^{r}\begin{pmatrix} r\\i\end{pmatrix}[d^{i}(x),d^{r-i}(y)], \quad x,y\in \mathcal{L}.
\end{equation}

However, the proof of the analogue statement in terms of strictly triangularizability can not be obtained by using  Equality \eqref{eqder} because of the nilpotency of $D_{11}$ implies the nilpotency of each $D_{ii}$ with increasing nilindices. That is why in Proposition \ref{prop4.1} we have used different approach, which is also hold for finite dimensional case.
\end{rem}

Recall, a linear transformation $T$ of a  vector space $\cal L$ is called {\it diagonalizable}, if $\cal V$ admits a basis $\cal B$ such that every element of $\cal B$ is an eigenvector for $T$.

Taking into account the existence of natural basis as a consequence of Proposition \ref{prop4.1} we have the following result.

\begin{cor} A derivation of a pro-nilpotent Lie algebra is diagonalizable if and only if $d_{11}$ is diagonalizable.
\end{cor}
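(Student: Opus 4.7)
The plan is to adapt the proof of Proposition~\ref{prop4.1} by replacing ``strictly triangularizable'' with ``diagonalizable'' and leveraging the Leibniz rule to propagate eigenvectors through the natural basis construction.

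For the ``only if'' direction, I would use that $\cal N^2$ is a characteristic ideal and hence $d$-invariant; since a diagonalizable operator that preserves a subspace induces diagonalizable operators on the subspace and on the quotient, the induced action of $d$ on $\cal N/\cal N^2$ is diagonalizable. Identifying $\cal N/\cal N^2$ with $\Span\{\cal B_1\}$ via the splitting coming from the natural basis, this induced operator is exactly $d_{11}$, which is therefore diagonalizable.

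For the ``if'' direction, I would argue as in the second half of the proof of Proposition~\ref{prop4.1}. Assuming $d_{11}$ is diagonalizable and (after modifying the natural basis if necessary) choosing $\{e_1^{(1)},\dots,e_{n_1}^{(1)}\}\subset \Span\{\cal B_1\}$ so that $d(e_j^{(1)})=\lambda_j e_j^{(1)}$ holds as equalities in $\cal N$ (not merely modulo $\cal N^2$), I would define the higher basis vectors $e_t^{(i+1)}\in\Span\{\cal B_{i+1}\}$ as suitable right-normed brackets $[e_l^{(i)},e_k^{(1)}]$ of already-constructed basis vectors, exactly as in Proposition~\ref{prop4.1}. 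The derivation property then yields
$$
d\bigl([e_l^{(i)},e_k^{(1)}]\bigr)=[d(e_l^{(i)}),e_k^{(1)}]+[e_l^{(i)},d(e_k^{(1)})]=\bigl(\lambda_l^{(i)}+\lambda_k^{(1)}\bigr)\,[e_l^{(i)},e_k^{(1)}],
$$
so every new basis vector is again an eigenvector of $d$, with eigenvalue equal to the sum of the eigenvalues of the degree-one letters appearing in its right-normed word. Induction on $i$ produces a natural basis of $\cal N$ consisting of eigenvectors of $d$, whence $d$ is diagonalizable.

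The main obstacle is precisely the lifting step at the first level: diagonalizability of $d_{11}$ only provides eigenvectors in the quotient $\cal N/\cal N^2$, so a representative $e\in\cal N$ satisfies merely $d(e)=\lambda e+v$ for some $v\in\cal N^2$. Promoting $e$ to a genuine eigenvector $e+w$ of $d$ requires solving $(d-\lambda\,\mathrm{id})(w)=-v$ for some $w\in\cal N^2$, a compatibility one must secure in order for the natural basis construction above to actually produce honest eigenvectors. Once this step is handled, the remainder of the proof is a routine induction on filtration degree via the Leibniz rule, essentially identical to the corresponding induction in Proposition~\ref{prop4.1}.
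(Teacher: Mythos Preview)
Your ``only if'' direction is correct. In the ``if'' direction you have put your finger on exactly the right issue, and it is not a mere technicality: the lifting equation $(d-\lambda\,\mathrm{id})(w)=-v$ with $w\in\mathcal{N}^2$ is genuinely obstructed. Take $\mathcal{N}=\mathrm{m}_1$ from Example~\ref{examples} and $d=\operatorname{ad}_{e_1}$. Then $d(e_1)=0$ and $d(e_i)=e_{i+1}$ for $i\ge2$, so $d_{11}$ is the zero map on $\mathcal{N}/\mathcal{N}^2=\Span\{\bar e_1,\bar e_2\}$, hence diagonalizable. Yet $\ker d=\langle e_1\rangle$ while $0$ is the only eigenvalue of $d$, so $d$ is not diagonalizable; concretely, your lifting equation $d(w)=-e_3$ has no solution $w\in\mathcal{N}^2$ because $d(\mathcal{N}^2)\subseteq\mathcal{N}^3$. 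Thus the step you flagged as ``the main obstacle'' cannot be secured in general.

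The paper offers no proof beyond the sentence invoking the natural basis of Proposition~\ref{prop4.1}. That construction shows only that once $\mathcal{B}_1$ is chosen so that $D_{11}$ is diagonal, each diagonal block $D_{ii}$ in the resulting natural basis is diagonal (its entries being sums of eigenvalues of $D_{11}$); it says nothing about the off-diagonal blocks $D_{ij}$ with $i<j$, and an upper-triangular operator need not be diagonalizable. So the gap you isolated is real, and the ``if'' direction of the corollary fails as stated.
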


In the theory of finite-dimensional Lie algebras it is well-known Engel's theorem, which states that a Lie algebra $\cal L$ is nilpotent if and only if $ad_x$ is nilpotent  for any $x\in \cal L$. However, this theorem is not true for infinite dimensional Lie algebras, in general.

\begin{exam} Let $\cal L$  be a space of all strictly upper triangular infinite matrices $M=(m_{i,j})_{i,j\in \mathbb{N}}$ with rows and columns indexed by the natural numbers and with only finitely many non-zero entries $m_{i,j}$ in some ring (due to the finiteness condition, the commutator is well-defined). This commutator Lie algebra is infinite dimensional and it is not nilpotent, but $ad_x$ is nilpotent for each element $x\in M$.
\end{exam}

Below we present an analogue of Engel's theorem for pro-nilpotent Lie algebras.
\begin{thm} \label{Engel's theorem}

$\cal L$ is a pro-nilpotent Lie algebra if and only if $0 < dim (\cal L^i / \cal L^{i+1}) < \infty, \ i\in \mathbb{N}$ and for any $x\in \cal L$ the operators $ad_x$ is strictly triangularizable.
\end{thm}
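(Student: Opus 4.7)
The proof is an equivalence. For the forward direction, if $\cal L$ is pro-nilpotent then the isomorphism \eqref{eq1} converts the finiteness of $\cal L/\cal L^{i+1}$ into that of $\cal L^i/\cal L^{i+1}$; positivity follows because $\dim(\cal L^{i_0}/\cal L^{i_0+1}) = 0$ would stabilize the lower central series, giving $\bigcap_j \cal L^j = \cal L^{i_0}$, and residual nilpotency would then force $\cal L^{i_0} = 0$ and hence $\cal L$ finite-dimensional nilpotent---excluded by our infinite-dimensional setting. Strict triangularizability of each $ad_x$ is then inherited from the common strictly triangularizing chain of ideals produced by Proposition \ref{Engelpart1}(i): the chain furnished there works simultaneously for every element of $ad(\cal L)$.

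For the reverse direction, \eqref{eq1} again provides $\dim(\cal L/\cal L^i)<\infty$, so the substantive task is proving $\cal I := \bigcap_{i\ge 1}\cal L^i = 0$. My plan is to upgrade the pointwise strict triangularizability into a common strictly triangularizing chain of ideals refining $\{\cal L^i\}$. The key remark is that any subspace $W$ with $\cal L^{i+1}\subseteq W\subseteq \cal L^i$ is automatically an ideal of $\cal L$ (since $[\cal L,W]\subseteq \cal L^{i+1}\subseteq W$), and $\cal L^i/\cal L^{i+1}$ is finite-dimensional by hypothesis; so the LCS refines to a chain $\cal L=\cal V_1\supset \cal V_2\supset \cdots$ of ideals with $\dim(\cal V_j/\cal V_{j+1})=1$, along which the entire family $ad(\cal L)$ strictly triangularizes via $ad(\cal L)(\cal V_j)\subseteq \cal L^{i+1}\subseteq \cal V_{j+1}$. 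The intersection of this chain is exactly $\cal I$. To force $\cal I = 0$ I would run a Zorn-type maximality argument in the style of Proposition \ref{Engelpart1}(i): strict triangularizability of $ad_z$ for $0\ne z\in\cal I$ rules out nonzero eigenvalues for $ad_z$ on $\cal L$ (any nonzero $ad_z$-eigenvector $v$ with nonzero eigenvalue would satisfy $v\in \cal W_k$ for every term of $ad_z$'s chain, hence $v\in\bigcap_k\cal W_k=\{0\}$); moreover $z\in\cal L^i$ for all $i$ gives $ad_z(\cal L)\subseteq \cal I$. Combining these with the fact that $\cal I$ is an ideal, one produces a proper $ad(\cal L)$-invariant refinement of the chain strictly inside $\cal I$, contradicting maximality unless $\cal I=0$.

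The principal obstacle is this final refinement inside $\cal I$. It amounts to an infinite-dimensional Engel/Levitzki-type statement: a family of jointly ``strictly triangularizable'' operators admits a common invariant flag of ideals with one-dimensional factors descending to zero. The pointwise information---each $ad_x$ has only the zero eigenvalue on $\cal L$---supplies half of what is needed, but assembling it collectively on the potentially large ideal $\cal I$, so as to produce an honest $ad(\cal L)$-invariant one-dimensional quotient out of a nonzero $\cal I$, requires care, since the classical finite-dimensional proof of common null vectors for a semigroup of nilpotent operators does not transfer verbatim to the pro-setting.
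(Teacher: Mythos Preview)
Your forward direction is correct and essentially matches the paper.

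For the reverse direction, you have correctly isolated the crux---showing $\mathcal{I}:=\bigcap_i\mathcal{L}^i=0$---and you are right that your chain above $\mathcal{I}$ comes for free from $[\mathcal{L},\mathcal{L}^i]\subseteq\mathcal{L}^{i+1}$ and does not use the hypothesis at all. But your attack on $\mathcal{I}$ itself is a genuine gap, as you acknowledge: knowing that each $\mathrm{ad}_z$ has no nonzero eigenvalues and that $\mathrm{ad}_z(\mathcal{L})\subseteq\mathcal{I}$ does not by itself produce an $\mathrm{ad}(\mathcal{L})$-invariant hyperplane of $\mathcal{I}$. That would require a common annihilated vector for an infinite family of operators on a possibly infinite-dimensional space, which is exactly the infinite-dimensional Engel step whose failure you flag. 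Nothing in your sketch closes this.

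The paper sidesteps the obstacle by a different mechanism. Rather than working inside $\mathcal{I}$, it passes to the \emph{first block} $\mathcal{L}/\mathcal{L}^2$. For each $x$, the induced operator $d_{11}^x$ is nilpotent there (strict triangularizability on a finite-dimensional space), and---this is the point you are missing---the operators $d_{11}^{e_i}$ for $e_i$ in a basis $\mathcal{B}_1$ of $\mathcal{L}/\mathcal{L}^2$ \emph{pairwise commute}, since $[d_{11}^{e_i},d_{11}^{e_j}]=d_{11}^{[e_i,e_j]}$ and $[e_i,e_j]\in\mathcal{L}^2$ acts trivially on the first block. Finite-dimensional linear algebra then gives a single basis of $\mathrm{Span}\{\mathcal{B}_1\}$ in which all $d_{11}^x$ are strictly upper triangular, and the recursive construction of Proposition~\ref{prop4.1} propagates this to a common strictly triangularizing chain for the whole family $\mathrm{ad}(\mathcal{L})$. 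Proposition~\ref{prop1.7} then yields residual nilpotency, and the finiteness hypothesis on the quotients gives pro-nilpotency. The reduction to a finite commuting family on $\mathcal{L}/\mathcal{L}^2$ is precisely what converts the intractable joint problem into a classical one.
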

\begin{proof}  {\bf Sufficiency:} the condition $dim (\cal L^i / \cal L^{i+1}) > 0$ for any $i\in \mathbb{N}$ guarantee the existence of basis $\cal B = \bigcup\limits_{i=1}^{\infty} \cal B_i$ that agreed with the natural filtration. Since for any $x\in \cal L$ operator $ad_x$ is strictly triangularizable, due to Proposition \ref{prop4.1}, it is equivalent that $d_{11}^x$ is strictly triangularizable.

Note that for $x\in \cal L^2$ we have $d_{11}^x=0.$ Therefore, it  suffices to consider $d_{11}^x$ for $x\in Span\{\cal B_1\}.$ Set $\cal B_1=\{e_1, \dots, e_k\}$. Then from $[ad_{e_i}, ad_{e_j}]=ad_{[e_i, e_j]}$ we conclude that $d_{11}^{[e_i, e_j]}=[d_{11}^{e_i}, d_{11}^{e_j}]=0.$ Thus, we get a finite set of pairwise commuting strictly triangularizable operators. From linear algebra, there exists a basis for $Span\{\cal B_1\}$ in which all these operators are simultaneously strictly triangularizable.

 We now utilize the basis vectors introduced in Proposition \ref{prop4.1}, arranged according to the specified linear order. With respect to this basis, it follows that all inner derivations of $\cal L$ are strictly triangularizable. Now, the sufficiency follows from Proposition \ref{prop1.7}.

{\bf The necessity} follows from Proposition \ref{Engelpart1}.
\end{proof}

\section{Maximal torus on pro-nilpotent Lie algebras}

In this section, we adapt classical results on maximal tori to the pro-nilpotent algebras setting  and highlight the new difficulties specific to infinite dimensions.
\begin{defn} A torus on a Lie algebra $\mathcal{N}$ is a commutative subalgebra of $Der(\mathcal{N}),$ consisting of diagonalizable endomorphisms. A torus is said to be maximal if it is not strictly contained in any other torus. We denote by $\mathcal{T}$ a maximal torus on a Lie algebra $\mathcal{N}$.
\end{defn}

Let $\cal N$ be a pro-nilpotent Lie algebra.
Then applying for the characteristic  ideal $\cal N^2$ of $\cal N$ the arguments as in Proposition 3 in \cite{Leger} we derive that dimension of a maximal torus is finite. More, exactly it is bounded from the above by $dim\big(\cal N / \cal N^2\big)$, that is, $rank (\cal N) \leq \dim (\cal N / \cal N^2)$. To avoid confusion in infinite dimensions, we shall expand upon the arguments.

For a given element $t$ of a maximal torus $\cal T$ on $\cal N$ we consider induced derivation
$\bar{t} : \ \cal N/ \cal N^2 \rightarrow \ \cal N/ \cal N^2.$ This defines homomorphism $\varphi$ from maximal torus $\cal T$ on $\cal N$ to a maximal torus $\bar{\cal T}$ on $\cal N/ \cal N^2.$

Let us show that $ker(\varphi)=0.$ Assume that $\varphi(t)=0$ for some $t\in \cal T.$ Since $t$ is diagonalizable, then there exists basis consisting of eigenvectors. Setting $\mathcal{N}_{\alpha}(t)=\{x \in \cal N \mid t(x)=\alpha x\},$ we have decomposition
$$\cal N=\bigoplus\limits_{\alpha\in Spec(t)}\left(\mathcal{U}_{\alpha}(t) \oplus \left(\mathcal{N}_{\alpha}(t) \cap \cal N^2\right)\right).$$

The condition $t\in Ker(\varphi)$ implies that $t(\cal N)\subseteq \cal N^2$, which means $t(\cal U_{\alpha})=0$ for $\alpha\in spec(t)$.  Taking into account that
$Span\{\cal B_1\}=\bigoplus\limits_{\alpha\in Spec(t)}\mathcal{U}_{\alpha}(t),$ we derive $t=0.$
In turn out that $\dim \cal T$ is bounded by dimension of a maximal torus on $\cal N/ \cal N^2$.

Assume $\cal T=Span\{t_1, \dots, t_s\}$. Since any finite set of commuting diagonalizable operators in $Der(\cal N)$ is simultaneously diagonalizable (see Corollary 4.15 in \cite{InfinDiag}). Therefore, we can assume that the pro-nilpotent Lie algebra $\cal N$ admits a basis such that any element of a torus on $\cal N$ has an infinite-dimensional diagonal matrix form.

Let $\cal N$ be a pro-nilpotent Lie algebra with countable Hamel basis $\cal B=\{e_i \ | \ i\in \mathbb{N}\}$ and the multiplications table of $\cal N$ is determined by laws on products of basic elements:
\begin{equation}\label{eqtable}
[e_i,e_j]=\sum\limits_{p\in \cal I_{i,j}}^{}\gamma_{i,j}^pe_p, \quad i,j\in \mathbb{N},
\end{equation}
where subsets $\cal I_{i,j}$ of $\mathbb{N}$ are finite for each $i,j\in \cal N$.

For $i, j, p$ such that $\gamma_{i,j}^p\neq0$ we consider the system of the infinitely many linear equations
$$S_{\cal B}: \quad \left\{\alpha_{i}+\alpha_{j}=\alpha_{p},
\right.$$
in the variables $\{\alpha_i \ | \  i\in \mathbb{N}\}.$

Let $d=diag(\alpha_1, \alpha_2, \dots )$ be a diagonal transformation of $\cal N$. Then due to equalities
$$\sum\limits_{p\in \cal I_{i,j}}^{}\gamma_{i,j}^p\alpha_pe_p=d([e_i,e_i])=[d(e_i),e_j] + [e_i,d(e_j)]=(\alpha_i+\alpha_j)[e_i,e_j]=
(\alpha_i+\alpha_j)\sum\limits_{p\in \cal I_{i,j}}^{}\gamma_{i,j}^pe_p,$$
we obtain that $d$ is a derivation of $\cal N$ if and only if $\alpha_i$  are solutions of the system $S_{\cal B}$.

Following the paper \cite{Leger} we denote by $r\{\cal B\}$ the rank of the system $S_{\cal B}$ (the dimension of the fundamental system of solutions). Pro-nilpotency of $\cal N$ guarantee that the rank of $S_{\cal B}$ is finite. Setting
$r\{\cal N\}=\max r\{\cal B\}$ as $\{\cal B\}$ runs over all bases of $\cal N$. Similarly to the finite-dimensional case (see \cite{Leger}) for a pro-nilpotent Lie algebra $\cal N$ over an algebraically closed field the equality $dim \mathcal{T}=r\{\cal N\}$ holds true.

We denote the free parameters in the solutions of the system $S_{\cal B}$ by $\alpha_{i_1},\dots, \alpha_{i_s}$. Making renumeration the basis elements of $\mathcal{N}$ we can assume that $\alpha_{1},\dots, \alpha_{s}$ are free parameters of $S_{\cal B}$. Then we get
\begin{equation}\label{eq3.6}
\alpha_{i}=\sum\limits_{t=1}^{s}\lambda_{i,j}\alpha_{t},\ i\geq s+1.
\end{equation}

Set the basis $\{\alpha_i=(\alpha_{1,i}, \alpha_{2,i},  \dots) \ |  \ 1\leq i\leq s\}$ of fundamental solutions of the system $S_{\cal B}$. Then the diagonal matrices $\diag(\alpha_i)$ form a basis of a maximal torus of $\cal N$.

Now, let $\cal N$ be  a pro-nilpotent Lie algebra with a  maximal torus $\cal T$. Suppose $\cal T$ is simultaneously diagonalizable in the basis $\cal B=\bigcup\limits_{i=1}^{\infty}\cal B_i$. Define $\cal W_k=\cal N / \cal N^{k+1}.$ Then $\cal W_k$ is a finite-dimensional nilpotent Lie algebra for any $k$. We denote by $\cal T_{k}$ a maximal torus of $\cal W_k$ in the basis $\bigcup\limits_{i=1}^{k} \cal B_i$.

The family of tori $\{\cal T_k\}_{k\geq k_0}$ is compatible in the following sense: for every $k\geq k_0$, the map $\cal T_{k+1}$ agrees with $\cal T_k$ when restricted via the natural projection $\cal W_k \to \cal W_{k+1}$. Then, under the consistency condition above, there exists a unique extension $\cal T$
such that for every $k$, the restriction of $\cal T$ to $\cal W_k$ coincides with $\cal T_k$. It is naturally regarded as the {\it limit} of the consistent system of transformations $\{\cal T_k\}$ on the inverse system of factor spaces $\cal W_k$ and we get  $\lim\limits_{k \rightarrow \infty} \cal T_{k}=\cal T$.

Consider a pro-nilpotent Lie algebra $\cal N$ and its two maximal tori $\cal T, \widetilde{\cal T}$. Note that $\widetilde{\cal T}$ is simultaneously diagonalizable in the basis $\widetilde{\cal B}=\bigcup\limits_{i=1}^{\infty} \widetilde{\cal B}_i$ and $\widetilde{\cal T}_{k}$ is a maximal torus of $\cal W_k$ in the basis $\bigcup\limits_{i=1}^{k} \widetilde{\cal B}_i.$

Then we have $\lim\limits_{k \rightarrow \infty} \cal T_{k}=\cal T$ and $\lim\limits_{k \rightarrow \infty} \widetilde{\cal T}_{k}=\widetilde{\cal T}.$ Taking into account that dimensions of tori $\cal T$ and $\widetilde{\cal T}$ are finite, we obtain the existence
$q_1$ and $q_2$ such that $\dim \cal T_{q_1} = \dim \cal T$ and $\dim \widetilde{\cal T}_{q_2} = \dim \widetilde{\cal T}.$ Taking $q=max\{q_1, q_2\}$ we get
$$\dim \cal T_{s} = \dim \cal T, \quad \dim \widetilde{\cal T}_{s} = \dim \widetilde{\cal T} \quad \mbox{for any} \quad s\geq q.$$

Now, applying the corollary of Mostow's result (see Theorem 4.1 in \cite{Mostow}) for a finite-dimensional nilpotent Lie algebra $\cal N$ which states that any two maximal tori on $\cal N$ are conjugate under an inner automorphism of $\cal N$, we deduce
$\dim \cal T_q =\dim \widetilde{\cal T}_q$. Consequently, $\dim \cal T=\dim \widetilde{\cal T}.$

Thus, we proved the following result.

\begin{thm} Let $\cal N$ be a pro-nilpotent Lie algebra and $\cal T$ be its a maximal torus. Then $\lim\limits_{k \rightarrow \infty} \cal T_{k}=\cal T,$ where $\cal T_{k}$ are maximal tori of algebras $\cal N/ \cal N^k.$ Moreover, dimensions of arbitrary two maximal tori are equal.
\end{thm}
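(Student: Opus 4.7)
The plan is to exploit the inverse‐limit presentation $\cal N\cong\lim\limits_{\xleftarrow{k}}\cal N/\cal N^k$ from Section~1 to reduce both assertions to the already known finite‐dimensional Mostow conjugacy theorem. The two claims — convergence $\cal T=\lim_{k\to\infty}\cal T_k$ and equality of dimensions of any two maximal tori — are really two sides of the same argument, so I would handle them together.

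First I would fix a maximal torus $\cal T$ on $\cal N$. Since $\cal T$ is finite-dimensional (bounded by $\dim(\cal N/\cal N^2)$) and consists of commuting diagonalizable derivations, the simultaneous diagonalization result (Corollary~4.15 in \cite{InfinDiag}) supplies a Hamel basis $\cal B=\bigcup_{i\ge 1}\cal B_i$ compatible with the filtration $\{\cal N^i\}$ in which every element of $\cal T$ is diagonal. Each $t\in\cal T$ preserves every characteristic ideal $\cal N^k$ and therefore induces a diagonalizable derivation $t^{(k)}$ on the finite-dimensional nilpotent quotient $\cal W_k=\cal N/\cal N^{k+1}$; the assignment $t\mapsto t^{(k)}$ is a Lie algebra homomorphism whose image $\pi_k(\cal T)$ is a torus of $\cal W_k$, and these images are compatible with the natural projections $\cal W_{k+1}\to\cal W_k$. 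I would let $\cal T_k$ denote any maximal torus on $\cal W_k$ containing $\pi_k(\cal T)$, chosen so that the family $\{\cal T_k\}$ is compatible under the projections; such a compatible enlargement exists because any diagonalizable derivation on $\cal W_{k+1}$ that projects to something in $\cal T_k$ lifts (after restricting to the basis already diagonalizing $\cal T$) to the inverse limit.

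Next I would prove the identification $\cal T=\lim_{k\to\infty}\cal T_k$. The inclusion $\pi_k(\cal T)\subseteq\cal T_k$ gives a canonical map $\cal T\hookrightarrow\lim\cal T_k$, injective because $\bigcap_k\cal N^k=0$ and derivations of a pro-nilpotent algebra are continuous (Section~1). Conversely, a compatible system $(t_k)\in\lim\cal T_k$ defines a well-defined linear map $\hat t$ on $\cal N$ via the isomorphism $\cal N\cong\lim\cal N/\cal N^k$; because each $t_k$ is a derivation, so is $\hat t$, and because each $t_k$ is diagonal in the finite truncation of $\cal B$ it is diagonal in $\cal B$. The resulting abelian algebra of diagonalizable derivations contains $\cal T$, so by maximality it equals $\cal T$. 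Since $\dim\cal T<\infty$, the sequence $\dim\cal T_k$ is nondecreasing and must stabilize: there exists $q_1$ with $\dim\cal T_k=\dim\cal T$ for all $k\ge q_1$.

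Finally, for the dimension statement let $\widetilde{\cal T}$ be a second maximal torus, repeat the construction to obtain tori $\widetilde{\cal T}_k$ on $\cal W_k$ with $\dim\widetilde{\cal T}_k=\dim\widetilde{\cal T}$ for $k\ge q_2$. Taking $q=\max\{q_1,q_2\}$, both $\cal T_q$ and $\widetilde{\cal T}_q$ are maximal tori on the finite-dimensional nilpotent Lie algebra $\cal W_q$, so by Mostow's theorem (Theorem~4.1 in \cite{Mostow}) they are conjugate under an inner automorphism and in particular have equal dimension. Hence $\dim\cal T=\dim\cal T_q=\dim\widetilde{\cal T}_q=\dim\widetilde{\cal T}$. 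The main subtle point I expect is the careful verification that the inverse limit of the $\cal T_k$ genuinely lives inside $\mathrm{Der}(\cal N)$ as a torus — that is, that simultaneous diagonalizability of compatible finite-dimensional truncations assembles into simultaneous diagonalizability of the limit in the Hamel basis $\cal B$, and that the limiting abelian algebra cannot be strictly larger than the maximal $\cal T$ we started with. Both steps rely essentially on continuity of derivations and on fixing the basis $\cal B$ before passing to quotients.
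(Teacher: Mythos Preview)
Your proposal is correct and follows essentially the same approach as the paper: fix a basis of $\cal N$ diagonalizing $\cal T$, pass to the finite-dimensional nilpotent quotients $\cal W_k=\cal N/\cal N^{k+1}$, take compatible maximal tori $\cal T_k$ there, identify $\cal T$ with the limit of the $\cal T_k$, observe that the dimensions stabilize because $\dim\cal T<\infty$, and then apply Mostow's conjugacy theorem at a sufficiently deep finite level to compare two maximal tori. Your write-up is in fact more explicit than the paper's about why the inverse limit of the $\cal T_k$ lands back inside $\Der(\cal N)$ and coincides with $\cal T$ by maximality, but the architecture of the argument is the same.
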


Similarly to the finite dimensional case, we call {\it rank of the algebra $\cal N$} (denoted further by $rank(\cal N)$) the dimension of its a maximal torus.

For a pro-nilpotent Lie algebra and a maximal torus, one can construct a residually solvable algebra.

\begin{exam} \label{exam3.11}

Let $\cal N$ be a pro-nilpotent Lie algebra and $\cal T$ a maximal torus on $\cal N$. Define the Lie algebra $\cal {R}_{\cal T} = \cal N \rtimes \cal T$, with the Lie bracket defined as follows:
$$
[x, t] = t(x), \quad [t, t'] = 0 \quad \text{for all } x \in \cal N \text{ and } t, t' \in \cal T.
$$
It follows that $\cal R_{\cal T}^2 \subseteq \cal N$. Since $\cal T$ is finite-dimensional, the quotient ${\cal R}_{\cal T} / {\cal R}_{\cal T}^2$ is also finite-dimensional.

Furthermore, we have the inclusion:
$$
\cal {R}_{\cal T}^{[i+1]} \subseteq \cal N^{[i]} \subseteq \cal N^{2^{i-1}},
$$
which implies that ${\cal R}_{\cal T}$ is a residually solvable Lie algebra whose pro-nilpotent radical is $\cal N$.
\end{exam}

Note that in general $[\cal N, \cal T] \subsetneq \cal N.$ For instance, the direct sum of a finite-dimensional nilpotent Lie algebra with zero root space and an arbitrary pro-nilpotent Lie algebra poses this property. Another example, which does not have structure of the direct sum of two algebras, of pro-nilpotent Lie algebra such that $[\cal N, \cal T] \subsetneq \cal N$ is given below.

\begin{exam} Consider the Lie algebra $\cal N=\mathfrak{m}_2\rtimes \cal N_8$, where $\mathfrak{m}_2=Span\{f_i \ | \ i\in \mathbb{N}\}$ and  $\cal N_8=Span\{e_1, \dots, e_8\}$, which is given by its table of multiplications
$$\left\{\begin{array}{llllllll}
[e_{1}, e_{2}] = e_{3},&[e_{1}, e_{3}] = e_{4},&[e_{1}, e_{4}] = e_{5},&[e_{1}, e_{6}] = e_{7},&[e_{1}, e_{8}] = e_{9},\\[1mm]
[e_{2}, e_{3}] = e_{8},&[e_{2}, e_{4}] = e_{9},&[e_{2}, e_{5}] = e_{9},&[e_{4}, e_{3}] = e_{9},\\[1mm]
[e_1,f_i]=f_{i+1},& i\geq 1,\\[1mm]
[f_1,f_i]=f_{i+2},& i\geq 2.\\[1mm]
\end{array}\right.$$
Then $\cal N$ is pro-nilpotent and
$\cal T=Span\{diag(0,\alpha,\alpha,\alpha,\alpha,\beta,\beta,
2\alpha,2\alpha,0,0,0,\dots)\ | \ \alpha, \beta \in \mathbb{C}\}$ is its a maximal torus. Therefore, we have $[\cal N, \cal T] \subsetneq \cal N$ and the algebra $\cal R_{\cal T}$ has the following multiplications table:
$$[\cal N,\cal N], \quad [e_i,x]=e_i, \ 2\le i\le 5, \quad [e_i,y]=e_i, \ 6\le i\le7, \quad [e_i,x]=2e_i, \ 8\le i\le 9.$$

By virtue of $\cal R_{\cal T}^{[3]}=\{0\}$, we conclude that
$\cal R_{\cal T}$ non pro-solvable.
\end{exam}


The works of \cite{Residual1} and \cite{Residual2}  provide examples of pro-nilpotent Lie algebras for which $[\cal N, \cal T]=\cal N$.

Moreover, if $\cal N$ is a pro-nilpotent Lie algebra that satisfies the conditions: $[\cal N, \cal T]=\cal N$ and $dim \big(\cal N/ \cal N^{[i]}\big) < \infty, \ i\geq 2$, then $\cal R_{\cal T}$ is pro-solvable Lie algebra. Indeed, from
$$\cal R_{\cal T} / {\cal R_{\cal T}^{[i+1]}}=
(\cal N\oplus \cal T) / \cal R_{\cal T}^{[i+1]}=
\cal N/ \cal R_{\cal T}^{[i+1]}\oplus \cal T=
\cal N / \cal N^{[i]}\oplus \cal T$$
and $dim \cal T<\infty$, we conclude that
$dim \big(\cal R_{\cal T}/{\cal R_{\cal T}^{[i]}}\big)<\infty$ for any $i\geq 2.$

Let $\cal T=\operatorname{Span}\{t_{1}, t_{2}, \dots, t_{s}\} $ be a maximal torus on a pro-nilpotent Lie algebra $\cal N$ and $\cal B$ a Hamel basis of $\cal N$ such that all $t_i, \ 1\leq i \leq s$ have diagonal matrix form with respect to this basis. Let $\alpha$ be an element of the dual space $\cal T^*$. Then $\alpha=\sum\limits_{i=1}^s\alpha_i t_i^*,$ where $t_i^*$ is the standard basis of $\cal T^*$ associated with the basis $t_i$ (that is $t_i^*(t_j)=\delta_{i,j}$). We set
$$ \cal N_{\alpha}=\{x\in \cal N \ | \ t(x)=\alpha(t)x \ \mbox{for any} \ t\in \cal T\}.$$

Obviously,
$\cal N_{\alpha}\cap \cal N_{\beta}=0.$ Indeed, if $x\in \cal N_{\alpha}\cap \cal N_{\beta}$ for $\alpha\neq \beta,$ then $t_i(x)=\alpha(t_i)x=\beta(t_i)x$ for any $1\leq i \leq s.$ Consequently, $x=0$.

Denote by $W=\left\{\alpha\in {\cal T}^\ast \ : \ \cal N_\alpha\neq 0\right\}$ the roots system of  $\cal N$  associated to  $\cal T.$ We have $\cal N \supseteq \mathcal{H}\{\bigoplus\limits_{\alpha\in \cal W}\cal N_{\alpha}\},$
where $x\in \mathcal{H}\{\bigoplus\limits_{\alpha\in \cal W}\cal N_{\alpha}\}$ means that $x$ is written as a finite sum of elements from $\mathcal{N}_{\alpha}, \alpha\in {\cal T}^\ast$.

Let $x=\sum\limits_{i=p}^{q}x_ie_i$ be an element of $\cal N.$ Then from
$t_i(e_j)=\alpha_j(t_i)e_j$ we have
$t_i(x)=\sum\limits_{j=p}^{q}\alpha_j(t_i)x_je_j.$ For those equal functionals $\alpha_j$, we conclude that $x_je_j$ lie in the same root space $\cal N_{\alpha_j}$. Hence,
$x\in \bigoplus\limits_{j=p'}^{q'}\cal N_{\alpha_j},$ which implies $\cal N=\mathcal{H}\{\bigoplus\limits_{\alpha\in \cal W}\cal N_{\alpha}\}.$

Note that $\cal T$ admits a basis $\{t_1, \dots, t_s\}$ which satisfies the condition
\begin{equation}\label{eq3.1.1}
t_i(x_{\alpha_j})=\alpha_{i,j}x_{\alpha_j}, 1\leq i,j\leq s,
\end{equation}
where $\alpha_{i,i}=1, \ \alpha_{i,j}=0$ for $i\neq j$ and $x_{\alpha_j}\in \cal N_{\alpha_j}.$

Denote by $\Psi=\{\alpha_1, \dots, \alpha_s, \alpha_{s+1}, \dots, \alpha_k\}$ the set of simple roots of $W$ such that any root subspace with index in $\Psi$ contains at least one generator element of $\cal N$ and by $\Psi_1=\left\{\alpha_1, \ldots, \alpha_s\right\}$ the set of primitive roots such that any non-primitive root can be expressed by a linear combination of them. In fact, for any root $\alpha\in W$ we have $\alpha=\sum\limits_{\alpha_i\in \Psi_1}p_i\alpha_i$ with $p_i\in \mathbb{Z}.$

Despite to finite-dimensional case, for the case of pro-nilpotent Lie algebra dimensions of root spaces not need to be finite. For instance,
for the direct sum of the characteristically pro-nilpotent Lie algebra from Example \ref{exam3.8} and the $n$-dimensional abelian Lie algebra with respect to a maximal torus we have root space decomposition
$$\cal N_{0}\oplus\cal N_{\alpha_1}\oplus \cdots \oplus \cal N_{\alpha_n} \quad \mbox{with} \quad \dim\cal N_{\alpha_i}=1 \quad \mbox{and} \quad \dim\cal N_{0}=\infty.$$

The result on conjugacy of two maximal tori on a finite-dimensional nilpotent Lie algebra $\cal N$ states that any two maximal tori are conjugated via an automorphism of the form $exp(d)$, where $d$ is a nilpotent derivation of $\cal N.$ However, in infinite dimensional case, we do not even  have the convergence of $exp(d)=\sum\limits_{n=0}^\infty \tfrac{1}{n!}d^n$ for an arbitrary derivation. The next result shows that in the case of pro-nilpotent complete Lie algebras this series is uniformly converges and it is an automorphism.

\begin{thm} Let $\cal N$ be a pro-nilpotent complete Lie algebra and $d$ be a derivation of $\cal N$ then the series $\sum\limits_{n=0}^\infty \tfrac{d^n(x)}{n!}$ uniformly convergent on $\cal N$. As usual, the limit function is denoted by $\exp{(d)}$. Moreover, $\exp(d)$ is an  automorphism of $\cal N.$
\end{thm}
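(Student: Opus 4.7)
The plan is to reduce the question to the finite-dimensional nilpotent quotients $\mathcal{N}/\mathcal{N}^k$, using the inverse-limit identification $\mathcal{N}\cong\lim\limits_{\xleftarrow{k}}(\mathcal{N}/\mathcal{N}^k)$ guaranteed by completeness. Since every derivation of a pro-nilpotent Lie algebra is continuous (Section~1), $d(\mathcal{N}^k)\subseteq\mathcal{N}^k$, so $d$ descends to a derivation $\bar d_k$ of the finite-dimensional nilpotent Lie algebra $\mathcal{N}/\mathcal{N}^k$. On each such quotient the classical matrix exponential $\exp(\bar d_k)=\sum_{n\ge 0}\bar d_k^{\,n}/n!$ converges absolutely in the usual sense and defines a linear automorphism.

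First I would check that these finite-dimensional exponentials are compatible with the structure maps $p_{k+1,k}\colon\mathcal{N}/\mathcal{N}^{k+1}\to\mathcal{N}/\mathcal{N}^k$: the intertwining $p_{k+1,k}\circ\bar d_{k+1}=\bar d_k\circ p_{k+1,k}$ passes to every power and hence to the entire series, giving $p_{k+1,k}\circ\exp(\bar d_{k+1})=\exp(\bar d_k)\circ p_{k+1,k}$. Completeness then produces a unique $\exp(d)\in\operatorname{End}(\mathcal{N})$ whose reduction modulo each $\mathcal{N}^k$ is $\exp(\bar d_k)$; this is exactly the required convergence, read as the statement that for every $x\in\mathcal{N}$ and every $k$ the partial sums $\sum_{n=0}^{N} d^n(x)/n!$ stabilize in $\mathcal{N}/\mathcal{N}^k$, uniformly in $x$ in the sense of the filtration topology.

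To verify that $\exp(d)$ is a Lie algebra homomorphism, I would again work level by level and invoke the Leibniz-type identity \eqref{eqder}. In the finite-dimensional $\mathcal{N}/\mathcal{N}^k$ the Cauchy product of absolutely convergent matrix series is legitimate and yields
\[
\exp(\bar d_k)([\bar x,\bar y])=\sum_{n\ge 0}\frac{1}{n!}\sum_{i=0}^{n}\binom{n}{i}[\bar d_k^{\,i}(\bar x),\bar d_k^{\,n-i}(\bar y)]=[\exp(\bar d_k)(\bar x),\exp(\bar d_k)(\bar y)],
\]
and passing to the inverse limit transports this to $\exp(d)([x,y])=[\exp(d)(x),\exp(d)(y)]$. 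Bijectivity is obtained by applying the same construction to $-d$ and using the finite-dimensional identity $\exp(\bar d_k)\circ\exp(-\bar d_k)=\mathrm{id}$ in each quotient. I expect the main obstacle to be pinning down the correct meaning of convergence: the filtration topology is purely algebraic and does not see cancellation inside a fixed graded piece, so the tails $d^n(x)/n!$ need not lie in arbitrarily deep $\mathcal{N}^k$; the right reading is that the series converges in each quotient via the ordinary matrix exponential and these compatible limits glue, by completeness, to a single element of $\mathcal{N}$. Once this framework is fixed, all remaining assertions reduce to finite-dimensional matrix identities transferred to $\mathcal{N}$ along the inverse limit.
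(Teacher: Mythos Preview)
Your approach is genuinely different from the paper's and, in one respect, more careful. The paper works directly in the filtration topology: it asserts $d^n(x)\in\mathcal{N}^n$ for every $x$, so that $S_M(x)-S_N(x)\in\mathcal{N}^{N+1}$ and the partial sums form a Cauchy sequence; completeness then furnishes the limit, and the automorphism property is obtained from the formal identities $\exp(d)\circ\exp(-d)=\mathrm{id}$ and $\exp(d)[x,y]=[\exp(d)x,\exp(d)y]$ via the Leibniz rule. You instead descend to the finite-dimensional nilpotent quotients $\mathcal{N}/\mathcal{N}^k$, form the ordinary matrix exponentials $\exp(\bar d_k)$ there, check compatibility along the inverse system, and glue by completeness. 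The payoff is that your route does not depend on the step $d^n(x)\in\mathcal{N}^n$, which is false for a general derivation (take any nonzero element of a maximal torus, e.g.\ $d(e_1)=e_1$ on a generator, so $d^n(e_1)=e_1\notin\mathcal{N}^n$); hence literal convergence in the filtration topology is unavailable in that generality, exactly the obstruction you flag. Your reinterpretation---convergence of the matrix series in each finite-dimensional quotient, with the compatible limits assembling to an element of $\mathcal{N}$ by completeness---is the right fix. One wording slip: the images of the partial sums in $\mathcal{N}/\mathcal{N}^k$ do not ``stabilize'' (become eventually constant) but converge in the usual topology on that finite-dimensional space; you state this correctly two lines later.
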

\begin{proof} For an arbitrary $x\in\mathcal{N}$ and a derivation of $\cal N$ we set
$S_N(x)=\sum\limits_{n=0}^N \tfrac{d^n(x)}{n!}.$
Since $d^n(x)\in\mathcal{N}^n$, the remainder
$S_M(x)-S_N(x)=\sum\limits_{j=N+1}^M\tfrac{d^j(x)}{j!}$
lies in $\mathcal{N}^{N+1}$ for $M>N$.  Thus, the sequence $\{S_N(x)\}_{N}$ is Cauchy and converges by completeness and the limit function denoted $\exp{d}$. Since $\dim (\cal N/ \cal N^n)<\infty$, one gets $N$ does not depend on choosing $x$.  This gives us $\exp{(d)}$ is also continuous. Each finite partial sum \(S_N\) is a linear operator, and the limit of linear maps in this topology is again linear.  Hence $\exp(d)\colon\cal N\to\cal N$ is linear. Now we check the invertibility of $\exp(d)$.

Take two formal power series operators on $\cal N$:
   $$\exp(d) \;=\;\sum_{m=0}^\infty \frac{d^m}{m!},
     \qquad \exp(-d) \;=\;\sum_{n=0}^\infty \frac{(-d)^n}{n!}$$
and compose them, one gets
$$\exp(d)\,\circ\,\exp(-d)=\Bigl(\sum_{m=0}^\infty \tfrac{d^m}{m!}\Bigr)\circ\Bigl(\sum_{n=0}^\infty \tfrac{(-d)^n}{n!}\Bigr).$$
Formally, this double series expands to
$$\sum_{m=0}^\infty\sum_{n=0}^\infty\frac{d^m}{m!}\,\frac{(-d)^n}{n!}=\sum_{k=0}^\infty \Bigl(\sum_{m+n=k}\frac{(-1)^n}{m!\,n!}\Bigr)\,d^k=
id + \sum_{k=1}^\infty \Bigl(\frac{(1-1)^k}{k!}\Bigr)\,d^k=id.$$
Finally, because $d$ satisfies the Leibniz rule
$d([x,y])=[d(x),y]+[x,d(y)]$, one shows by the usual power series ``conjugation'' argument that
$$\exp(d)\bigl([x,y]\bigr)=\bigl[\exp(d)(x),\,\exp(d)(y)\bigr] \quad \mbox{for any} \quad x,y\in \cal N.$$
Thus, \(\exp(d)\) is an automorphism of $\cal N$.
\end{proof}

\begin{rem}
It is well-known that in finite-dimensional nilpotent Lie algebras, any two maximal tori are conjugate under an inner automorphism. While the existence of such a conjugating automorphism remains an open question for general pro-nilpotent algebras, the exponential map can define automorphisms for complete pro-nilpotent Lie algebras. For non-complete pro-nilpotent Lie algebra in the natural filtration topology, the convergence of the series $\operatorname{exp}(d)$ depends on derivations.
\end{rem}

\section{On residually solvable extensions.}

This section devoted to the study of residually solvable extensions of pro-nilpotent Lie algebras, generalizing the finite-dimensional solvable extensions. We define the pro-nilpotent radical and establish conditions for its existence. Furthermore, we provide a method, complete with a precise algorithm, to construct residually solvable extensions under certain conditions. Special attention is devoted to maximal extensions of pro-nilpotent algebras of maximal rank and to characteristically pro-nilpotent algebras, which analogously to the finite-dimensional case, do not admit nontrivial solvable extensions.

Due to Example \ref{exam2.6}, the maximal (by inclusion) pro-nilpotent ideal not always exist. In case of existence the pro-nilpotent ideal which contain all pro-nilpotent ideals of $\cal R,$ then we call this ideal {\it pro-nilpotent radical}.

\begin{prop}\label{prop5.1}  \

(i) There exists the pro-nilpotent radical in a pro-solvable Lie algebra $\cal R$;

(ii) In arbitrary residually solvable Lie algebra $\cal R$  satisfying the conditions: $\dim (\cal R/\cal R^2)< \infty$ and $\cal R^2$ is pro-nilpotent ideal, there exists the pro-nilpotent radical.
\end{prop}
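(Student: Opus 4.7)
The plan is to reduce both parts to a finite-dimensional linear-algebraic problem inside a suitable quotient of $\cal R$, thereby sidestepping the obstruction of Example~\ref{exam2.6}, which shows that arbitrary (infinite) sums of pro-nilpotent ideals need not be pro-nilpotent.

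First I would pin down a canonical finite-codimensional pro-nilpotent ideal $\cal K$ of $\cal R$. In case (i), take $\cal K = \cal R^{[2]}$: by Proposition~\ref{bobik} it is pro-nilpotent, and pro-solvability of $\cal R$ gives $\dim(\cal R/\cal R^{[2]}) < \infty$. In case (ii), the hypotheses directly furnish the analogous choice $\cal K = \cal R^{2}$: a pro-nilpotent ideal with $\dim(\cal R/\cal R^{2}) < \infty$. Now, for any pro-nilpotent ideal $\cal N$ of $\cal R$, Lemma~\ref{lem:pro-nilpotent-sum}(i) gives that $\cal N + \cal K$ is again pro-nilpotent. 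So any candidate pro-nilpotent radical must contain $\cal K$, and it is enough to find a maximum element among pro-nilpotent ideals containing $\cal K$.

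Let $\pi : \cal R \to \cal R/\cal K$ and consider the family
\[
\cal F \;=\; \{\, V \triangleleft \cal R/\cal K \ : \ \pi^{-1}(V) \text{ is pro-nilpotent}\,\}.
\]
This family is non-empty since $V = 0$ gives $\pi^{-1}(0) = \cal K \in \cal F$. If $V_{1},V_{2} \in \cal F$ then, because $\ker \pi = \cal K$ lies inside each preimage, one has $\pi^{-1}(V_{1}) + \pi^{-1}(V_{2}) = \pi^{-1}(V_{1}+V_{2})$, which is pro-nilpotent by Lemma~\ref{lem:pro-nilpotent-sum}(i). Thus $\cal F$ is closed under finite sums. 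Since $\dim(\cal R/\cal K) < \infty$, pick $V^{*} \in \cal F$ of maximum dimension. For any $V \in \cal F$ the ideal $V + V^{*}$ again lies in $\cal F$ with $\dim(V+V^{*}) \ge \dim V^{*}$, so $V + V^{*} = V^{*}$ and $V \subseteq V^{*}$. Setting $\cal N^{*} := \pi^{-1}(V^{*})$, this is a pro-nilpotent ideal, and for every pro-nilpotent ideal $\cal N$ of $\cal R$ the image $\pi(\cal N) = \pi(\cal N + \cal K)$ belongs to $\cal F$, hence $\cal N \subseteq \cal N + \cal K \subseteq \cal N^{*}$. Therefore $\cal N^{*}$ is the desired pro-nilpotent radical, and both (i) and (ii) follow by the same argument.

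The main difficulty, and the reason the statement is non-trivial, is precisely Example~\ref{exam2.6}: one cannot simply take the sum of all pro-nilpotent ideals. The essential point that unlocks the proof is the availability of a pro-nilpotent ideal of \emph{finite} codimension, namely $\cal R^{[2]}$ in the pro-solvable case (supplied by Proposition~\ref{bobik}) and $\cal R^{2}$ in case (ii) (supplied by hypothesis). This ideal absorbs every pro-nilpotent ideal while leaving behind only a finite-dimensional quotient in which the maximum can be located by a straightforward dimension argument combined with the binary-sum closure of Lemma~\ref{lem:pro-nilpotent-sum}(i).
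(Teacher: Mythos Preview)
Your argument is correct and follows essentially the same route as the paper: fix the finite-codimension pro-nilpotent ideal $\cal R^{[2]}=\cal R^{2}$ (via Proposition~\ref{bobik} in case~(i), by hypothesis in case~(ii)), use Lemma~\ref{lem:pro-nilpotent-sum}(i) to absorb any pro-nilpotent ideal into one containing $\cal K$, and then extract a maximum inside the finite-dimensional quotient. Your maximum-dimension argument in $\cal R/\cal K$ is in fact slightly cleaner than the paper's phrasing, which asserts there are only ``finitely many'' pro-nilpotent ideals containing $\cal R^{2}$ and then sums them.
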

\begin{proof} (i)
By Proposition \ref{bobik} we deduce that $\cal R^2$ is pro-nilpotent ideal of the algebra $\cal R$. Since $\dim (\cal R/\cal R^2)< \infty$, then there are finitely many pro-nilpotent ideals which contain $\cal R^2$. Now applying finitely many times Lemma \ref{lem:pro-nilpotent-sum}, we obtain that the sum of all pro-nilpotent ideals which contain $\cal R^2$ is a pro-nilpotent ideal, which we denote by $\cal M$. Consider an arbitrary pro-nilpotent ideal $\cal I$ of $\cal R$. Then again by Lemma \ref{lem:pro-nilpotent-sum} we get pro-nilpotent ideal $\cal R^2 + \cal I$ which contains $\cal R^2$. Since $\cal I\subseteq \cal R^2 + \cal I\subseteq\cal M$, we conclude that any pro-nilpotent ideal contained in $\cal M$. Therefore, $\cal M$ is the pro-nilpotent radical of the algebra $\cal R$.

Part (ii) is proved by applying the same arguments as in part (i).
\end{proof}

\subsection{Residually solvable extensions of pro-nilpotent Lie algebras.}
\noindent
By analogy with the finite-dimensional case, we will consider residually solvable Lie algebras with a given pro-nilpotent radical. According to Proposition \ref{prop5.1}(ii), this is possible whenever the residually solvable extension meets specific restrictions.

\begin{defn} For a given pro-nilpotent Lie algebra $\cal N$, one defines
$$\cal M_{\cal N}=\{\cal R: \cal N \ \textrm{pro-nilpotent radical of} \ \mathcal{R}, \ \dim \big(\cal R/ \cal R^2\big) < \infty\ \textrm{and}\ \cal R^2 \subseteq \cal N\}.$$
Since $\cal M_{\cal N}$ consists of residually solvable Lie algebras whose pro-nilpotent nilradical is $\cal N$ with $codim \cal N < \infty$, we refer an arbitrary element of $\cal M_{\cal N}$ as a residually solvable extension of $\mathcal{N}$. Furthermore, a residually solvable extension $\cal R$ of $\cal N$ is said to be a maximal if $\dim(\cal R / \cal N)$ is maximal.
\end{defn}

The following result provides a base of a method for construction under certain conditions of residually solvable extensions of pro-nilpotent Lie algebras, which is an extension of finite-dimensional method for the description of solvable algebras with a given nilradical \cite{Mubor}, \cite{Snobl}.

\begin{thm} \label{prop1.20} For an arbitrary $\mathcal R\in \cal M_{\cal N}$ the family $ad(\cal R)_{|\cal N}$ is triangularizable, while ${ad_x}_{|\cal N}$  is non strictly triangularizable for any $x\in \cal R \setminus \cal N$.
\end{thm}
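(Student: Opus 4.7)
The proof separates naturally. For the first part---triangularizability of the whole family $ad(\mathcal{R})|_{\mathcal{N}}$---my plan is to work with the natural filtration $\mathcal{N} \supset \mathcal{N}^2 \supset \mathcal{N}^3 \supset \cdots$. Each $\mathcal{N}^k$ is a characteristic ideal of $\mathcal{N}$, and since $\mathcal{N}$ is an ideal of $\mathcal{R}$, every $ad_y|_{\mathcal{N}}$ with $y \in \mathcal{R}$ is a derivation of $\mathcal{N}$ and therefore preserves each $\mathcal{N}^k$. On the finite-dimensional quotient $\mathcal{N}^k/\mathcal{N}^{k+1}$ elements of $\mathcal{N}$ act trivially, because $[\mathcal{N},\mathcal{N}^k]\subseteq \mathcal{N}^{k+1}$, so the induced action factors through $\mathcal{R}/\mathcal{N}$. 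The hypothesis $\mathcal{R}\in\mathcal{M}_{\mathcal{N}}$ forces $\mathcal{R}^2\subseteq\mathcal{N}$ and $\dim(\mathcal{R}/\mathcal{R}^2)<\infty$, so $\mathcal{R}/\mathcal{N}$ is a finite-dimensional abelian Lie algebra. A commuting finite family of endomorphisms of a finite-dimensional complex space admits a simultaneous triangularization; splicing these piecewise triangularizations along the natural filtration and refining each $\mathcal{N}^k/\mathcal{N}^{k+1}$ into one-dimensional steps produces the required $ad(\mathcal{R})$-invariant chain in $\mathcal{N}$.

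For the second part I would argue by contradiction. Suppose $d:=ad_x|_{\mathcal{N}}$ is strictly triangularizable for some $x\in\mathcal{R}\setminus\mathcal{N}$. By Proposition~\ref{prop4.1} the first diagonal block $d_{11}$, acting on the finite-dimensional space $\mathcal{N}/\mathcal{N}^2$, is strictly triangularizable and hence nilpotent, say of index $s_1$; the same construction inside that proposition yields that $d$ is nilpotent on each graded piece $\mathcal{N}^k/\mathcal{N}^{k+1}$ with some index $s_k$. Set $\mathcal{L}:=\mathcal{N}+\langle x\rangle$. It is an ideal of $\mathcal{R}$, since $[\mathcal{R},\mathcal{L}]\subseteq [\mathcal{R},\mathcal{N}]+\mathcal{R}^2\subseteq \mathcal{N}\subseteq\mathcal{L}$. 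If I can show that $\mathcal{L}$ is pro-nilpotent, then by Proposition~\ref{prop5.1} it is contained in the pro-nilpotent radical $\mathcal{N}$, forcing $x\in\mathcal{N}$, which is the desired contradiction.

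For the pro-nilpotency of $\mathcal{L}$ the inclusion $\mathcal{N}^k\subseteq\mathcal{L}^k$ yields
\[
\dim(\mathcal{L}/\mathcal{L}^k)\le 1+\dim(\mathcal{N}/\mathcal{N}^k)<\infty,
\]
so the substantive point is $\bigcap_k\mathcal{L}^k=\{0\}$. I would establish by induction on $m$ the inclusion $\mathcal{L}^{m+1}\subseteq d^m(\mathcal{N})+\mathcal{N}^2$, using the recursion $\mathcal{L}^{m+1}=d(\mathcal{L}^m)+[\mathcal{N},\mathcal{L}^m]$ (valid once $\mathcal{L}^m\subseteq\mathcal{N}$) together with the fact that $d$ preserves the natural filtration. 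Nilpotency of $d_{11}$ then gives $\mathcal{L}^{s_1+1}\subseteq\mathcal{N}^2$; running the same argument one level down, starting from $\mathcal{N}^2$ and using the nilpotency of $d$ on $\mathcal{N}^2/\mathcal{N}^3$, gives $\mathcal{L}^{s_1+s_2+1}\subseteq\mathcal{N}^3$, and iteratively $\mathcal{L}^{s_1+\cdots+s_k+1}\subseteq\mathcal{N}^{k+1}$ for every $k$. Hence $\bigcap_k\mathcal{L}^k\subseteq\bigcap_k\mathcal{N}^{k+1}=\{0\}$.

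The main obstacle is precisely this last filtration bookkeeping: converting the purely qualitative hypothesis ``$d_{11}$ is nilpotent'' into a quantitative descent of $\mathcal{L}^k$ through the powers of $\mathcal{N}$. Once that estimate is in place, the remaining ingredients---Proposition~\ref{prop4.1} for the block-triangular structure of derivations and Proposition~\ref{prop5.1} for the existence and maximality of the pro-nilpotent radical---are used essentially as black boxes.
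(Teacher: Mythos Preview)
Your proposal is correct and follows the same overall architecture as the paper: refine the lower-central filtration of $\mathcal{N}$ to a full flag for Part~1, and for Part~2 assume ${ad_x}|_{\mathcal{N}}$ is strictly triangularizable, form the ideal $\mathcal{N}+\langle x\rangle$, show it is pro-nilpotent, and contradict the maximality of the pro-nilpotent radical.

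The technical execution differs in two places worth flagging. In Part~1 you exploit that $[\mathcal{N},\mathcal{N}^k]\subseteq\mathcal{N}^{k+1}$, so the induced action on each slice $\mathcal{N}^k/\mathcal{N}^{k+1}$ factors through the \emph{abelian} quotient $\mathcal{R}/\mathcal{N}$, and you then invoke simultaneous triangularization of commuting operators. The paper instead argues that the induced family $\widetilde{ad}(\mathcal{R})$ is a finite-dimensional \emph{solvable} Lie algebra and appeals to Lie's theorem. Your observation is sharper and makes Lie's theorem unnecessary here. In Part~2 you control the descent of $\mathcal{L}^m$ through the filtration via the clean recursion $\mathcal{L}^{m+1}=d(\mathcal{L}^m)+[\mathcal{N},\mathcal{L}^m]$ together with the nilpotency of each diagonal block $d_{kk}$, yielding $\mathcal{L}^{s_1+\cdots+s_k+1}\subseteq\mathcal{N}^{k+1}$. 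The paper reaches the same conclusion by a combinatorial pigeonhole argument on words in $(\mathcal{N}\oplus\langle x\rangle)^{(s+1)l(s)}$, using the single bound $ad_x^{l(s)}(\mathcal{N})\subseteq\mathcal{N}^s$. Your level-by-level induction is arguably tidier; the paper's word argument is more direct in that it avoids invoking Proposition~\ref{prop4.1} explicitly. Either route closes the proof, and your identification of the ``filtration bookkeeping'' as the substantive step is exactly right.
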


\begin{proof} Consider the chain of subspaces of $\cal N$ which are invariant under $ad(\cal R)$ (because of being ideals)
\begin{equation}\label{chain0}
\mathcal{N} \supset \mathcal{N}^2 \supset \cdots \supset \mathcal{N}^i \supset \mathcal{N}^{i+1} \supset \cdots \supset \{0\},
\end{equation}

Following the approach used in the proof of Proposition \ref{prop1.10}, we consider a maximal chain $\mathcal{M}$ of ideals of $\mathcal{R}$ that lie in $\cal N$ and contains the chain in~\eqref{chain0}. Suppose, for the sake of contradiction, that there exists an ideal $\mathcal{W} \in \mathcal{M}$ such that $\dim(\mathcal{W} / \mathcal{W}_{-}) > 1$, where $\mathcal{W}_{-}$ denotes the immediate predecessor of $\mathcal{W}$ in the chain.

Since $\cal N$ is pro-nilpotent, there exists an integer $s \in \mathbb{N}$ such that

$$\cal N^{s+1} \subseteq \mathcal{W}_{-} \subset \mathcal{W} \subseteq \cal N^{s} \quad \mbox{and} \quad \dim(\mathcal{W} / \mathcal{W}_{-}) < \infty.$$

Therefore, we have induced family of operators $\widetilde{ad}(\cal R) \ : \ \cal W / \cal W_{-} \rightarrow \cal W / \cal W_{-}.$

The conditions of the theorem imply the decomposition $\mathcal{R} = \mathcal{N} \oplus \mathcal{Q}$, where $\mathcal{Q}$ is a finite-dimensional complementary subspace to $\mathcal{N}$. Taking into account that $\widetilde{ad}(\cal N^{s+1})=0$ we obtain that $\widetilde{ad}(\cal R) \subseteq \widetilde{ad}(\cal Q \oplus Span\{e_1, \dots, e_k\}),$ where $\{e_1, \dots, e_k\}$ are basis elements of $\cal N$ that do not lie in $\cal W_{-}.$ This implies that $\widetilde{ad}(\cal R)$ is finite-dimensional Lie algebra. Moreover, from $\cal R^2 \subseteq \cal N$ we derive that $\cal R^{[i+1]} \subseteq \cal N^{[i]} \subseteq \cal N^{2^{i-1}}.$ By virtue of pro-nilpotency of $\cal N$ we obtain the solvability of the Lie algebra $\widetilde{ad}(\cal R)$. Now, applying Lie's theorem (in finite-dimensional case) we can choose common eigenvector $\bar{0}\neq\bar{e}=e+\mathcal{W}_{-}$, which implies that  $\mathcal{W}_{-}(e)=\mathcal{W}_{-}\oplus\langle e \rangle $ forms an ideal of $\cal R$ which strictly lies between $\mathcal{W}_{-}$ and $\mathcal{W}.$

Now, we are going to prove that ${ad_x}_{| \cal N}$ is non strictly triangularizable for any $x\notin \cal N$. Let us assume the contrary, that is, there exists $x\notin\mathcal N$ such that $\mathrm{ad}_{x|\mathcal N}$ is strictly triangularizable. Set $\mathcal V = \mathcal N \oplus \langle x\rangle$.  Since $\mathcal R^2\subseteq\mathcal N$, it follows that $\mathcal V$ is an ideal of~$\mathcal R$.

Since ${ad_x}_{|\cal N}$ is strictly triangularizable, then for an arbitrary fixed $s\in\mathbb N$ there exists $l(s)\in\mathbb N$ such that $\mathrm{ad}_x^{l(s)}(\mathcal N)\;\subseteq\;\mathcal N^s$.

Now, we show that for each $s$,
$$\mathcal N^{(s+1)\,l(s)}
\;\subseteq\;
(\mathcal N\oplus\langle x\rangle)^{(s+1)\,l(s)}
\;\subseteq\;
\mathcal N^s.$$

The first inclusion is obvious.  To prove the second, expand a typical term in
$(\mathcal N\oplus\langle x\rangle)^{(s+1)\,l(s)}.
$
If $\mathcal N$ appears at least $s$ times, then since each $\mathcal N^i$ is an ideal, the term lies in~$\mathcal N^s$.  Otherwise, $\mathcal N$ appears at most $(s-1)$ times, then we have only two distinguishes cases.

{\bf Case 1.} There are two consecutive factors of~$\mathcal N$ with at least $l(s)$ copies of~$x$ between them. Then by the choice of $l(s)$ we have the embedding $\mathrm{ad}_x^{\,l(s)}(\mathcal N)\subseteq\mathcal N^s,$ which forces the term into~$\mathcal N^s$.

{\bf Case 2.} If no two copies of~$\mathcal N$ are separated by $l(s)$ or more~$x$'s, then by the pigeonhole principle there must be a run of $l(s)$ consecutive~$x$'s at one end of the term, and again
$\mathrm{ad}_x^{\,l(s)}(\mathcal N)\subseteq\mathcal N^s$
  shows the term lies in~$\mathcal N^s$. Thus, we obtain
$$\bigcap_{s=1}^\infty \cal V^{s}=
\bigcap_{s=1}^\infty \cal V^{(s+1)\,l(s)}
\;\subseteq\; \bigcap_{s=1}^\infty \mathcal N^s \;=\; \{0\}, $$
while
\[\dim\frac{\cal V}{\cal V^{(s+1)\,l(s)}} \;\le\;
1 + \dim\frac{\mathcal N}{\mathcal N^{(s+1)\,l(s)}}
\;<\; \infty. \]
The arbitrariness of $s$ and \eqref{eq1} imply that $\cal V$ is the pro-nilpotent ideal strictly contains $\mathcal N$, contradicting the maximality of the pro-nilpotent radical $\cal N$. Therefore, ${ad_x}_{|\cal N}$ is not strictly triangularizable.
\end{proof}

\begin{defn} Derivations $d_1, \dots, d_s$ of $\cal N$ are called strictly triangularizable independent (ST-independent), if the map $d=\alpha_1d_1+\cdots + \alpha_s d_s$ is not strictly triangularizable for any non-zero scalars $\alpha_i, 1\leq i \leq s$. In other words, $d$ is strictly triangularizable if and only if $\alpha_i=0, 1\leq i \leq s.$
\end{defn}

From Theorem \ref{prop1.20} we conclude that the codimension of $\cal N$ in $\cal R$ is not greater than the maximal number of ST-independent derivations of $\cal N$.

{\bf Conclusion:} The results of Theorem \ref{prop1.20}  lead that the method of construction of finite-dimensional solvable Lie algebras with a given nilpotent (non-characteristically nilpotent) algebras can be extended under certain conditions to the case of residually solvable extensions of a given pro-nilpotent Lie algebra. Namely, the algorithm consists of the following steps:
\begin{itemize}
\item[-] to describe the space $\operatorname{Der}(\cal N)$ for a given pro-nilpotent Lie algebra $\cal N;$

\item[-] to allocate all ST-independent derivations, say, $d_1, \dots, d_k;$

\item[-] setting $\cal Q=\operatorname{Span}\{x_1, \dots, x_s\}$ with the action of ${\operatorname{ad}_{x_i}}_{| \cal N}=d_{i_j}$ with $j_i\in\{1, \dots, k\}$ get the product $[\cal N, \cal Q];$

\item[-] applying Jacobi identity and suitable basis
transformations of the space $\cal N\oplus \cal Q$ to describe the product $[\cal Q, \cal Q].$
\end{itemize}

In fact, the descriptions of residually solvable extensions of some pro-nilpotent Lie algebras in  \cite{Residual1}-\cite{Residual2} these steps are used.

Recall that characteristically nilpotent Lie algebra is characterized the nilpotency of all its derivations. For more detailed results on characteristically nilpotent Lie algebras we refer the reader to \cite{CharNil} and references therein. This class of nilpotent Lie algebras does not admit solvable extensions, that is, there is no solvable Lie algebra whose nilradical is characteristically nilpotent algebra. This inspired us to allocate an analogous class of pro-nilpotent Lie algebras whose an arbitrary derivation is strictly triangularizable by meaning that such algebras will play the same role as characteristically nilpotent Lie algebras in finite dimensional case.

\begin{defn} An algebra $\cal N$ which satisfies the condition $0 < \dim\big(\cal N^i/ \cal N^{i+1}\big) < \infty, \ i\in \mathbb{N}$ is called characteristically pro-nilpotent Lie algebra if
any $d\in Der(\cal N)$ is strictly triangularizable.
\end{defn}

Since $ad_x$ for any $x$ element characteristically pro-nilpotent Lie algebra $\cal N$ is strictly triangularizable, from Theorem \ref{Engel's theorem} we conclude that characteristically pro-nilpotent Lie algebra is pro-nilpotent one.

Below we present an example of characteristically pro-nilpotent Lie algebra.
\begin{exam} \label{exam3.8} Consider the following Lie algebra:
$$\cal N:\left\{\begin{array}{llll}
[e_i,e_1]=e_{i+1},&i\geq2,\\[1mm]
[e_i,e_2]=e_{i+2}+e_{i+3},&i\geq3.\\[1mm]
\end{array}\right.$$

Obviously, $\dim\big(\cal N/ \cal N^{2}\big)=2$ and $\dim\big(\cal N^i/ \cal N^{i+1}\big)=1$ for $i\geq 2.$ Moreover, by straightforward computation one can show that any derivation of the algebra $\cal N$ is strictly triangular. Hence, the algebra $\cal N$ is characteristically pro-nilpotent.
\end{exam}

Thus, the above steps of algorithm do not applicable for characteristically pro-nilpotent algebras because does not exist any non-strictly triangularizable derivations of such algebras.

\begin{thm}\label{thm3.17}
For an arbitrary $\mathcal R\in \cal M_{\cal N}$
we have $\dim (\cal R/ \cal N)\leq\cal \dim \big(\cal N/\cal N^{2}\big).$

\end{thm}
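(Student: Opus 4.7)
The plan is to study the natural linear representation of $\cal R/\cal N$ on $\cal N/\cal N^{2}$ and deduce the bound from simultaneous triangularization of commuting operators, together with Theorem \ref{prop1.20}.

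First I would observe that the hypotheses force $\cal R/\cal N$ to be finite-dimensional and abelian: since $\cal R^{2}\subseteq\cal N$ we have $\dim(\cal R/\cal N)\leq\dim(\cal R/\cal R^{2})<\infty$, and $[\cal R,\cal R]\subseteq\cal N$ makes the quotient abelian. Because $\cal N$ is an ideal of $\cal R$ and $\cal N^{2}$ is characteristic in $\cal N$, every $\operatorname{ad}_{x}$ with $x\in\cal R$ restricts to a derivation of $\cal N$ preserving $\cal N^{2}$; this produces a Lie algebra homomorphism $\varphi:\cal R\to\operatorname{End}(\cal N/\cal N^{2})$. For $y\in\cal N$ the inclusion $\operatorname{ad}_{y}(\cal N)\subseteq\cal N^{2}$ gives $\varphi(y)=0$, so $\varphi$ descends to a map $\bar\varphi:\cal R/\cal N\to\operatorname{End}(\cal N/\cal N^{2})$ whose image is a commutative subspace.

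The key step is to prove that $\bar\varphi$ is injective and that every non-zero element of its image is non-nilpotent. For $x\in\cal R\setminus\cal N$, Theorem \ref{prop1.20} states that $\operatorname{ad}_{x}|_{\cal N}$ is not strictly triangularizable; applying Proposition \ref{prop4.1} to the derivation $\operatorname{ad}_{x}|_{\cal N}$ of the pro-nilpotent algebra $\cal N$ shows that the induced block $d_{11}^{x}$, which is exactly $\bar\varphi(\bar x)$ acting on the finite-dimensional space $\cal N/\cal N^{2}$, is not strictly triangularizable, equivalently not nilpotent. Since $\bar\varphi$ is linear and this conclusion applies to every non-zero class $\bar x\in\cal R/\cal N$, the map $\bar\varphi$ is injective and its image contains no non-zero nilpotent operator.

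To conclude, the commuting operators in $\bar\varphi(\cal R/\cal N)$ can be simultaneously upper triangularized on $\cal N/\cal N^{2}$ (standard fact over $\mathbb{C}$). The linear map that sends each upper triangular operator to its diagonal is therefore defined on $\bar\varphi(\cal R/\cal N)$, and its kernel consists of strictly upper triangular (hence nilpotent) commuting operators; by the previous step the only such operator in the image is $0$. Consequently $\bar\varphi(\cal R/\cal N)$ embeds into $\mathbb{C}^{\dim(\cal N/\cal N^{2})}$, giving
\[
\dim(\cal R/\cal N)=\dim\bar\varphi(\cal R/\cal N)\leq\dim(\cal N/\cal N^{2}).
\]
The main subtlety lies in the second paragraph: translating the infinite-dimensional condition ``not strictly triangularizable on $\cal N$'' from Theorem \ref{prop1.20} into the clean finite-dimensional condition ``not nilpotent on $\cal N/\cal N^{2}$'' via Proposition \ref{prop4.1}; once this bridge is in place, the remainder reduces to a standard simultaneous-triangularization argument.
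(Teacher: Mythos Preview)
Your proof is correct and follows essentially the same approach as the paper: both reduce, via Proposition~\ref{prop4.1}, the non-strict-triangularizability of $\operatorname{ad}_x|_{\cal N}$ for $x\notin\cal N$ (Theorem~\ref{prop1.20}) to non-nilpotency of the induced operator on $\cal N/\cal N^2$, use $\cal R^2\subseteq\cal N$ to obtain commutativity of these induced operators, and then bound the dimension by $\dim(\cal N/\cal N^2)$ via simultaneous triangularization and projection onto diagonals. Your presentation is somewhat cleaner in packaging the argument through the map $\bar\varphi:\cal R/\cal N\to\operatorname{End}(\cal N/\cal N^2)$ rather than through the language of ST-independent families, but the mathematical content is the same.
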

\begin{proof} Consider the vector spaces decomposition $\cal R=\cal N\oplus \cal Q$. From the conditions of theorem it follows that $\dim \cal Q < \infty.$ Let $d$ be a derivation of $\mathcal{N}$ and $\cal B=\bigcup\limits_{i=1}^{\infty}\cal B_i$ of the nilpotent algebra $\cal N$ that is agreed with natural filtration.

The relation $d([a,b])=[d(a),b]+[a,d(b)]$ gives that the elements of the diagonal blocks $D_{ll},$ $(l=2,\dots, q)$ of $D$ are linear functions of elements of $D_{11}$ and the structure constants of $\mathcal{N}$.

The condition $\mathcal{R}^2\subseteq\mathcal{N}$ implies $[ad_{z_1},ad_{z_2}]=ad_{[z_1,z_2]}\in Ad(\mathcal{N})$
for any $z_{1},z_{2}\in\mathcal{Q}.$

Consider a set of derivations $\{D_{1},\dots,D_{l}\}$ of $\mathcal{N}$ such that $[D_i,D_j]\in Ad(\mathcal{N})$ for any $i,j\in \{1, \dots, l\}.$ This gives

\begin{equation} \label{eq2.5}
\begin{array}{cccc}
[D_{ll}^{i},D_{ll}^{j}]=0, && \ \mbox{where}\ 1\leq l \leq q, \ 1 \leq t \leq p \ \mbox{and}\ 1 \leq i,j \leq l.
\end{array}
\end{equation}

From Proposition \ref{prop4.1} we get that the derivations $D_1,\dots, D_l$ are ST-independent  if and only if their corresponding submatrices
$\{D_{11}^{1},\dots,D_{11}^{l}\}$ are ST-independent (which is equivalent to linearly nil-independency because of finite size of the first diagonal block).

Along with \eqref{eq2.5} this implies that the number of ST-independent outer derivations of $\cal N$ is bounded from the above by the maximal number of ST-independent pairwise commuting matrices of the size $k\times k$, where $k=Card(\cal B_1)$. This number is equal to $\dim \cal N- \dim \big([\cal N,\cal N]\big)=dim \big(\cal N / \cal N^2\big),$ which is the needed estimation.
\end{proof}

It should be noted that the conditions of Theorem \ref{thm3.17} imply that $\cal R$ is residually solvable Lie algebra. However, these conditions not always true for an arbitrary residually solvable Lie algebra. In the following example we present a residually solvable Lie algebra $\cal R$ that do not satisfies the conditions $\dim \big(\cal R / \cal R^2\big) < \infty$ and the existence of pro-nilpotent radical of $\cal R$.

\begin{exam} Let $\cal R'$ be a Lie algebra that satisfies the condition of Theorem \ref{thm3.17} (for instance, $\cal R'$ is non negative part of Witt algebra with a basis $\{e_i \ | \ i\in \mathbb{N}\cup 0\}$). Consider $\cal R=\cal R'\oplus \cal A,$ where $\cal A$ is countable dimensional abelian algebra. Then $\cal R$ is residually solvable Lie algebra that does not have pro-nilpotent radical and $\dim \big(\cal R/ \cal R^2\big)=\infty.$
\end{exam}

When a maximal residually solvable extension exists, the following result gives the exact codimension of the pro-nilpotent radical.

\begin{prop} Let $\mathcal{R}\in \cal M_{\cal N}$ be a maximal residually solvable extension. Then $rank(\cal N)=\operatorname{codim} \cal N.$
\end{prop}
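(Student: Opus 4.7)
The strategy is to bound $\operatorname{codim}\mathcal{N}$ both from below and from above by $rank(\mathcal{N})$. For the lower bound, I would exhibit an element of $\mathcal{M}_{\mathcal{N}}$ realizing this codimension. Taking the maximal torus $\mathcal{T}$ on $\mathcal{N}$, the construction of Example~\ref{exam3.11} gives a residually solvable algebra $\mathcal{R}_{\mathcal{T}} = \mathcal{N}\rtimes\mathcal{T}$ with $\mathcal{R}_{\mathcal{T}}^2\subseteq\mathcal{N}$ and $\dim(\mathcal{R}_{\mathcal{T}}/\mathcal{R}_{\mathcal{T}}^2)<\infty$. The only delicate verification is that $\mathcal{N}$ is the pro-nilpotent radical of $\mathcal{R}_{\mathcal T}$: any pro-nilpotent ideal properly containing $\mathcal N$ would possess a non-zero representative $t\in\mathcal T$, whose adjoint acts on $\mathcal{N}$ as a non-zero diagonal operator, so $\operatorname{ad}_t$ cannot be strictly triangularizable, contradicting Theorem~\ref{Engel's theorem}. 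By maximality of $\mathcal R$, $\operatorname{codim}_{\mathcal R}\mathcal N\ge\operatorname{codim}_{\mathcal{R}_{\mathcal T}}\mathcal N = \dim\mathcal{T} = rank(\mathcal{N})$.

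For the reverse inequality, write $\mathcal{R}=\mathcal{N}\oplus\mathcal{Q}$ with $\mathcal Q=\operatorname{Span}\{x_1,\dots,x_s\}$, $s=\operatorname{codim}\mathcal N$, and set $D_i=\operatorname{ad}_{x_i}|_{\mathcal N}\in\operatorname{Der}(\mathcal N)$. By Theorem~\ref{prop1.20}, each $D_i$ is triangularizable but not strictly triangularizable, and because any non-trivial linear combination of the $x_i$ still lies in $\mathcal R\setminus\mathcal N$, the family $\{D_i\}$ is ST-independent. Since $\mathcal R^2\subseteq\mathcal N$, the commutators $[D_i,D_j]=\operatorname{ad}_{[x_i,x_j]}$ are inner derivations, which vanish on the first diagonal block; projecting (via Proposition~\ref{prop4.1}) to the induced operators $D_{11}^i$ on $\mathcal{N}/\mathcal{N}^2$ therefore gives a pairwise commuting, ST-independent family of triangularizable operators on a finite-dimensional space. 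Applying Jordan-Chevalley on $\mathcal{N}/\mathcal{N}^2$, one writes $D_{11}^i = S_i + N_i$ with $S_i$ diagonalizable, $N_i$ nilpotent, all pairwise commuting; a relation $\sum_i\alpha_i S_i=0$ would make $\sum_i\alpha_i D_{11}^i=\sum_i\alpha_i N_i$ a nilpotent sum of commuting nilpotents, contradicting ST-independence again via Proposition~\ref{prop4.1}. Hence $\{S_1,\dots,S_s\}$ is linearly independent.

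The main obstacle is now to promote this finite-dimensional family of diagonalizable operators on $\mathcal{N}/\mathcal{N}^2$ to an honest torus on $\mathcal{N}$ of dimension $s$. I would handle this through the inverse-limit presentation $\mathcal{N}\cong\varprojlim\mathcal{N}/\mathcal{N}^k$ used throughout Section~3. In each finite-dimensional nilpotent quotient $\mathcal{N}/\mathcal{N}^k$, the induced derivations $\bar D_i^{(k)}$ admit the classical Jordan-Chevalley decomposition whose semisimple parts are themselves derivations and pairwise commute. Uniqueness of Jordan-Chevalley makes these semisimple parts compatible with the projections $\mathcal{N}/\mathcal{N}^{k+1}\twoheadrightarrow\mathcal{N}/\mathcal{N}^k$, so the same limit construction that assembles the family $\{\mathcal{T}_k\}$ into a maximal torus on $\mathcal N$ yields commuting semisimple derivations $\widetilde S_1,\dots,\widetilde S_s\in\operatorname{Der}(\mathcal N)$. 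Their projections to $\mathcal{N}/\mathcal{N}^2$ are the linearly independent $S_i$, so the $\widetilde S_i$ span a torus on $\mathcal N$, giving $s\le rank(\mathcal N)$. Verifying the compatibility of the Jordan-Chevalley decompositions along the projections of the inverse system is the delicate step; once it is settled, combining the two inequalities yields the claimed equality.
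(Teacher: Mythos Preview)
Your lower bound argument via $\mathcal{R}_{\mathcal{T}}$ matches the paper's, with a useful extra verification that $\mathcal{N}$ is indeed the pro-nilpotent radical of $\mathcal{R}_{\mathcal{T}}$. For the upper bound, however, your route through Jordan--Chevalley on the quotients $\mathcal{N}/\mathcal{N}^k$ has a gap that is not the one you flagged. Compatibility of the semisimple parts $S_i^{(k)}$ under the projections $\mathcal{N}/\mathcal{N}^{k+1}\twoheadrightarrow\mathcal{N}/\mathcal{N}^k$ is \emph{not} the delicate point: it is immediate from uniqueness of the Jordan--Chevalley decomposition, since the image of a semisimple (resp.\ nilpotent) operator under an equivariant surjection is again semisimple (resp.\ nilpotent). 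The real issue is what the compatible family produces in the limit. An inverse system of operators on $\mathcal{N}/\mathcal{N}^k$ yields a map on $\widehat{\mathcal{N}}=\varprojlim\mathcal{N}/\mathcal{N}^k$, and the passage in Section~3 that you invoke starts from a torus already defined on $\mathcal{N}$ and recognizes it as the limit of its reductions; it does not assert that an arbitrary compatible family lifts to $\operatorname{Der}(\mathcal{N})$. Since the proposition does not assume completeness, you need an argument that each $\widetilde S_i$ sends every Hamel basis vector of $\mathcal{N}$ to a \emph{finite} linear combination, and nothing in the Jordan--Chevalley construction provides this: the semisimple part $S_i^{(k)}$ is a polynomial $p_k(\bar D_i^{(k)})$ with $p_k$ depending on $k$, so for a fixed $e_j$ the sequence $p_k(D_i)(e_j)$ may involve unboundedly many basis elements as $k$ grows.

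The paper sidesteps this completeness issue by a more concrete device: instead of Jordan--Chevalley, it fixes the simultaneous triangular Hamel basis $\mathcal{B}$ furnished by Theorem~\ref{prop1.20} and takes $d_0^{x_i}$ to be the literal diagonal of $\operatorname{ad}_{x_i}|_{\mathcal{N}}$ in that basis. This map is automatically a linear endomorphism of $\mathcal{N}$ (each basis vector goes to a scalar multiple of itself), so the only question is whether it is a derivation, i.e.\ whether the diagonal entries solve the infinite linear system $S_{\mathcal{B}}$. The paper handles this by reducing to the finite subsystems $S_{\mathcal{B}}^k$ coming from the nilpotent quotients $\mathcal{W}_k$ and invoking Abian's criterion for infinite linear systems. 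If you want to salvage your approach, you would need either to assume completeness of $\mathcal{N}$, or to argue directly that the limit semisimple part coincides with a diagonal operator in some Hamel basis of $\mathcal{N}$---at which point you are essentially reproducing the paper's argument.
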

\begin{proof} Let $\mathcal{R}=\cal N \oplus \cal Q$ and $\{x_1, \dots, x_s\}$ be a basis of $\cal Q.$ If $s < rank (\cal N)$, then $\cal R$ is not a maximal extension of $\cal N$ because of $\codim \cal N=\ dim \cal T$ in $\dim\cal R_{\cal T}$ is strictly greater than $\codim \cal N=\dim \cal Q$ in $\cal R$. Thereby, $s \geq rank (\cal N).$

Thanks to Theorem \ref{prop1.20} one can conclude that
there exist a basis $\cal B$ of $\cal N$ such that $ad(\cal Q)_{|\cal N}$ are triangular but not strictly triangular. For the sake of convenience, we shall assume that  $ad(\cal Q)_{|\cal N}$ in the basis have infinite upper-triangular matrix forms. In order to diagonal of ${ad_{x}}_{|\mathcal{N}}, \ x\in \cal Q$, denoted further by $d_0^x$, is a derivation of $\cal N$ it has to be solution of the system of infinitely many linear equations $S_{\cal B}$. Set $S_{\cal B}^k$ the subsystem of $S_{\cal B}$ that obtained from the products in $\cal W_k.$ Note that any $k$ subsystem $S_{\cal B}^k$ has solution because of finite dimension nilpotent Lie algebra $\cal W_k$. Since any finite subsystem of the system $S_{\cal B}$ is also subsystem of $S_{\cal B}^k$ for some $k\in \mathbb{N},$ it has a solution. Now applying result of \cite{Abian} we conclude that the system $S_{\cal B}$ has a solution, it leads that $d_0^x$ is a derivation of $\cal N$.

Thanks to Theorem \ref{prop1.20} we obtain linearly independent $s$ diagonal derivations $d_0^{x_i}$ that are embedded into a maximal torus. Thus, $s \leq rank(\cal N)$.
\end{proof}

\begin{cor} \label{cor1.3} The diagonals of $ad_{{x_i}|\mathcal{N}}, \ 1\leq i \leq s,$
forms a basis of a maximal torus $\cal T$ on $\cal N$.
\end{cor}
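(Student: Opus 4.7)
The plan is to package the linearly independent diagonal derivations produced in the proof of the previous proposition into a maximal torus. The previous proposition already yields, for the maximal extension $\mathcal{R}=\mathcal{N}\oplus\mathcal{Q}$ with $\mathcal{Q}=\operatorname{Span}\{x_1,\dots,x_s\}$, that the diagonal parts $d_0^{x_i}$ of $\operatorname{ad}_{x_i|\mathcal{N}}$ (computed in the triangularizing basis $\mathcal{B}$ supplied by Theorem~\ref{prop1.20}) are derivations of $\mathcal{N}$. So the task reduces to checking that $\operatorname{Span}\{d_0^{x_1},\dots,d_0^{x_s}\}$ satisfies the three conditions of being a torus and that its dimension equals $\operatorname{rank}(\mathcal{N})$.

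First, each $d_0^{x_i}$ is diagonal in the basis $\mathcal{B}$, hence diagonalizable. Second, diagonal matrices in a common basis commute, so $[d_0^{x_i},d_0^{x_j}]=0$ for all $i,j$, giving an abelian subalgebra $\mathcal{T}_0\subseteq\operatorname{Der}(\mathcal{N})$ consisting of diagonalizable endomorphisms, i.e., a torus. Third, the linear independence of $d_0^{x_1},\dots,d_0^{x_s}$ follows from the linear independence of the restrictions $\operatorname{ad}_{x_i|\mathcal{N}}$ (if a non-trivial combination of the diagonals vanished, the corresponding combination of the $\operatorname{ad}_{x_i|\mathcal{N}}$ would be strictly triangularizable, contradicting the second assertion of Theorem~\ref{prop1.20}). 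Hence $\dim\mathcal{T}_0=s$.

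Finally, by the preceding proposition $s=\operatorname{codim}\mathcal{N}=\operatorname{rank}(\mathcal{N})$, and by the theorem of Section~3 every maximal torus of $\mathcal{N}$ has dimension exactly $\operatorname{rank}(\mathcal{N})$. Therefore the torus $\mathcal{T}_0$ attains the maximal possible dimension and cannot be properly contained in any larger torus, so it is maximal and $\{d_0^{x_1},\dots,d_0^{x_s}\}$ is a basis of it. The only subtle point is the linear independence step, where one must be careful to distinguish linear independence of the full operators $\operatorname{ad}_{x_i|\mathcal{N}}$ from that of their diagonals; this is handled by invoking the ST-independence established in Theorem~\ref{prop1.20}, which rules out any non-trivial combination whose diagonal part is zero.
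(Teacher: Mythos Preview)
Your proposal is correct and follows essentially the same route as the paper. The paper states this as a corollary without a separate proof because all the ingredients are already supplied in the proof of the preceding proposition: the diagonals $d_0^{x_i}$ are shown to be derivations (via the solvability of the system $S_{\mathcal B}$), they are linearly independent by the ST-independence coming from Theorem~\ref{prop1.20}, and the proposition gives $s=\operatorname{rank}(\mathcal N)$; your write-up just makes these implicit steps explicit and adds the observation that a torus of dimension equal to the common dimension of all maximal tori must itself be maximal.
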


\subsection{Maximal residually solvable extensions of pro-nilpotent Lie algebras of maximal rank.}

In this subsection, we prove the innerness of all derivations of maximal residually solvable extensions of pro-nilpotent Lie algebras of maximal rank. First, similarly to finite dimensional nilpotent Lie algebras case (see in \cite{Complete1}), we introduce the notions of pro-nilpotent and residually solvable Lie algebras of maximal rank.

\begin{defn} A pro-nilpotent Lie algebra $\cal N$ is called of maximal rank, if $\dim (\cal N/ \cal N^2)=rank(\cal N)$.
\end{defn}

Consider the $\mathbb{N}$-graded infinite-dimensional Lie algebra $\mathfrak{n}_1$, which is the positive part of the affine Kac-Moody algebra $A_1^{(1)}$ (see in \cite{Kac}):
$$[e_i, e_j] =c_{i,j} e_{i+j}, \quad i,j\in \mathbb{N} \quad \mbox{where} \quad c_{i,j}=
\left \{\begin{array}{cccccccc}
1, & if & i-j&\equiv& 1 & mod \ 3,\\[1mm]
0,&if & i-j&\equiv& 0 & mod\ 3,\\[1mm]
-1,& if& i-j&\equiv& -1 & mod\ 3.
\end{array}\right.$$

From the description of $\Der(\mathbf{n}_1)$ obtained in \cite{Residual2} along with
$$\mathbf{n}_1^1/\mathbf{n}_1^2=\mathrm{Span} \{e_1,e_2\} \quad \mathbf{n}_1^i/\mathbf{n}_1^{i+1}=\mathrm{Span} \{e_{i+1}\}, \quad i\geq 2,$$
we conclude that the algebra $\mathbf{n}_1$ is pro-nilpotent Lie algebra of maximal rank.

Furthermore, unlike in the finite-dimensional case, the characterization of the maximal residually solvable extensions of $\mathbf{n}_1$ provided in \cite{Residual2} shows that some of these extensions admit complementary subspace $\mathcal{Q}$ to $\mathbf{n}_1$, which can not be transformed into abelian subalgebra.

\begin{thm} \label{thm5.2} Let $\mathcal{R}$ be a maximal residually solvable extension of a pro-nilpotent Lie algebra $\mathcal{N}$ of maximal rank whose complementary subspace in $\mathcal{R}$ is abelian subalgebra. Then $\mathcal{R}$ is centerless and $\operatorname{Der}(\mathcal{R}) = \operatorname{InnDer}(\mathcal{R})$.
\end{thm}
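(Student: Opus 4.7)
The plan is to use the maximal-rank hypothesis to show that the zero weight space $\cal N_0$ under the maximal torus $\cal T$ is trivial, and then to exploit Theorem \ref{prop1.20} and Corollary \ref{cor1.3} to settle centerlessness and innerness in two separate stages. I would start by fixing notation: by Corollary \ref{cor1.3}, $\cal Q=\Span\{x_1,\dots,x_s\}$ with $s=rank(\cal N)=\dim(\cal N/\cal N^2)$, and writing $\operatorname{ad}_{x_i}|_{\cal N}=t_i+\sigma_i$, the $\{t_i\}$ form a basis of $\cal T$ while each $\sigma_i$ is strictly triangularizable with respect to the natural filtration. Since $\cal T\hookrightarrow\operatorname{End}(\cal N/\cal N^2)$ is injective with matching dimensions, the simple weights $\bar\alpha_1,\dots,\bar\alpha_s$ are linearly independent in $\cal T^*$, and every weight occurring in $\cal N$ is a positive integer combination of them of length equal to the filtration degree; in particular $\cal N_0=0$.

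For centerlessness, let $z=n+q\in Z(\cal R)$ with $n\in\cal N$ and $q\in\cal Q$. The identity $[z,\cal N]=0$ gives $\operatorname{ad}_q|_{\cal N}=-\operatorname{ad}_n|_{\cal N}$, which is strictly triangularizable by Proposition \ref{Engelpart1}; by Theorem \ref{prop1.20} this forces $q=0$, so $z=n\in\cal N$. The condition $\operatorname{ad}_{x_i}(n)=0$, projected to each quotient $\cal N^k/\cal N^{k+1}$, kills $\sigma_i$ (which shifts into $\cal N^{k+1}$) and leaves $\bar t_i(\bar n)=0$ for every $i$. Decomposing $\bar n=\sum_\alpha\bar n_\alpha$ by weights and using that no non-trivial weight in $\cal N^k/\cal N^{k+1}$ vanishes on all $t_i$, we obtain $\bar n=0$, hence $n\in\bigcap_k\cal N^k=0$.

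For innerness, let $D\in\Der(\cal R)$. I first argue $D(\cal N)\subseteq\cal N$: let $\pi$ denote the projection onto $\cal Q$; applying the derivation identity to $[n_1,n_2]$ and to $[q,n]$, with $\cal Q$ abelian and $\cal R^2\subseteq\cal N$, yields $\pi D|_{\cal N^2}=0$ and $\pi D|_{[\cal Q,\cal N]}=0$; since maximal rank gives $[\cal Q,\cal N]+\cal N^2=\cal N$ (the $t_i$ act invertibly on each simple-root line modulo $\cal N^2$), we deduce $\pi D|_{\cal N}=0$. So $D|_{\cal N}\in\Der(\cal N)$. Decomposing $D|_{\cal N}$ under the $\cal T$-action and invoking the pro-nilpotent max-rank analogue of $\Der(\cal N)=\cal T\oplus\operatorname{ad}(\cal N)$ (the weight-$0$ part lies in $\cal T$ via additivity on the weight lattice, while each non-zero weight component is the adjoint of an element of the corresponding root space since $\cal N_0=0$), I can subtract inner derivations to assume $D|_{\cal N}\in\cal T$; writing $D|_{\cal N}=\sum c_i t_i$ and subtracting $\operatorname{ad}_{\sum c_i x_i}$ together with an inner correction by an element of $\cal N$ cancelling the $\sigma_i$ contributions, I reduce further to $D|_{\cal N}=0$. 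Then the derivation identity on $[x_i,y]\in\cal N$ forces $D(x_i)\in Z_{\cal R}(\cal N)=Z(\cal N)$, while $[x_i,x_j]=0$ imposes the cocycle condition $\operatorname{ad}_{x_i}(D(x_j))=\operatorname{ad}_{x_j}(D(x_i))$. Since $Z(\cal N)$ decomposes under the torus into non-zero-weight pieces only (because $Z(\cal N)\cap\cal N_0=0$), a weight-by-weight coboundary analysis yields $w\in Z(\cal N)$ with $D(x_i)=-\operatorname{ad}_{x_i}(w)=[w,x_i]$, so $D=\operatorname{ad}_w$ modulo the previous inner adjustments.

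The principal obstacle is the structural decomposition $\Der(\cal N)=\cal T\oplus\operatorname{ad}(\cal N)$ in the pro-nilpotent max-rank setting, together with the coboundary vanishing $H^1(\cal Q;Z(\cal N))=0$ required to close the final step. The finite-dimensional analogues (cf.\ \cite{Leger}, \cite{Mubor}, \cite{Snobl}) are proved by simultaneously diagonalizing under the torus and matching additive functionals on the weight lattice; translating these to the pro-nilpotent setting requires the inverse-limit description $\cal T=\lim_k\cal T_k$ from Section~3 and a filtration-compatible lifting of quotient-level decompositions to $\cal N$, since $\cal N$ is not assumed complete and the weight decompositions and cocycle equations must be assembled across possibly infinite direct sums.
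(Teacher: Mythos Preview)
Your approach diverges substantially from the paper's, and the structural lemma you rely on is false.

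The paper proceeds by finite-dimensional reduction. It forms the quotients $\cal R_k = \cal R/\cal N^k$, observes that each is a maximal solvable extension of the finite-dimensional nilpotent algebra $\cal N/\cal N^k$ of maximal rank, and invokes the known completeness result for such algebras from \cite{Complete1}: $\Der(\cal R_k)=\operatorname{InDer}(\cal R_k)$ and $\operatorname{Center}(\cal R_k)=0$. After establishing $d(\cal N)\subseteq\cal N$ (via $\cal N=[\cal N,\cal Q]$, itself deduced from the quotients), a derivation $d\in\Der(\cal R)$ induces $\bar d_k=\operatorname{ad}_{\bar y_k}$ on each $\cal R_k$. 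The substance of the argument is then to compare the coefficients of $\bar y_k$ in the Hamel basis for varying $k$, using the triangular form of $\operatorname{ad}_{x_i}|_{\cal N}$ and Corollary \ref{cor1.3}, and to show they are independent of $k$; this produces a single $y\in\cal R$ with $d=\operatorname{ad}_y$. Centerlessness is pulled back from the quotients in the same way.

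Your route instead tries to control $\Der(\cal N)$ directly via the decomposition $\Der(\cal N)=\cal T\oplus\operatorname{ad}(\cal N)$. This decomposition is \emph{not} a consequence of maximal rank: for $\mathfrak{m}_1$ (pro-nilpotent of maximal rank $2$), the linear map $d$ with $d(e_1)=e_2$ and $d(e_i)=0$ for $i\ge2$ is a derivation of nonzero $\cal T$-weight $\alpha_2-\alpha_1$, yet it is not inner since $[n,e_1]\in\Span\{e_j:j\ge3\}$ for every $n\in\mathfrak{m}_1$. Hence your assertion that ``each non-zero weight component is the adjoint of an element of the corresponding root space since $\cal N_0=0$'' fails, and the reduction of $D|_{\cal N}$ to an element of $\cal T$ modulo inner corrections cannot be carried out. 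The underlying issue is that not every derivation of $\cal N$ extends to $\cal R$; the paper avoids analyzing $\Der(\cal N)$ altogether by working with $\Der(\cal R_k)$, where completeness is already available.

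Your centerlessness argument and your proof that $D(\cal N)\subseteq\cal N$ via $[\cal Q,\cal N]+\cal N^2=\cal N$ are correct and pleasantly direct, but the middle of the innerness argument breaks at the point above, so the final cohomological step is never reached.
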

\begin{proof} Let $\cal R=\cal N\oplus \cal Q$ with $\cal N=\operatorname{Span}\{e_i \ | \ i\in \mathbb{N}\}$ and  $\cal Q=\operatorname{Span}\{x_1, \dots, x_s\}$. Assume that $\{e_1, \dots, e_{s}, x_1, \dots, x_s\}$ are generators of $\cal R$. The condition $\cal R^2 \subseteq \cal N$ implies that $\operatorname{ad}_{x_i}(\cal N^k)\subseteq \cal N^k.$ Therefore for any $x_i\in \cal Q$ one can consider reduced action $\operatorname{ad}_{x_i}$ on $\cal N_k=\cal N/ \cal N^k.$ Denoting by $\cal Q_k$ the vector space $\cal Q$ with the reduced action on $\cal N_k$, for any $k \in \mathbb{N}$ the quotient algebra
$$\cal R_k:= \cal R/ \cal N^k= \cal N/ \cal N^k \oplus \cal Q_k$$
is maximal solvable extension of finite-dimensional nilpotent Lie algebra of maximal rank.

Observe that $\cal N=[\cal N, \cal Q].$ Indeed, for an arbitrary $y\in \cal N$ there exists $k_1$ such that $y\in \cal N \setminus \cal N^{k_1}.$ Since the algebra $\cal R_{k_1}$ is isomorphic to $\cal N_{k_1}\rtimes \cal T_{k_1},$ where $\cal T_{k_1}$ is a maximal torus on $\cal N_{k_1}$, then $\cal N_{k_1}=[\cal N_{k_1}, \cal T_{k_1}]$. Consequently, we get $\cal N_{k_1}=[\cal N_{k_1}, \cal Q_{k_1}]$, which leads the existence of $x\in \cal Q_{k_1}$ and $z\in \cal N_{k_1}$  such that $y=[z,x].$ Let $z\in \cal N \setminus \cal N^{k_2}$. Then we have the product $y=[z,x]$ in the space $\cal N \setminus \cal N^{k}$ for any $k\geq \operatorname{max}\{k_1, k_2\}.$ Thus, $\cal N=[\cal N, \cal Q].$

Let $d$ be an arbitrary derivation of $\cal R$. Then from
$$d(\cal N)=d([\cal N,\cal Q])=
[d(\cal N), \cal Q]+[\cal N, d(\cal Q)]\subseteq \cal R^2 \subseteq \cal N,$$
we deduce that $d(\cal N)\subseteq \cal N,$ which leads to $d(\cal N^k)\subseteq \cal N^k$ for any $k\in \mathbb{N}.$ Therefore, for $d \in \text{Der}(\cal R)$ one can consider the well-defined derivation of $\cal R_k$ as follows $\bar{d}(\bar{v}) = \overline{d(v)},$ where $\bar{v} = v + \cal N^k$. Since $d$ is completely determined by its values on generators it is sufficient to prove the existence of $a \in \cal R$ such that $d(z) = \text{ad}_a(z)$, where $z\in \{e_1, \dots, e_{s}, x_1, \dots, x_s\}$. It is known that $\Der(\cal R_k)=\operatorname{InDer}(\cal R_k)$ for any $k\in \mathbb{N}$ (see \cite{Complete1}). We set $$d(e_i) = \sum\limits_{m=1}^p a_{i,m} e_m, \quad d(x_i) = \sum\limits_{m=1}^p b_{i,m}e_m+ \sum\limits_{m=1}^sc_{i,m}x_m, \quad 1\leq i \leq s.
$$
Let us take sufficiently large $k$ such that $q_k \geq p.$ Then $\overline{d(v)}=\overline{d(\bar{v})}=\operatorname{ad}_{\bar{y}_k}$ for some $\bar{y}_k=y_k+\mathcal{N}^k$ and for any $\bar{v}=v+\mathcal{N}^k$. Set
$$\bar{y}_k=\sum_{m=1}^{q_k} \alpha_{k,m} e_m+\sum_{m=1}^s \beta_{k,m}x_m+\mathcal{N}^k, \quad \mbox{where} \quad \cal N^k=\operatorname{Span}\{e_{q_{k}+i} \ | \ i\in \mathbb{N}\}.$$

Consider equalities
\begin{equation}\label{eq5.2}
\overline{d\left(x_i\right)}=\left[\bar{x}_i, \bar{y}_k\right], \quad \overline{d\left(e_i\right)}=\left[\bar{e}_i, \bar{y}_k\right], \quad 1 \leq i \leq s.
\end{equation}

From the first equality of \eqref{eq5.2} we get
$$\begin{gathered}
\sum_{m=1}^p b_{i, m} e_m+\sum_{m=1}^s c_{i, m} x_m-\sum_{m=1}^{q_k} \alpha_{k,m}\left[x_i, e_m\right]-\sum_{m=1}^s \beta_{k,m}[x_i, x_m]\in \mathcal{N}^k, \quad 1\leq i \leq s.
\end{gathered}$$

Then applying the condition $[\cal Q, \cal Q]=0$ in the last equality, we deduce

$$\begin{gathered}
\sum_{m=1}^p b_{i, m} e_m+\sum_{m=1}^s c_{i, m} x_m-\sum_{m=1}^{q_k} \alpha_{k,m}\left[x_i, e_m\right]\in \mathcal{N}^k, \quad 1\leq i \leq s.
\end{gathered}$$

Thanks to Theorem \ref{prop1.20}, we can assume $[x_i, e_j]=-\sum\limits_{m=j}^{q_k} \lambda_{j,m}^i e_m +\cal N^k.$ Therefore, from the above we derive that $d(x_i)\in \cal N$ and
\begin{equation} \label{eq5.3}
\sum_{m=1}^p \Big( b_{i, m} +\sum\limits_{j=1}^{m}\alpha_{k,j}\lambda_{j,m}^i\Big)e_m-\sum_{m=p+1}^{q_k}\Big(\sum\limits_{j=1}^{m} \alpha_{k,j}\lambda_{j,m}^i\Big)e_m=0.
\end{equation}

Now applying Corollary \ref{cor1.3} we derive that for any $m$ there exists $i$ such that $\lambda_{m,m}^i\neq 0.$ Therefore, from \eqref{eq5.3} we conclude that for any $m$ the coefficients $\alpha_{k,m}$ do not depend on $k$. Similarly from the second equality of \eqref{eq5.2} we derive that coefficients $\beta_{k,m}$ are also independent of $k$. Consequently, all coefficients $\alpha_{k,m}$ and $\beta_{k,m}$ appearing in the decomposition of $y_k$ are independent of $k$. This implies that $\bar{y}_k- \bar{y}_{k+1}\in \mathcal{N}^k$ for all $k\in\mathbb{N}$.

Since representation of $\bar{y}_k$ in the Hamel basis is independent of $k$, we can conclude $y:= y_k$ is well-defined for any $k$. Defining the subspaces  $\mathcal{V}_k = \operatorname{Span}\{\mathcal{N} \setminus \mathcal{N}^k, x_1, \dots, x_s\}$, we find that
$d(v)_{| \mathcal{V}_k} = {\operatorname{ad}_y(v)}_{| \mathcal{V}_k}$
holds for all $v \in \{e_i, x_i \mid 1 \leq i \leq s\}$. Finally, since  $\bigcup\limits_{k=1}^{\infty} \mathcal{V}_k = \mathcal{R}$, it follows that $d = \operatorname{ad}_y$.

For an arbitrary $z\in \operatorname{Center}(\cal R)$, we have  $\bar{z}_k\in \operatorname{Center}(\cal R_k)$ for any $k$. Due to \cite{Superall} the finite-dimensional Lie algebra $\cal R_k$ is isomorphic to $\cal N_k\rtimes \cal T_k$, which is centerless (see in \cite{Complete1}). Therefore, we conclude that $\bar{z}_k=\bar{0}$ for any $k$. If $z\neq 0$, then since $z$ in Hamel basis, for arbitrary large $k$ we can say  $z\notin  \cal V_k$. Thus, we get $z=0$.
\end{proof}

\begin{cor} Let $\cal R=\cal N\rtimes \cal T$ be a maximal solvable extension of pro-nilpotent Lie algebra $\cal N$ of maximal rank. Then $\operatorname{Center}(\cal R)=0$ and $\Der(\cal R)=\operatorname{InDer}(\cal R).$
\end{cor}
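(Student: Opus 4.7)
The plan is to reduce the corollary to an immediate application of Theorem~\ref{thm5.2}, so the work is entirely in verifying that the hypotheses are satisfied. Specifically, I want to check three things: (a)~$\mathcal{R}=\mathcal{N}\rtimes\mathcal{T}$ is an element of $\mathcal{M}_{\mathcal{N}}$; (b)~the complementary subspace $\mathcal{T}$ is abelian; and (c)~$\mathcal{R}$ is maximal among residually solvable extensions of $\mathcal{N}$.

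First, (b) is built into the definition of a torus: $\mathcal{T}$ is a commutative subalgebra of $\operatorname{Der}(\mathcal{N})$, so inside $\mathcal{R}$ the bracket $[t,t']=0$ for $t,t'\in\mathcal{T}$, i.e.\ $\mathcal{T}$ is abelian. For (a), I follow Example~\ref{exam3.11}: since $[\mathcal{T},\mathcal{T}]=0$ and $[\mathcal{N},\mathcal{T}]\subseteq\mathcal{N}$, one has $\mathcal{R}^2\subseteq\mathcal{N}$. Hence $\mathcal{R}^2$ is an ideal of the pro-nilpotent algebra $\mathcal{N}$, and by inspection of the lower central filtration it is itself pro-nilpotent (or one may cite directly that $\mathcal{R}^{[i+1]}\subseteq\mathcal{N}^{[i]}\subseteq\mathcal{N}^{2^{i-1}}$ gives residual solvability of $\mathcal{R}$). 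Moreover $\dim(\mathcal{R}/\mathcal{R}^2)\leq\dim(\mathcal{N}/\mathcal{N}^2)+\dim\mathcal{T}<\infty$. Proposition~\ref{prop5.1}(ii) then furnishes a pro-nilpotent radical of $\mathcal{R}$, which must contain the pro-nilpotent ideal $\mathcal{N}$; since any strictly larger ideal would have nontrivial intersection with $\mathcal{T}$ whose elements act diagonalizably (hence nontrivially) and so prevent residual nilpotency, the radical is exactly $\mathcal{N}$. Thus $\mathcal{R}\in\mathcal{M}_{\mathcal{N}}$.

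Next, (c)~the maximality step uses the maximal-rank hypothesis. By definition, $\rank(\mathcal{N})=\dim(\mathcal{N}/\mathcal{N}^2)$, and by construction $\dim\mathcal{T}=\rank(\mathcal{N})$. Theorem~\ref{thm3.17} gives, for any $\mathcal{R}'\in\mathcal{M}_{\mathcal{N}}$, the inequality
\[
\dim(\mathcal{R}'/\mathcal{N})\;\leq\;\dim(\mathcal{N}/\mathcal{N}^2)\;=\;\dim\mathcal{T}\;=\;\dim(\mathcal{R}/\mathcal{N}),
\]
so $\mathcal{R}$ attains the maximum and is therefore a maximal residually solvable extension in the sense of the preceding definition.

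With (a), (b), (c) in hand, Theorem~\ref{thm5.2} applies verbatim to $\mathcal{R}=\mathcal{N}\rtimes\mathcal{T}$ and yields $\operatorname{Center}(\mathcal{R})=0$ and $\operatorname{Der}(\mathcal{R})=\operatorname{InDer}(\mathcal{R})$. The step I expect to need the most care is the identification of $\mathcal{N}$ as the pro-nilpotent radical of $\mathcal{R}$ (not merely a maximal pro-nilpotent ideal), because a priori the radical in Proposition~\ref{prop5.1}(ii) is only asserted to exist; ruling out a strictly larger pro-nilpotent ideal requires using the fact that the diagonal action of any nonzero $t\in\mathcal{T}$ on a generating subspace of $\mathcal{N}$ is nontrivial, which is guaranteed precisely because $\mathcal{N}$ has maximal rank. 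Everything else is bookkeeping.
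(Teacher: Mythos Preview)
Your approach is correct and matches the paper's intent: the corollary is meant as a direct application of Theorem~\ref{thm5.2} once the hypotheses are verified, and your items (b) and (c) are handled cleanly.

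There is, however, one soft spot in your verification of (a). You write that ``$\mathcal{R}^2$ is an ideal of the pro-nilpotent algebra $\mathcal{N}$, and by inspection of the lower central filtration it is itself pro-nilpotent,'' and then invoke Proposition~\ref{prop5.1}(ii). But the paper explicitly remarks that ideals of pro-nilpotent Lie algebras need not be pro-nilpotent, so this step is not automatic and Proposition~\ref{prop5.1}(ii) cannot be applied without further work. The clean fix is to bypass that proposition entirely: since $\mathcal{N}$ is already a pro-nilpotent ideal of $\mathcal{R}$, to show it is the radical you only need every pro-nilpotent ideal $\mathcal{I}$ of $\mathcal{R}$ to lie in $\mathcal{N}$. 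If some $\mathcal{I}\not\subseteq\mathcal{N}$, then $\mathcal{I}+\mathcal{N}$ is pro-nilpotent by Lemma~\ref{lem:pro-nilpotent-sum} and strictly contains $\mathcal{N}$; as you correctly observe, subtracting the $\mathcal{N}$-component produces a nonzero $t\in\mathcal{T}\cap(\mathcal{I}+\mathcal{N})$, and any eigenvector $e\in\mathcal{N}$ with $t(e)=\lambda e$, $\lambda\neq 0$, then lies in $(\mathcal{I}+\mathcal{N})^k$ for all $k$, contradicting residual nilpotency.

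One further small correction: your closing remark misplaces the role of the maximal-rank hypothesis. It is not needed to identify $\mathcal{N}$ as the radical, since any nonzero $t\in\mathcal{T}\subseteq\operatorname{Der}(\mathcal{N})$ automatically acts nontrivially on $\mathcal{N}$; maximal rank enters only in step~(c), through Theorem~\ref{thm3.17}.
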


Note that in the proof of Theorem \ref{thm5.2} we have used the property $d(\cal N) \subseteq \cal N$ for any $d\in \operatorname{Der}(\cal R).$ The next example shows that this result, which is true in finite-dimensional case, is not true for residually solvable extensions of pro-nilpotent Lie algebras, in general.
\begin{exam} \cite{Residual1} Consider a Lie algebra $\mathcal{R}$  with a basis $\{e_i, x \ | \ i\in \mathbb{N}\}$ whose multiplications table is defined on basis elements by:
$$[e_1,e_i]=e_{i+1}, \quad i\geq2, \quad [x, e_i]=e_i, \quad i\geq2.$$
Then $\cal R$ is residually solvable Lie algebra with pro-nilpotent radical $\rm m_1$ and the map $d$, defined by
$$d(e_1)=x\quad d(e_i)=(i-2)e_{i-1},\ i\geq 2, \quad d(x)=0,$$
is a derivation of $\cal R$ such that $d(\rm m_1)\nsubseteq  \rm m_1.$
\end{exam}

\section{More examples of pro-nilpotent and pro-solvable Lie algebras.}

For a given pro-nilpotent Lie algebra $\cal N$ and associative-commutative algebra $\cal A$, we consider current algebra $\widetilde{\cal N}=\cal N \otimes \cal A$ with the  product:
$$[x \otimes a, y \otimes b]= [x,y] \otimes ab, \quad x,y \in \cal N, a,b \in \cal A.$$

Then for this Lie algebra, we have $\bigcap\limits_{i=1} ^{\infty}\widetilde{\cal N}^{i}=
\bigcap\limits_{i=1} ^{\infty}\cal N^{i}\otimes \bigcap\limits_{i=1} ^{\infty}\cal A^{i}=0.$ Therefore, the current algebra $\widetilde{\cal N}$, constructed from a residually nilpotent Lie algebra  $\cal N$, is itself residually nilpotent.

Moreover, if $\dim (\cal A^i /\cal A^{i+1}) < \infty$ for infinite-dimensional algebra $\cal A$, then
$$\dim (\widetilde{\cal N}^i /\widetilde{\cal N}^{i+1})=
\dim \big(\cal N^i/ \cal N^{i+1}\big) \cdot \dim \big(\cal A^{i}/ \cal A^{i+1}\big) < \infty.$$
It implies that $\widetilde{\cal N}$ is pro-nilpotent Lie algebra.

\begin{exam} For a given pro-nilpotent Lie algebra $\cal N$ we consider subalgebra of the current algebra $\widetilde{\cal N}=\cal N\otimes t\mathbb{C}[t]$ with the product
$[x\otimes t^n, y\otimes t^m]=[x,y]\otimes t^{n+m}.$ Then $\cal N$ is pro-nilpotent Lie algebra.
\end{exam}

It should be noted that similar constructions can be considered for residually solvable (pro-solvable) Lie algebras. Namely, the tensor product $\widetilde{\cal R}=\cal R \otimes \cal A$ with a pro-solvable Lie algebra $\cal R$ and with $\dim (\cal A^i /\cal A^{i+1}) < \infty$ is pro-solvable algebra.

For given pro-nilpotent Lie algebras $\cal N_1$ and $\cal N_2$ for the direct sum $\cal N=\cal N_1 \oplus \cal N_2$ (with component-wise product) we have
$$\bigcap\limits_{i=1} ^{\infty}\cal N^{i}=
\bigcap\limits_{i=1} ^{\infty}\cal N_1^{i}\oplus \bigcap\limits_{i=1} ^{\infty}\cal N_2^{i}=0, \quad \dim (\cal N^i /\cal N^{i+1})=
\dim \big(\cal N_1^i/ \cal N_1^{i+1}\big) + \dim \big(\cal N_2^{i} / \cal N_2^{i+1}\big) < \infty,$$
from we conclude that the direct sum of finitely many pro-nilpotent Lie algebras is pro-nilpotent Lie algebra.

\begin{exam}
Let $\mathcal{N}$ be a pro-nilpotent Lie algebra. Consider the current algebra
\[
\widetilde{\mathcal{N}}=\mathcal{N}\otimes t\mathbb{C}[t]
\]
with the Lie product defined on homogeneous tensors by
\[
[x\otimes t^n,\, y\otimes t^m]=[x,y]\otimes t^{n+m}, \qquad x,y\in \mathcal{N}, \ n,m\in\mathbb{N}.
\]
Then $\widetilde{\mathcal{N}}$ is again a pro-nilpotent Lie algebra.
\end{exam}

Let $(\mathcal{A},\cdot_1)$ and $(\mathcal{B},\cdot_2)$ be associative commutative algebras equipped with pairs of commuting derivations $d_1,d_2\in \operatorname{Der}(\mathcal{A}), \ \bar d_1,\bar d_2\in \operatorname{Der}(\mathcal{B}), \ [d_1,d_2]=[\bar d_1,\bar d_2]=0.$
Define Lie brackets on $\mathcal{A}$ and $\mathcal{B}$ by
$$[a_1,a_2]_1 = d_1(a_1)d_2(a_2) - d_1(a_2)d_2(a_1), \quad [b_1,b_2]_2 = \bar d_1(b_1)\bar d_2(b_2) - \bar d_1(b_2)\bar d_2(b_1).$$
Then both $(\mathcal{A},[\cdot,\cdot]_1)$ and $(\mathcal{B},[\cdot,\cdot]_2)$ are Lie algebras.

Moreover, if both $(\mathcal{A},[\cdot,\cdot]_1)$ and $(\mathcal{B},[\cdot,\cdot]_2)$ are residually nilpotent (respectively, pro-nilpotent), then $\mathcal{A}\otimes\mathcal{B}$ is residually nilpotent (respectively, pro-nilpotent) as well.

\begin{exam} Consider the algebras $\cal A=x_1^{r_1}x_2^{r_2}\mathbb{C}[x_1, x_2]$ and $\cal B=y_1^{q_1}y_2^{q_2}\mathbb{C}[y_1, y_2]$, where $r_i, q_i\in \mathbb{N}$, equipped with partial derivations. Then $\cal A\otimes \cal B$ is pro-nilpotent Lie algebra.
\end{exam}

\textit{Central extensions of pro-nilpotent Lie algebras.}

Let $\cal N$ be a pro-nilpotent Lie algebra over $\mathbb{F}$ and $\cal V$ a vector space over $\mathbb{F}$. Then the bilinear maps
$\theta : \ \cal N \wedge \cal N \to \cal V$  with
$$\theta([x, y], z) + \theta([z, x], y) + \theta([y, z], x) = 0 \quad \text{for all } x, y, z \in \cal N$$
are called {\it $2$-cocycles}. The set of all $2$-cocycles is denoted $Z^2(\cal N, \cal V)$.

For a given $\theta \in Z^2(\cal N, \cal V)$ we set $\cal N_{\theta} = \cal N \oplus \cal V$ and define a bracket $[ - , - ]$ on $\cal N_{\theta}$ as follows
$$[x + v, y + w] = [x, y]_{\cal N} + \theta(x, y),
$$
where $[ - , - ]_{\cal N}$ is the bracket on $\cal N$. Then $\cal N_{\theta}$ becomes a Lie algebra, called the {\it central extension of $\cal N$ by $\theta$}. It is clear that if dimension of $\cal V$ is finite, then
the algebra $\cal N_{\theta}$ is pro-nilpotent Lie algebra, otherwise, $\cal N_{\theta}$ is residually nilpotent.

The short discussions below on central extensions of finite-dimensional nilpotent Lie algebras due to $dim \cal V < \infty $ are still valid for pro-nilpotent Lie algebra $\cal N$ (even it is centerless).

For a linear map $v :  \ \cal N \to \cal V$ we define $\eta(x, y) = v([x, y])$. Then $\eta$ is a $2$-cocycle, called a {\it coboundary}. The set of all $2$-coboundaries is denoted by $B^2(\cal N, \cal V)$. For a $2$-coboundary $\eta$ we have $\cal N_{\theta} \cong \cal N_{\theta + \eta}$.
Consider the set
$$H^2(L, \cal V) = Z^2(L, \cal V)/B^2(L, \cal V).$$

For a Lie algebra $\cal M$ with $Center(\cal M)\neq 0$, we set $\cal V = Center(\cal M)$, and $\cal N = \cal M/Center(\cal M)$.

When constructing nilpotent Lie algebras as $\cal N_{\theta} = \cal N \oplus \cal V$, we want to restrict $\theta$ such that $Center(\cal N_{\theta}) = \cal V$.
For $\theta \in Z^2(\cal N, \cal V)$ we set
$\theta^\perp = \{x \in \cal N \mid \theta(x, y) = 0 \text{ for all } y \in \cal N\}.$
Then $Center(\cal N_{\theta}) = (\theta^\perp \cap Center(\cal N)) + \cal V$.
Similarly to finite-dimensional case for $\theta \in Z^2(\cal N, \cal V)$, we have that $\theta^\perp \cap Center(\cal N) = 0 $ if and only if $Center(\cal N_{\theta}) = \cal V$.

Let $\{e_1, \ldots, e_s\}$ be a basis of $\cal V$, and $\theta \in Z^2(\cal N, \cal V)$. Then $\theta(x, y) = \sum\limits_{i=1}^s \theta_i(x, y)e_i,$
where $\theta_i \in Z^2(\cal N, \mathbb{F})$. Furthermore, $\theta$ is a coboundary if and only if all $\theta_i$ are.

For more detailed description of central extensions of Lie algebras we refer reader to \cite{Graaf}.

\end{document}